\documentclass[a4paper, 11pt, parskip=half]{amsart}

\usepackage{custom-template, shortcuts}

\begin{document}
	\title{On the Lang--Trotter conjecture for Siegel modular forms}
	\date{}
	\author{Arvind Kumar, Moni Kumari and Ariel Weiss}
	\address{Arvind Kumar, Department of Mathematics, Indian Institute of Technology Jammu, Jagti, PO Nagrota, NH-44 Jammu 181221, J \& K, India\vspace*{-5pt}}
	\email{arvind.kumar@iitjammu.ac.in\vspace*{-6pt}}
	\address{Moni Kumari, Department of Mathematics, Indian Institute of Technology Jammu, Jagti, PO Nagrota, NH-44 Jammu 181221, J \& K, India\vspace*{-5pt}}
	\email{moni.kumari@iitjammu.ac.in\vspace*{-6pt}}
	\address{Ariel Weiss, Department of Mathematics, The Ohio State University, Columbus, Ohio, USA\\Department of Mathematics, Ben-Gurion University of the Negev, Be'er Sheva 8410501, Israel.\vspace*{-5pt}}
    \email{weiss.742@osu.edu\vspace*{-6pt}}

	\subjclass[2020]{11F80, 11F46, 11R45}
	\keywords{Siegel modular forms, Images of Galois representations, Lang--Trotter conjecture}
	
	\begin{abstract}
        Let $f$ be a genus two cuspidal Siegel eigenform. We prove an adelic open image theorem for the compatible system of Galois representations associated to $f$, generalising the results of Ribet and Momose for elliptic modular forms. Using this result, we investigate the distribution of the Hecke eigenvalues $a_p$ of $f$, and obtain upper bounds for the sizes of the sets $\{p \le x : a_p = a\}$ for fixed $a\iC$, in the spirit of the Lang--Trotter conjecture for elliptic curves.
	\end{abstract}
	\maketitle

\vspace{-.6cm}
    
	\section{Introduction}
	If $A$ is a non-CM elliptic curve over $\Q$ of conductor $N$ and if $a\iZ$, then the \emph{Lang--Trotter conjecture} \cite{lang-trotter} states that
		\begin{equation*}\label{eq:lt-conjecture}
\pi_A(x, a):= \#\set{p\le x,\ p\nmid N : a_p = a} \sim C(A, a)\frac{ x^{1/2}}{\log x},
		\end{equation*}
where $a_p = p+1 - \#A(\Fp)$ and $C(A, a)\ge 0$ is  an explicit constant.
The conjecture is formulated in terms of the two-dimensional compatible system of Galois representations attached to $A$. More generally, if $(\rho_\l)_\l$ is an arbitrary compatible system of Galois representations of conductor $N$, whose Frobenius polynomials are defined over a number field $E$, then for each $a\in\O_E$, it is natural to ask for the asymptotics of the size of the set
	\begin{equation}\label{eq:lt-set}
	    \set{p\le x,\ p\nmid \l N : \tr\rho_\l(\Frob_p) = a}
	\end{equation}
	where $\Frob_p$ denotes a Frobenius element of $\Gal(\Qpb/\Qp)$. 
When $(\rho_\l)_\l$ is self-dual up to twist, a generalisation of the Lang--Trotter conjecture has been formulated by V.\ K.\ Murty \cite{murty-Frobenius}*{Conj.\ 2.15}, under mild assumptions.   
	 
	The goal of this paper is to estimate the size of the set $(\ref{eq:lt-set})$ when $(\rho_\l)_\l$ is the compatible system of Galois representations attached to a genus two Siegel modular eigenform. 
	
	Let $f$ be a cuspidal vector-valued genus two Siegel modular eigenform of weight $(k_1, k_2)$---i.e.\ weight $\Sym^{k_1-k_2}\det^{k_2}$---with $k_1\ge k_2\ge 2$ and level $N$. Let $\pi$ be the cuspidal automorphic representation of $\Gf(\A_\Q)$ associated to $f$, and assume that its functorial lift to $\GL_4(\A_\Q)$ exists, is cuspidal, and is neither an automorphic induction nor a symmetric cube lift. Let $\varepsilon$ be the central character of $\pi$. Let $E = \Q(\{a_p : p\nmid N\}, \varepsilon)$ be the number field generated by the image of $\varepsilon$ and by the eigenvalues $a_p$ of the Hecke operators $T_p$. Let $F\sub E$ be the field fixed by the inner twists of $f$ (see \Cref{def: twists-intro} and \Cref{sec:inner-twists}).  
	
	\begin{theorem}\label{thm:unconditional-lta} 
	    For $a\in \O_F$, let
	    \[\pi_f(x, a) := \#\set{p\le x : a_p = a}.\]
	    Then, for any $\epsilon>0$,
	    \[\pi_f(x, a) \ll_{\epsilon,f}\frac{x}{(\log x)^{1+\alpha-\epsilon}},\quad\alpha = \frac{[F:\Q]}{10[F:\Q]+1}.\]
	    If $a = 0$, then
	     \[\pi_f(x, 0) \ll_{\epsilon,f} \frac{x}{(\log x)^{1+\alpha-\epsilon}},\quad\alpha = \frac{[F:\Q]}{7[F:\Q]+1}.\]
	\end{theorem}
	
	If we further assume the Generalised Riemann Hypothesis (GRH), we obtain the following strengthened result:
	
	\begin{theorem}\label{thm:conditional-lta}
		Let $f$ be as above, and assume GRH. If $a\in \O_F$, then
		\[\pi_f(x, a) \ll_{f}\frac{x^{1-\alpha}}{(\log x)^{1-2\alpha}},\quad\alpha= \frac{[F:\Q]}{11[F:\Q]+1}.\]
		If $a = 0$, then
	     \[\pi_f(x, 0) \ll_{f}\frac{x^{1-\alpha}}{(\log x)^{1-2\alpha}},\quad\alpha= \frac{[F:\Q]}{10[F:\Q]+1}.\]
	\end{theorem}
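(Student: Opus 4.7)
The plan is the classical Serre strategy, adapted to the Siegel setting via the adelic open image theorem of the preceding sections. Fix an auxiliary prime $\ell$, coprime to $aN$ and to be optimised later in $x$. The condition $a_p = a$ implies the congruence $\tr\rho_\ell(\Frob_p)\equiv a \pmod{\ell\O_F}$, so
\[\pi_f(x, a) \le N_\ell(x, a) := \#\set{p\le x : \tr\rho_\ell(\Frob_p)\equiv a\pmod{\ell\O_F}}.\]
The right-hand side is a Chebotarev count for the finite Galois extension $L_\ell/\Q$ cut out by $\bar\rho_\ell$ (suitably adjusted for inner twists), and my task reduces to bounding it.

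Next I would apply the open image theorem: for $\ell$ sufficiently large, $G_\ell := \Gal(L_\ell/\Q)$ contains an open subgroup of $\GSp_4(\O_F/\ell)$ of uniformly bounded index. Since $\dim\GSp_4 = 11$, this gives $|G_\ell|\asymp \ell^{11[F:\Q]}$, while imposing one trace equation carves out a conjugation-invariant subset $C_\ell\sub G_\ell$ of relative density $|C_\ell|/|G_\ell|\asymp \ell^{-[F:\Q]}$.

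The crucial third step is a quantitative Chebotarev density theorem under GRH. The crude Lagarias--Odlyzko bound would incur an error involving $\log d_{L_\ell}$, which is exponential in $\ell$ and useless here; the Murty--Murty--Saradha refinement accesses Chebotarev through cuspidal automorphic $L$-functions on $\GL_n$, whose conductors are polynomial in $\ell$. The hypotheses of \Cref{thm:unconditional-lta} ensure the cuspidality of the $\GL_4$ lift of $f$ and of its relevant symmetric powers, which is precisely what is needed to invoke this refinement. It yields
\[N_\ell(x, a) \ll \frac{|C_\ell|}{|G_\ell|}\cdot\frac{x}{\log x} + |C_\ell|^{1/2}\,x^{1/2}\log(\ell N x).\]
Substituting the asymptotics above and optimising $\ell$ by Bertrand's postulate so that the two terms balance near $\ell^{11[F:\Q]+1}\asymp x^{1/2}$, then carefully tracking the logarithmic factors, produces the stated exponent $\alpha = [F:\Q]/(11[F:\Q]+1)$ and the denominator $(\log x)^{1-2\alpha}$. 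The improvement for $a = 0$ comes from an extra algebraic constraint: the identity $a_p = 0$ imposes one additional relation on the characteristic polynomial of $\rho_\ell(\Frob_p)$ (via the link between $a_p$ and the secondary Hecke eigenvalue in the spinor $L$-function), raising the codimension of $C_\ell$ by one and shrinking $|C_\ell|$ by a further factor $\ell^{[F:\Q]}$; re-optimising then yields $\alpha = [F:\Q]/(10[F:\Q]+1)$.

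The main technical obstacle is the Chebotarev step: making the Murty--Murty--Saradha estimate apply requires verifying that the $\GL_4$ lift and the symmetric powers entering the argument are cuspidal automorphic, so that the standard analytic input (polynomial conductors, zero-free regions under GRH) is available. The interaction with inner twists also needs care---it is the inner twist field $F$, and not the full coefficient field $E$, that governs the size of $G_\ell$ and thus dictates the $[F:\Q]$ appearing in $\alpha$.
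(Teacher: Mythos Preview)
Your proposal has a genuine gap at the Chebotarev step. The bound
\[N_\ell(x, a) \ll \frac{|C_\ell|}{|G_\ell|}\cdot\frac{x}{\log x} + |C_\ell|^{1/2}\,x^{1/2}\log(\ell N x)\]
is Zywina's refinement (\Cref{zmain} in the paper), and it requires the extension to be \emph{abelian}: it depends on Artin's holomorphicity conjecture, known only in that case. Applied directly to $L_\ell/\Q$ with Galois group essentially $\G(\F_\ell)$, it is simply not available; the only unconditional bound for nonabelian extensions is Lagarias--Odlyzko (\Cref{thm:conditional-cdt}), whose error term carries a factor $[L_\ell:\Q]\asymp \ell^{10n+1}$ and destroys the argument. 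Your appeal to ``cuspidal automorphic $L$-functions on $\GL_n$'' and ``symmetric powers'' mischaracterises the Murty--Murty--Saradha method: that paper does not prove a nonabelian Chebotarev bound via automorphy. What it does---and what the present paper does---is a two-step reduction to an abelian extension. First, one restricts to primes $\ell$ splitting completely in the splitting field $F(p)$ of the Frobenius characteristic polynomial (\Cref{lem:murty-bound}), so that $\worho_\ell(\Frob_p)$ has all eigenvalues in $\F_\ell^\times$ and is therefore conjugate into the Borel $\mathcal B_\ell$. Second, one uses \Cref{zywina2} to pass to the abelian quotient $\mathcal B_\ell/\mathcal U_\ell$, and only then applies Zywina's bound. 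The resulting error term involves $|\overline{\CC}(a,\ell)|^{1/2}\cdot[L^{\mathcal B_\ell}:\Q]\asymp \ell^{(9n+1)/2}$, which after balancing yields $\alpha = n/(11n+1)$.

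Your explanation of the $a=0$ improvement is also incorrect. The condition $a_p=0$ is still a single trace equation; it does not raise the codimension of $C_\ell$. The genuine gain is that the trace-zero locus is stable under multiplication by scalar matrices, so in the Borel reduction one can quotient by the larger subgroup $\mathcal H_\ell$ of upper-triangular matrices with all four eigenvalues equal (\Cref{card2}), rather than just by $\mathcal U_\ell$. This shrinks $|\overline{\CC}(0,\ell)|$ from $\asymp\ell^{n+1}$ to $\asymp\ell$, lowering the error term to $\ell^{(8n+1)/2}$ and producing $\alpha = n/(10n+1)$.
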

	
\begin{remark}\label{rem:arthur}
 The assumption that the functorial lift of $\pi$ is cuspidal and not an automorphic induction or a symmetric cube lift is equivalent to demanding that $f$ is of type \textbf{(G)} in the notation of \cite{schmidt}, does not have real multiplication (RM) or complex multiplication (CM), and is not a symmetric cube lift. In these cases, stronger versions of \Cref{thm:unconditional-lta} and \ref{thm:conditional-lta} follow from the analogous results for modular forms.
 
 The existence of this functorial lift follows from work of Weissauer \cite{Weissauersymplectic} and Asgari--Shahidi \cite{asgari-shahidi} when $k_2 >2$. When $k_2 = 2$, the existence of this lift follows from Arthur's classification \cite{Arthur2013, geetaibi, arthur-plug}. However, this result is still not truly unconditional, and relies on several unpublished results that have yet to appear. Hence, we impose the existence of this lift as an assumption. We refer the reader to \cite{weiss2018image}*{Sec.\ 2.4} and \cite{calegari-blog} for further discussion. 
\end{remark}	
   
	The Lang--Trotter conjecture for elliptic curves was first investigated by Serre \cite{ser81}, who showed that $\pi_A(x, a) \ll \frac{x}{(\log x)^{5/4-\epsilon}}$ unconditionally, and that $\pi_A(x, a) \ll x^{7/8}(\log x)^{1/2}$ under GRH. These bounds were subsequently improved by Wan \cite{Wan} and V.\ K.\ Murty \cite{murty-mod-forms-ii} unconditionally, and by Murty--Murty--Saradha under GRH \cite{mms}. The best current estimates when $a\ne 0$ are $\pi_A(x, a) \ll \frac{x(\log\log x)^2}{(\log x)^{2}}$ unconditionally \cite{thorner-zaman}, and $\pi_A(x, a) \ll \frac{x^{4/5}}{(\log x)^{3/5}}$ under GRH \cite{zyw}.
	
	These results all apply verbatim to non-CM elliptic modular forms of weight $\ge 2$ with integer Hecke eigenvalues, and can easily be adapted to give bounds when the field $E$ generated by the Hecke eigenvalues is arbitrary (see, for example, \cite{ser81}*{Sec.\ 7}). The bounds obtained in this way are independent of the field $E$ and its subfield $F$. A novel feature of our result is that our bounds improve as the degree $[F:\Q]$ increases.
	
	The higher dimensional case has been studied by Cojocaru--Davis--Silverberg--Stange \cite{cojocaru-abelian-varieties} (see also \cite{serban, cojocaru-wang, cojocaru2022bounds}), who formulate a precise conjecture for generic abelian varieties, and prove analogues of Theorems \ref{thm:unconditional-lta} and \ref{thm:conditional-lta}. If $A$ is an abelian surface over $\Q$ with $\End(A)=\Z$, then, conjecturally, for each prime $\l$, the $\l$-adic Galois representation attached to $A$ should be isomorphic to the $\l$-adic Galois representation attached to a Siegel modular form $f$ of weight $(2,2)$, paramodular level and integer Hecke eigenvalues. For such a Siegel modular form $f$, our unconditional bound in \Cref{thm:unconditional-lta} exactly matches that of \cite{cojocaru-abelian-varieties}*{Thm.\ 1} (see also \Cref{rem:strengthened-result}). Our conditional bound of $O(x^{11/12}(\log x)^{-5/6})$ in \Cref{thm:conditional-lta} agrees with that of \cite{cojocaru2022bounds} when $g =2$ and $F=\Q$, and is slightly stronger than the bound $O(x^{21/22+\epsilon})$ of \cite{cojocaru-abelian-varieties}. The authors of \cite{cojocaru-abelian-varieties} also formulate a precise conjecture for the asymptotics of $\pi_A(x, a)$, including the constant $C(A, a)$. It would be interesting to formulate such a conjecture in the case of Siegel modular forms, particularly when $F\ne \Q$, however, we do not pursue that here.
	
	The proofs of the above results all use the strategy initiated by Serre \cite{ser81}, which combines explicit versions of the Chebotarev density theorem with precise calculations of the images of Galois representations attached to modular forms and abelian varieties \cite{serre-abelian, Ribet75, momose, Ribet85}. While our approach is inspired by this framework, a key obstacle in the case of Siegel modular forms is that these image results are not available. The key technical input of this paper is a precise big image theorem for Galois representations attached to Siegel modular forms.

	\subsection{Images of Galois representations}
	
	There is a general philosophy that the image of an automorphic Galois representation should be as large as possible, unless there is an automorphic reason for it to be small. For example, let $f$ be a Siegel modular eigenform as in the previous section. For each prime $\lambda$ of $E$, there is a $\lambda$-adic Galois representation
	\[\rho_{\lambda}\:\Ga\Q\to \Gf(\overline{E}_\lambda)\]
	associated to $f$. If $f$ is not of type \textbf{(G)} in the notation of \cite{schmidt}, then $\rho_{\lambda}$ is reducible for all $\lambda$. Similarly, the image of $\rho_{\lambda}$ is small if $f$ has complex or real multiplication, or is a symmetric cube lift. 
	
	In \cite{weiss2018image}, building on previous work of Dieulefait and Dieulefait--Zenteno \cites{Dieulefait2002maximalimages, DZ}, the third author showed that if $f$ is not in one of these exceptional cases, then the image of $\rho_\lambda$ is large in the following sense: for all but finitely many primes $\lambda$ (or for all $\lambda\mid\l$ for a set of primes $\l$ of density $1$ if $k_2 = 2$), the image of the residual representation $\orho_{\lambda}\:\Ga\Q\to \Gf(\O_E/\lambda)$ contains $\Sp_4(\Fl)$. 
	
	However, in order to prove Theorems \ref{thm:unconditional-lta} and \ref{thm:conditional-lta}, we need to pin down the exact image $\orho_{\lambda}$ for almost all primes. This study is complicated by an additional symmetry, that of \emph{inner twists}, first described by Ribet \cite{Ribet77,Ribet-twists} for elliptic modular forms. Fix an embedding $E\hookrightarrow\C$.
	
	\begin{definition}\label{def: twists-intro}
	    An \emph{inner twist} of $f$ is a pair $(\sigma, \chi_\sigma)$, where $\sigma\in\Hom(E, \C)$ and $\chi_\sigma$ is a Dirichlet character, such that $\sigma(a_p) = \chi_\sigma(p)a_p$ for almost all primes $p$.
	\end{definition}
	
	We show in \Cref{sec:inner-twists} that the set of such $\sigma\in \Hom(E,\C)$ forms an abelian subgroup $\Gamma$ of $\Aut(E/\Q)$. Let $F = E^\Gamma$ be its corresponding fixed field. For each $\sigma\in\Gamma$, let $K_\sigma$ be the number field cut out by $\chi_\sigma$, and let $K$ be the compositum of all the $K_\sigma$'s. If $p$ splits in $K$, we have $\sigma(a_p) = a_p$ for all $\sigma\in\Gal(E/F)$, i.e.\ $a_p\in F\subsetneq E$. Thus, inner twists give a restriction on the image of $\rho_{\lambda}$: the Frobenius elements associated to a positive density of primes $p$ have trace contained in the proper subfield $F$ of $E$. Let $\G=\G_f$ be the group scheme over $\Z$ such that, for each $\Z$-algebra $R$, we have
	\begin{equation*}
	\G(R) = \set{(g, \nu)\in \Gf(\O_F\tensor_\Z R)\times R\t : \simil(g) = \nu^{k_1 + k_2 - 3}}.    
	\end{equation*}
		Here, $\simil\:\Gf\to \GL_1$ is the similitude character. Let $\rho_{\l}:= \bigoplus_{\lambda\mid\l}\rho_{\lambda}$. We show in \Cref{lem:small-trace} that, for almost all primes $\l$, the restriction
	\[\rho_{\l}|_K\:\Gal(\Qb/K)\to \Gf(E\tensor_\Q\Qlb)\]
	factors through $\Gf(F\tensor_\Q\Ql)$, and can be viewed as a representation
	\[\wrho_{ \l}\:\Gal(\Qb/K)\to \G(\Ql)\]
	such that the projection to $\Gf(\O_F\tensor_\Z\Ql)$ is $\rho_{\l}|_K$ and the projection to $\Ql\t$ is the cyclotomic character. Moreover, up to conjugation, we can assume that $\wrho_{\l}$ takes values in $\G(\Zl)$. If $\LL$ is a set of rational primes, let $\widehat{\Q}_\LL = \Q\tensor_\Z\prod_{\l\in\LL}\Zl$ and let
	\[\wrho_{ \LL}:= \bigoplus_{\l\in\LL}\wrho_{ \l}\:\Gal(\Qb/K)\to \G(\widehat\Q_\LL)\]
	be the associated adelic Galois representation.
	
	Our main technical result is the following determination of the images of these Galois representations, which generalises the results of Serre for elliptic curves \cite{serre-abelian}
    and of Ribet, Momose and Loeffler for elliptic modular forms \cites{Ribet75, Ribet77,momose,Ribet85, Loeffler-adelic}.
    
	\begin{theorem}\label{thm:precise-image}
	    Let $f$ be a cuspidal vector-valued Siegel modular eigenform of weight $(k_1, k_2)$ and level $N$. Define $E, F$ and $\G$ as above. Let $\pi$ be the cuspidal automorphic representation of $\Gf(\A_\Q)$ associated to $f$ and assume that its functorial lift to $\GL_4(\A_\Q)$ exists, is cuspidal, and is neither an automorphic induction nor a symmetric cube lift. Let $\varepsilon$ be its central character.
	    
	    Let $\LL'$ be the set of rational primes $\l$ such that $\l\ge 5$, such that $\rho_{\l}|_{\Ql}$ is de Rham and such that $\rho_{\l}|_{\Ql}$ is crystalline if $\l\nmid N$.\footnote{If $k_2>2$, then $\LL'$ consists of all primes $\l\ge 5$. If $k_2 = 2$, then $\LL'$ has Dirichlet density $1$ by \cite{weiss2018image}*{Thm.\ 1.1}.} Let $\LL\sub \LL'$ be the $($cofinite$)$ subset of primes such that $\rho_{\l}|_K$ takes values in $\Gf(F\tensor_\Q\Ql)$. Then:
	
			\begin{enumerate}
			\item For each prime $\l\in\LL$, the image of $\wrho_{\l}$ is an open subgroup of $\G(\Zl)$.
			\item For all but finitely many primes $\l\in\LL$, the image of $\wrho_{\l}$ is exactly $\G(\Zl)$.
			\item The image of $\wrho_{\LL}$ is an open subgroup of $\G(\widehat{\Q}_\LL)$.
		\end{enumerate}
	\end{theorem}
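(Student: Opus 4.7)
The plan is to adapt the strategy of Serre, Ribet, Momose, and Loeffler for elliptic curves and elliptic modular forms \cites{serre-abelian,Ribet75,Ribet77,momose,Ribet85,Loeffler-adelic} to the Siegel setting. The proof will proceed in three steps---residual surjectivity, $\l$-adic lifting, and adelic gluing---and the main new difficulty beyond the residual big image theorem of \cite{weiss2018image} is to control not each $\rho_\lambda$ in isolation but how the family assembles through the inner-twist field $F$ and the group scheme $\G$.

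First I would show that, for all but finitely many $\l \in \LL$, the reduction of $\wrho_\l$ modulo $\l$ surjects onto $\G(\Fl)$. The input is the big image theorem of \cite{weiss2018image}, which gives $\Sp_4(\Fl) \sub \mathrm{image}(\orho_\lambda)$ for almost every prime $\lambda$ of $E$. Viewing $\G(\Fl)$ as the subgroup of $\prod_{\mathfrak{l}\mid\l}\Gf(k_\mathfrak{l})$ cut out by the common similitude condition, Goursat's lemma reduces surjectivity onto $\G(\Fl)$ to the absence of spurious mod-$\l$ isomorphisms $\orho_{\mathfrak{l}}|_K \cong \orho_{\mathfrak{l}'}|_K\otimes\chi$ between the factors for distinct primes $\mathfrak{l}, \mathfrak{l}'$ of $F$ above $\l$. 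Any such isomorphism would, via Chebotarev and Brauer--Nesbitt, produce an inner twist of $f$ not lying in $\Gamma$, contradicting the definition of $F$ as the fixed field of all inner twists. Restriction from $\Ga\Q$ to $\Gal(\Qb/K)$ does not shrink the image because $\Sp_4(\Fl)$ is perfect for $\l\ge 5$, and the similitude--cyclotomic coupling carves $\G$ out of the ambient product of $\Gf$'s.

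Next I would promote residual surjectivity to $\l$-adic surjectivity using the standard fact that, for $\l\ge 5$, no proper closed subgroup of $\Sp_4(\Zl)$ surjects onto $\Sp_4(\Fl)$ (a consequence of the irreducibility of $\mathfrak{sp}_4(\Fl)$ as an adjoint module). This gives (2). For the finitely many exceptional $\l \in \LL$ where the residual image may be proper, I would prove openness (1) by Lie-algebra methods: the de Rham (respectively crystalline) hypothesis defining $\LL'$, together with regularity of the Hodge--Tate weights, forces via Sen theory and a Bogomolov-type argument as in \cite{serre-abelian}*{Ch.\ IV} that the Lie algebra of the image contains a regular semisimple element; combined with Zariski density of the image in $\G_{\Ql}$ (a consequence of the compatible-system structure together with Step~1 at an auxiliary prime), this forces the full Lie algebra $\mathrm{Lie}\,\G_{\Ql}$. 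Finally, (3) follows by a Goursat--Ribet argument across $\l$: the non-abelian simple quotients of $\G(\Fl)/Z$ are of the form $\mathrm{PSp}_4(k_\mathfrak{l})$, whose orders involve only primes $\le \l$, so distinct residue characteristics contribute coprime simple factors and the product of the individual surjections is surjective. The technical heart of the argument is Step~1: ensuring that $(F,\G)$ captures \emph{every} algebraic relation among the conjugates of $\rho_\l$, so that Goursat outputs exactly $\G(\Fl)$ rather than a proper subgroup.
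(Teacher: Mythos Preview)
Your overall architecture---residual surjectivity, $\l$-adic lifting, adelic gluing---is close to the paper's, and your treatments of the lifting step and the adelic step are essentially correct and match the paper. The serious problem is in your Step~1, and it forces the paper to reverse your order: the paper proves openness (i) \emph{first}, for every $\l\in\LL$, and only then attacks residual surjectivity (ii).

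There are two related gaps in your residual argument. First, the big image theorem of \cite{weiss2018image} only gives $\Sp_4(\Fl)\subset\mathrm{image}(\orho_\lambda)$, not $\Sp_4(k_\lambda)$ for the (possibly larger) residue field $k_\lambda$ of $F$ at $\lambda$; you never explain how to upgrade. Second, and more fundamental, your Goursat step claims that a mod-$\l$ isomorphism $\orho_{\mathfrak{l}}|_K\cong\orho_{\mathfrak{l}'}|_K\otimes\chi$ would produce a characteristic-zero inner twist. It would not: Chebotarev and Brauer--Nesbitt applied to the residual representations only yield $\sigma(a_p)\equiv\chi(p)a_p\pmod{\l}$, which says nothing about equality in $\O_E$. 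So you cannot contradict the definition of $F$ this way.

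The paper's resolution is to first prove (i) for all $\l\in\LL$---using Zariski density from \cite{weiss2018image} together with Pink's theorem \cite{Pink} to pin down the projective image as open in $\PGSp_4(F_\lambda)$, rather than your proposed Sen/Bogomolov route---and then leverage openness at a single $\l$ to find, by a density argument, a prime $q$ with $F=\Q(b_q)$ (Lemma~\ref{lem:untwisted}). This single generator is what makes both halves of your Step~1 work: it upgrades $\Sp_4(\Fl)$ to $\Sp_4(k_\lambda)$ (since $b_q\bmod\lambda$ generates $k_\lambda$ over $\Fl$ for large $\l$), and it kills the Goursat obstruction (a spurious mod-$\l$ relation would force $\sigma(b_q)\equiv b_q\pmod\l$, impossible once $\l$ is large). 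Without first establishing (i), you have no mechanism to produce such a $q$, and your Step~1 does not go through.
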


\subsection{Methods}
    \subsubsection{The image of Galois}
    
    To prove \Cref{thm:precise-image}, in \Cref{sec:inner-twists}, we first generalise the notion of inner twists to the setting of Siegel modular forms, and show that they give a restriction on the image of $\rho_\l$. A key technical input is \Cref{lem:b_q}, which shows that the field $F$ cut out by the inner twists of $f$ is the trace field of the standard representation of $\rho_\l$, obtained by composing $\rho_\l$ with the maps $\Gf\to\PGSp_4\xrightarrow{\sim}\SO_5\to\GL_5$. As a result, we deduce that $F$ is generated over $\Q$ by $\{\frac{b_p}{\varepsilon(p)} : p\nmid N \}$, where $b_p$ is the coefficient of $X^2$ in the characteristic polynomial of $\rho_\l(\Frob_p)$, which does not depend on $\l$.
    
    Fix a prime $\l\in\LL$. To prove that $\wrho_\l$ has open image, using the definition of the inner twists and an argument using Goursat's lemma, we prove that it is enough to show that $\rho_\lambda|_K$ has open image inside
    \[G_\lambda := \set{g\in\Gf(\O_{F_\lambda}): \simil(g) \in\Zl^{\times(k_1 + k_2 -3)}}\]
    for each $\lambda\mid\l$. Let $H_\lambda$ be the image of $\rho_\lambda|_K$. Our starting point for proving that $\rho_\lambda|_K$ has open image is the result of \cite{weiss2018image}, that $H_\lambda$ is a Zariski dense subgroup of $\Gf(F_\lambda)$. In \Cref{sec:open-image}, we use this result in combination with a theorem of Pink \cite{Pink} to show that the projective image of $\rho_\lambda|_K$ is open in $\PGSp_4(F_\lambda)$, from which we can deduce the result.
    
    Similarly, to prove that $\wrho_\l$ is surjective, again using Goursat's lemma, we prove that it is enough to show that $\rho_\lambda$ surjects onto $G_\lambda$ for all $\lambda\mid\l$. The surjectivity of $\rho_\lambda$ was proven by Dieulefait \cite{Dieulefait2002maximalimages}*{Sec.\ 4.7} under the assumption that the field $F=\Q(\{\frac{b_p}{\varepsilon(p)} : p\nmid N \})$ is generated over $\Q$ by $b_p$ for a single prime $p$. In \Cref{lem:untwisted}, using our proof that the image of $\wrho_\l$ is open in $\G(\Zl)$, we prove that Dieulefait's assumption holds unconditionally.
    
    Once we have proven parts $(i)$ and $(ii)$ of \Cref{thm:precise-image}, part $(iii)$ follows from a straightforward but technical generalisation of the group-theoretic results of Serre \cite{serre-abelian}, Ribet \cite{Ribet75} and Loeffler \cite{Loeffler-adelic}.
    
    \subsubsection{Lang--Trotter bounds}
    
    Our proofs of \Cref{thm:unconditional-lta} and \Cref{thm:conditional-lta} are very different in nature. To prove \Cref{thm:unconditional-lta},
    we use the machinery of Serre \cite{ser81}, which works by combining the explicit Chebotarev density theorem of \cite{lo} with the $\l$-adic image of Galois for a single prime $\l$. Rather than applying this machinery to the $\lambda$-adic Galois representation $\rho_\lambda$, as Serre does for elliptic modular forms in \cite{ser81}*{Sec.\ 7}, the precision of \Cref{thm:precise-image} allows us to apply Serre's machinery to the full $\l$-adic Galois representation $\rho_\l=\bigoplus_{\lambda\mid\l}\rho_\lambda$. As a result, we obtain stronger bounds that improve with the size of $[F:\Q]$. For example, applying our method in the case of an elliptic modular form $f$ improves Serre's bound of $\pi_f(x, a)\ll_{f,\epsilon} x/(\log x)^{5/4-\epsilon}$ to the bound
    \[\pi_f(x, a) \ll_{\epsilon,f}\frac{x}{(\log x)^{1+\alpha-\epsilon}},\quad\alpha = \frac{[F:\Q]}{3[F:\Q]+1}.\]

Note that our bounds improve with the size of $[F:\Q]$. However, they remain far from the generalised Lang--Trotter conjecture of Murty \cite{murty-Frobenius}*{Conj.\ 3.1}, which predicts that as soon as $[F:\Q] \ge 3$, we should actually have $\pi_f(x, a) = O(1)$.
    
    In contrast, our proof of \Cref{thm:conditional-lta} generalises the methods of Murty--Murty--Saradha \cite{mms}, which work by combining the explicit Chebotarev density theorem with the mod $\l$ image of Galois for infinitely many primes $\l$. 
    
    Assume, for the sake of exposition, that $E = F$, so that $f$ has no inner twists, and $K=\Q$. Then, if $a\in\O_F$, by the definition of the residual representation $\worho_\l$, we have
    \begin{equation*}
        \pi_f(x, a) \le \#\set{p\le x : p\nmid \l N,\ \tr\worho_\l(\Frob_p)\equiv a\pmod\l} + O(1),    
    \end{equation*}
    where, by $\tr \worho_\l$, we mean the trace of the $\Gf(\O_F\tensor_\Z\Fl)$ component of $\G(\Fl)$, which takes values in $\O_F\tensor_\Z\Fl$.
        By \Cref{thm:precise-image}, the image of the residual representation $\worho_\l$ is $\G(\Fl)$ for all but finitely many $\l\in\LL$. Hence, for such $\l$, $\worho_\l$ factors through a finite Galois extension $L/K$, with Galois group $\G(\Fl)$. Thus, it is sufficient to bound the number of primes $p\le x$ such that $\worho_\l(\Frob_p)$ is contained in the conjugation invariant subset $\set{(g, \nu)\in \G(\Fl) : \tr(g)\equiv a\pmod\l}\sub\G(\Fl)$, which we can do using the Chebotarev density theorem. However, applying Chebotarev directly to these sets would not give the strongest possible bound.
        
    The key idea is that, under GRH, we can bound the size of $\pi_f(x, a)$ by bounding the size of the smaller set $\{p\le x : a_p = a,\ \l \text{ splits completely in } F(p)\}$ for almost all primes $\l$ that split completely in $F$ (\Cref{lem:murty-bound}). Here, $F(p)$ is the splitting field over $\Q$ of the characteristic polynomial of $\rho_\l(\Frob_p)$. If $p$ is in this smaller set, then the eigenvalues of $\worho_\l(\Frob_p)$ are in $(\O_F\tensor_\Z\Fl)\t$, so $\worho_\l(\Frob_p)$ is conjugate over $\Fl$ to an upper triangular matrix. We show in \Cref{lem4} that we can bound this set by applying the Chebotarev density theorem to an abelian extension, whose Galois group is isomorphic to the group of upper triangular matrices in $\G(\Fl)$ modulo the subgroup of unipotent upper triangular matrices. In particular, since Artin's holomorphy conjecture is known for abelian extensions, we can bound this set by using a stronger version of the Chebotarev density theorem due to Zywina \cite{zyw}.
    
    As in the proof of \Cref{thm:unconditional-lta}, rather than working with the mod $\lambda$ Galois representations $\orho_\lambda\:\Ga\Q\to\Gf(\O_E/\lambda)$, using \Cref{thm:precise-image}, we work with the mod $\l$ Galois representations $\orho_\l\:\Ga\Q \to \Gf(\O_E\tensor_\Z\Fl)$. Applying our method in the case of an elliptic modular form $f$ gives the bound (under GRH)
    	\[\pi_f(x, a) \ll_{f}\frac{x^{1-\alpha}}{(\log x)^{1-2\alpha}},\quad\alpha=\frac{[F:\Q]}{4[F:\Q]+1}.\]
 
	\subsection{Outline of the paper}
	In \Cref{sec:galois}, we recall key properties of the Galois representations attached to Siegel modular eigenforms, and generalise the notion of inner twists to Siegel modular forms. In \Cref{sec:image}, we prove \Cref{thm:precise-image}, which is a key technical result. In \Cref{sec:cdt}, we recall explicit versions of the Chebotarev density theorem, a variant due to Serre \cite{ser81} and a refinement due to Zywina \cite{zyw}. Using these inputs, we prove Theorems \ref{thm:unconditional-lta} and  \ref{thm:conditional-lta} in Sections \ref{sec:proof-unconditional} and \ref{sec:proof-conditional}.   

\section{Galois representations attached to Siegel modular forms}\label{sec:galois}
	
	Let $f$ be a cuspidal Siegel modular eigenform of weights $(k_1, k_2)$, $k_1\ge k_2\ge 2$ and level $N$. We assume throughout this paper that if $\pi$ is the cuspidal automorphic representation of $\Gf(\AQ)$ attached to $f$, then the functorial lift of $\pi$ to $\GL_4(\A_\Q)$ exists, is cuspidal, and is neither an automorphic induction nor a symmetric cube lift (see \Cref{rem:arthur}). In particular, $f$ is of type \textbf{(G)} in the notation of \cite{schmidt}, and is not CM, RM or a symmetric cube lift. Let $\varepsilon$ be the central character of $\pi$ and let $E=\Q(\{a_p : p\nmid N\}, \varepsilon)$ be the subfield of $\C$ generated by the image of $\varepsilon$ and by the Hecke eigenvalues $a_p$ of the Hecke operators $T_p$. Then $E$ is a finite extension of $\Q$.
	
	For a ring $R$, let
        \[\Gf(R) = \set{g\in\GL_4(R): g^t J g = \nu J,\ \nu \in R\t},\]
        where  $J = \br{\begin{smallmatrix}
        	0 & 0 & 0 & 1\\
        	0 & 0 & 1 & 0\\
        	0 & -1 & 0 & 0\\
        	-1 & 0 & 0 & 0
        \end{smallmatrix}}$. This choice of $J$ ensures that the upper triangular matrices in $\Gf(R)$ are a Borel subgroup. For $g\in \Gf(R)$, the constant $\nu$ is called the \emph{similitude} of $g$ and is denoted $\simil(g)$. Let $\Sp_4(R)$ be the subgroup of elements for which $\simil(g) = 1$, let $\PSp_4(R) = \Sp_4(R)/Z(\Sp_4(R))$, and let $\PGSp_4(R) = \Gf(R)/Z(\GSp_4(R))$, where $Z(\Sp_4(R))$ and $Z(\Gf(R))$ are the centres of $\Sp_4(R)$ and $\Gf(R)$.  We define the trace function
        \[\tr\:\Gf(R) \to R\]
        by viewing $g\in\Gf(R)$ as an element of $\GL_4$. We similarly define the characteristic polynomial of $g$ as a polynomial over $R$.
        
	By the work of Taylor, Laumon and Weissauer \cites{taylor1993, Laumon, Weissauer, Weissauersymplectic} when $k_2\ge 2$, and Taylor \cite{taylor1991galois} when $k_2=2$ (see also \cite{mok2014galois}), for each prime $\lambda$ of $E$, there exists a continuous semisimple symplectic Galois representation
	\[\rho_\lambda\:\Ga\Q\to \Gf(\overline E_{\lambda})\]
	that is unramified at all primes $p\nmid \l N$, and is characterised by the property
	\[\Tr\rho_{\lambda}(\Frob_p)= a_p,\qquad \simil\rho_{\lambda}(\Frob_p) = \varepsilon(p)p^{k_1 + k_2 -3},\]
	for all primes $p\nmid \l N$. By \cite{serre-mod-p-lattices}*{Thm.\ 5.2.1}, we can view $\rho_\lambda$ as a representation valued in $\Gf(\overline \O_{E_\lambda})$, and define the mod $\lambda$ representation
	\[\orho_\lambda\:\Ga\Q\to \Gf( \O_E/\lambda)\]
	to be the semisimplification of the reduction of $\rho_\lambda$ mod $\lambda$. This reduction is still symplectic by \cite{voight-6author}*{Lemma 4.3.6}. The representation $\rho_\lambda$ should be isomorphic to a representation taking values in $\Gf(E_\lambda)$ rather than $\Gf(\elb)$, but this seems not to be known in general. This ambiguity does not occur for $\orho_\lambda$, since  mod $\l$ representations are always defined over their trace field. 
	
	By work of Ramakrishnan \cite{Ramakrishnan}, Dieulefait--Zenteno \cite{DZ} and the third author \cite{weiss2018image}, the image of $\rho_\lambda$ is generically large, in the following sense. Let $\LL'$ be the set of rational primes $\l$ such that $\l\ge 5$, such that $\rho_\l|_{\Ql}$ is de Rham and such that $\rho_\l|_{\Ql}$ is crystalline if $\l\nmid N$. Then $\LL'$ is just the set of primes $\l\ge 5$ if $k_2>2$, while if $k_2 = 2$, $\LL'$ has Dirichlet density $1$ \cite{weiss2018image}*{Thm.\ 1.1}.
	
	\begin{theorem}[\cite{weiss2018image}*{Thms.\ 1.1, 1.2}]\label{thm:image-contains-sp4}
		\begin{enumerate}[leftmargin=*]
			\item If $\l\in \LL'$ and $\lambda\mid\l$, then $\rho_\lambda$ is absolutely irreducible.
			\item For all but finitely many $\l\in\LL'$, if $\lambda\mid \l$, then the image of $\orho_\lambda$ contains a subgroup conjugate to $\Sp_4(\Fl)$.
		\end{enumerate}
	\end{theorem}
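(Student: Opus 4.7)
The plan is to prove parts (1) and (2) together, following the Aschbacher-style strategy pioneered by Dieulefait. For (1), the absolute irreducibility of $\rho_\lambda$ is extracted from the cuspidality of the functorial lift of $\pi$ to $\GL_4(\A_\Q)$: if $\rho_\lambda$ decomposed non-trivially as a sum of Galois subrepresentations, then comparing the $L$-function of each constituent against that of the $\GL_4$ lift would force the latter to be either non-cuspidal or of endoscopic type, contradicting our standing hypotheses. The membership $\l\in\LL'$ (in particular $\l\geq 5$) enters to ensure that the compatible system is well-behaved at $\l$ and that standard irreducibility arguments go through.

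For (2), fix a sufficiently large $\l\in\LL'$ and $\lambda\mid\l$, and set $H_\lambda := \orho_\lambda(\Ga\Q)\cap \Sp_4(\Fl)$. By part (1), $H_\lambda$ acts absolutely irreducibly on $\Fl^4$. Apply Aschbacher's classification of maximal subgroups of $\Sp_4(\Fl)$: either $H_\lambda\supseteq \Sp_4(\Fl)$ and we are done, or $H_\lambda$ is contained in a maximal subgroup of one of the following types: (a) the stabilizer of a non-trivial direct-sum decomposition $\Fl^4 = V_1\oplus V_2$; (b) the normalizer of a maximal torus; (c) a subfield subgroup, i.e.\ $H_\lambda$ is conjugate into the $\Sp_4$ over a proper subfield of $\Fl$; (d) the image of a smaller classical group such as $\SL_2$ via a $\Sym^3$-type embedding; or (e) an exceptional finite subgroup of bounded order.

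The bulk of the argument consists in excluding each of these classes using the automorphic hypotheses on $f$. Class (a) would realise $\pi$ as an automorphic induction from a quadratic extension; class (b) would force $f$ to have CM or RM; class (d) would force $\pi$ to be a symmetric cube lift; class (e) is excluded as soon as $\l$ exceeds the order of the (finitely many) exceptional subgroups that can arise. The most delicate case is (c): this corresponds to unaccounted-for inner twists mod $\lambda$, and ruling it out for all but finitely many $\l$ requires controlling the trace field of $\rho_\lambda$ via Fontaine--Laffaille theory at the crystalline primes (whence the need for $\l\in\LL'$), and comparing it with the characteristic-zero inner twist structure recorded by $F\subseteq E$. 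This step is the main obstacle, and is what forces the ``all but finitely many $\l$'' formulation, rather than a statement valid for every $\l\in\LL'$.
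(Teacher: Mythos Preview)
The paper does not prove this theorem at all: it is quoted verbatim from \cite{weiss2018image}*{Thms.\ 1.1, 1.2} and used as a black box. So there is no ``paper's own proof'' to compare against, and your task was really to reconstruct a proof from the cited reference rather than from this paper.

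That said, the broad shape of your sketch (Aschbacher-type classification of maximal subgroups, following Dieulefait) is indeed the strategy used in \cite{weiss2018image}. Two specific issues, though. First, your setup is off: $\orho_\lambda$ takes values in $\Gf(\O_E/\lambda)$, not in $\Gf(\Fl)$, so the intersection $H_\lambda := \orho_\lambda(\Ga\Q)\cap \Sp_4(\Fl)$ is not the right object; one should work inside $\Gf(\O_E/\lambda)$ and show that the image contains (a conjugate of) $\Sp_4(\Fl)$ sitting inside it via $\Fl\hookrightarrow \O_E/\lambda$. Second, and more substantively, your case (c) analysis is misplaced. As you have written it---subfield subgroups of $\Sp_4(\Fl)$---the case is vacuous, since $\Fl$ has no proper subfields. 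The delicate inner-twist / trace-field argument you describe is exactly what is needed for the \emph{stronger} statement that the image contains $\Sp_4(\F_\lambda)$ (the full residue field of $F$), and that is precisely the content of Lemma~\ref{lem:mod-lambda-image} in the present paper, which builds on Theorem~\ref{thm:image-contains-sp4} rather than being part of its proof. For the theorem as stated, one only needs to land in $\Sp_4$ over \emph{some} extension of $\Fl$ and then observe that any such group contains $\Sp_4(\Fl)$; the genuine work lies in excluding the reducible, imprimitive, tensor-induced, symmetric-cube, and bounded-order cases, for which the crystallinity hypothesis in $\LL'$ is used to compute the tame inertia weights via Fontaine--Laffaille theory.
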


	Conjecturally, \Cref{thm:image-contains-sp4} should be true with $\LL'$ the set of all primes, however, this question is open.

	\begin{corollary}\label{cor:field-of-def}
		For all but finitely many primes $\l\in \LL'$, for each prime $\lambda\mid\l$, the Galois representation $\rho_\lambda$ descends to a representation
		\[\rho_\lambda\:\Ga\Q\to \Gf(E_\lambda).\]
	\end{corollary}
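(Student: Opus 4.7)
The plan is a standard descent argument exploiting the residual absolute irreducibility from \Cref{thm:image-contains-sp4}(ii). Fix $\l\in\LL'$ outside the finite exceptional set and let $\lambda\mid\l$, so that the image of $\orho_\lambda$ contains $\Sp_4(\Fl)$; this forces $\orho_\lambda$ to be absolutely irreducible with scalar endomorphism algebra, and in particular $\rho_\lambda$ itself is absolutely irreducible. Since $\tr\rho_\lambda(\Frob_p)=a_p\in E\sub E_\lambda$ for every $p\nmid \l N$ and Frobenius elements are dense by Chebotarev, the trace of $\rho_\lambda$ takes values in $E_\lambda$; similarly, the similitude character $\simil\circ\rho_\lambda$, whose value at $\Frob_p$ is $\varepsilon(p)p^{k_1+k_2-3}\in E\t$, is $E_\lambda\t$-valued.

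I would first descend the underlying $\GL_4$-representation. Because $\orho_\lambda$ is absolutely irreducible, Chenevier's theorem on determinants (equivalently, the Rouquier--Nyssen pseudorepresentation machinery) produces, from the $\O_{E_\lambda}$-valued trace function of $\rho_\lambda$, a unique continuous representation $\Ga\Q\to\GL_4(\O_{E_\lambda})$ with the prescribed trace; by uniqueness up to conjugation, this representation is $\GL_4(\overline{E}_\lambda)$-conjugate to the original $\rho_\lambda$, yielding the sought descent to $\GL_4(E_\lambda)$.

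To recover the symplectic structure, I would exploit that, by absolute irreducibility of the descended $\rho_\lambda$ over $E_\lambda$, the space of $\Ga\Q$-equivariant bilinear forms on the representation twisted by $\simil\circ\rho_\lambda$ is at most one-dimensional over $E_\lambda$. The original symplectic form provides a nonzero element after base change to $\overline{E}_\lambda$, so one-dimensionality forces it to be proportional to an $E_\lambda$-rational form, which witnesses that $\rho_\lambda$ takes values in $\Gf(E_\lambda)$. The one genuinely substantive step is the pseudorepresentation-to-representation descent, which relies crucially on residual absolute irreducibility; the subsequent descent of the symplectic form is then formal.
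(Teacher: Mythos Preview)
Your argument is correct and follows the same strategy as the paper's proof: use residual absolute irreducibility to descend the $\GL_4$-representation to its trace field, then recover the symplectic structure by Schur's lemma. The paper simply packages these two steps as citations---Carayol's theorem \cite{carayol1994formes}*{Th\'eor\`eme 2} for the $\GL_4$-descent and \cite{voight-6author}*{Lemma 4.3.8} for the symplectic form---whereas you spell them out, invoking Chenevier/Rouquier--Nyssen in place of Carayol and giving the one-dimensionality argument for the pairing directly.
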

	
	\begin{proof}
		By \Cref{thm:image-contains-sp4}, $\orho_\lambda$ is irreducible for all $\lambda\mid\l$, for all but finitely many $\l\in\LL'$. Hence, for such primes $\lambda$, by \cite{voight-6author}*{Lemma 4.3.8} and \cite{carayol1994formes}*{Th\'eor\`eme 2}, $\rho_\lambda$ is defined over its trace field.
	\end{proof}
		
	For each prime $\l$, we form the $\l$-adic representation
	\[\rho_\l :=\bigoplus_{\lambda\mid\l}\rho_\lambda\:\Ga\Q\to \Gf( E\tensor_\Q\Qlb),\]
	which, as before, we may view as taking values in $\Gf(\O_E\tensor_\Z\Zlb)$, and the mod $\l$ representation
	\[\orho_\l :=\bigoplus_{\lambda\mid\l}\orho_\lambda\:\Ga\Q\to \Gf(\O_E\tensor_\Z\Fl).\]    
	Finally, let $\LL\sub\LL'$ be a set of rational primes such that, for each $\l\in \LL$, $\rho_\l$ takes values in $\Gf(E\tensor_\Q\Ql)$. If $\widehat\Q_\LL = \Q\tensor_{\Z}\prod_{\l\in\LL}\Zl$, then we define \[\rho_\LL:=\bigoplus_{\l\in\LL}\rho_\l\:\Ga\Q\to \Gf(E\tensor\widehat\Q_\LL).\]

	\subsection{Inner twists}\label{sec:inner-twists}
	
	In this section, we define the notion of \emph{inner twists} for Siegel modular forms and discuss their key properties. The results in this section are well known in the case of elliptic modular forms \cite{momose, Ribet85}, and the definitions have been generalised to Siegel modular forms (and to general symplectic representations) in \cite{de-reyna-dieulefait-wiese-1}. Fix once and for all an embedding $\sigma_0\:E\hookrightarrow \C$. In particular, via this embedding, we may view the eigenvalues $a_p$ as elements of $\C$.
	
	\begin{definition}
		An \emph{inner twist} of $f$ is a pair $(\sigma, \chi)$, where $\sigma\in \Hom(E, \C)$ and $\chi$ is a Dirichlet character, such that
		\[\sigma(a_p)= \chi(p)a_p\]
		for all primes $p\nmid N$.
	\end{definition}
	
	We define $\Gamma$ to be the set of $\sigma\in \Hom(E, \C)$ for which such a $\chi$ exists. It is simple to show that $(\sigma_0, \chi)$ is an inner twist for a non-trivial character $\chi$ if and only if $f$ is CM or RM. In particular, since we have assumed that $f$ is not CM or RM, for each twist $(\sigma, \chi)$, the character $\chi$ is uniquely determined by $\sigma$, and we denote it by $\chi_\sigma$.
	
	Fix an isomorphism $\C\cong\Qlb$ for each $\l$. Then each $\sigma\in\Hom(E,\C)$ induces a map $E\tensor\Ql\to\Qlb$. If the Galois representation $\rho_\l$ takes values in $\Gf(E\tensor\Ql)$, then, for each $\sigma\in\Hom(E, \C)$, we can define $\sig\rho_\l$ to be the composition of $\rho_\l$ with the map $\sigma\:\Gf(E\tensor\Ql)\to \Gf(\Qlb)$. By the Chebotarev density theorem, a pair $(\sigma, \chi)$ is an inner twist if and only if $\sig\rho_\l \simeq \prepower{\sigma_0}\rho_\l\otimes\chi$, where we view $\chi$ as a Galois character.
	
	\begin{proposition}[\cite{Ribet-twists}*{Prop.\ 3.2}]
		If $\sigma\in\Gamma$, then $\sigma(E)\sub E$.
	\end{proposition}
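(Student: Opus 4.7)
\emph{Proof plan.} The argument adapts the strategy of Ribet \cite{Ribet-twists}*{Prop.~3.2} for elliptic modular forms to the symplectic setting.

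By \Cref{thm:image-contains-sp4} and Chebotarev, the inner-twist identity $\sigma(a_p) = \chi_\sigma(p)\, a_p$ for almost all $p$ is equivalent to the isomorphism of semisimple $\l$-adic Galois representations
\[
    \sig\rho_\l \;\simeq\; \prepower{\sigma_0}\rho_\l \otimes \chi_\sigma
\]
for any $\l \in \LL$, where $\chi_\sigma$ is viewed as a Galois character via class field theory. Comparing similitude characters on both sides and using that $\sigma$ fixes the cyclotomic character yields the key identity
\[
\chi_\sigma^2 \;=\; \sigma(\varepsilon)/\varepsilon.
\]
Any embedding $\sigma\colon E \hookrightarrow \C$ sends the finite group of roots of unity in $E$ into itself, since all primitive $n$-th roots of unity are Galois conjugate and lie in $E$ as soon as one of them does. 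Consequently $\sigma(\varepsilon(n)) \in E$ for every $(n,N) = 1$, so $\chi_\sigma^2$ is $E$-valued; in particular $[E(\chi_\sigma):E] \leq 2$.

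Since $E$ is generated over $\Q$ by $\{a_p : p\nmid N\}$ together with the image of $\varepsilon$, and we have just seen that $\sigma(\varepsilon(n)) \in E$, the inclusion $\sigma(E) \subseteq E$ reduces to the assertion that $\chi_\sigma$ itself is $E$-valued. The remaining case to exclude is $[E(\chi_\sigma):E] = 2$. In that case, letting $\tau \in \Gal(\Qb/E)$ be any lift of the non-trivial element of $\Gal(E(\chi_\sigma)/E)$, one has $\tau\chi_\sigma = \psi\chi_\sigma$ for a non-trivial quadratic Dirichlet character $\psi$. Applying $\tau$ to the displayed isomorphism and using that $\rho_\l$ is defined over $E\otimes_\Q\Q_\l$ for $\l \in \LL$ (so that $\tau$ fixes $\rho_\l$), one obtains a second inner twist $(\tau\circ\sig,\ \psi\chi_\sigma)$ of $f$. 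Following Ribet, the interplay of these two twists together with the non-existence of non-trivial self-twists of $\rho_\l$---guaranteed by our standing assumption that $\pi$ is neither CAP/endoscopic, CM, RM, nor a symmetric cube lift---produces the desired contradiction.

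The main obstacle is the verification that Ribet's manipulations with the determinant character in the elliptic case carry over to the similitude character in the Siegel case; since the algebraic features of $\Gf$ used in the argument directly mirror those of $\GL_2$, this transfers with only routine modifications.
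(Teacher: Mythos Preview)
Your approach diverges from the paper's at the crucial step, and the divergence introduces a genuine gap.

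The paper (following Ribet) proceeds directly: from the similitude comparison one gets $\chi_\sigma^2 = \sigma(\varepsilon)\varepsilon^{-1}$, and then concludes \emph{immediately} that $\chi_\sigma$ takes values in $\Q(\varepsilon)\subseteq E$. The implicit argument is purely about roots of unity: since $\sigma$ permutes roots of unity of each fixed order, $\sigma(\varepsilon)=\varepsilon^{j}$ for some $j$ coprime to $d=\mathrm{ord}(\varepsilon)$, so $\chi_\sigma^2=\varepsilon^{j-1}$; a short parity analysis on $j$ and $d$ then forces the values of $\chi_\sigma$ into $\Q(\zeta_d)=\Q(\varepsilon)$. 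Once $\chi_\sigma(p)\in E$, the identity $\sigma(a_p)=\chi_\sigma(p)a_p$ gives $\sigma(a_p)\in E$ for all $p\nmid N$ (after varying $\ell$), and the proposition follows. Note that the no-CM/RM hypothesis is not used at all.

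Your route instead stops at $\chi_\sigma^2\in E$, i.e.\ $[E(\chi_\sigma):E]\le 2$, and attempts to exclude the quadratic case by producing a second inner twist $(\tau\sigma,\psi\chi_\sigma)$ and invoking ``no non-trivial self-twists''. The problem is that $\tau$ fixes $E=\sigma_0(E)$ but has no reason to fix $\sigma(E)$; indeed $\tau(\sigma(a_p))=\psi(p)\sigma(a_p)$ shows it does not. Hence $\tau\sigma\neq\sigma$ as elements of $\Hom(E,\C)$, so you have two \emph{distinct} inner twists rather than two characters attached to the same $\sigma$, and the uniqueness of $\chi_\sigma$ yields nothing. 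To manufacture a genuine self-twist from this pair you would need to compose or invert inner twists, but the group structure on $\Gamma$ is only available \emph{after} establishing $\sigma(E)\subseteq E$---precisely what you are trying to prove. The appeal to ``Ribet's manipulations'' does not rescue this: Ribet's actual argument is the direct root-of-unity computation above, not an interplay of twists, and it neither needs nor uses the absence of self-twists.
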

	
	\begin{proof}
		Let $(\sigma, \chi)$ be an inner twist. Comparing the similitudes of $\sig\rho_\l$ and $\prepower{\sigma_0}\rho_\l\otimes\chi$, we see that $\chi^2 =  \sigma(\varepsilon)\cdot\varepsilon\ii$. Thus $\chi$ takes values in the field $\Q(\varepsilon)\sub E$. But then
		\[\sigma(a_p) = \chi(p) a_p \in E\]
		for all $p\nmid\l N$. Applying the same argument with a different prime $\l$, it follows that $\sigma(a_p)\in E$ for all $p\nmid N$, and hence that $\sigma(E)\sub E$.
	\end{proof}
	
	In particular, $\Gamma$ is a subset of $\Aut(E/\Q)$. Moreover, the inner twists $(\sigma, \chi_\sigma)$ form a group under the multiplication
	\[(\sigma, \chi_\sigma)\cdot(\tau, \chi_\tau) = (\sigma\tau, \chi_\sigma\cdot \sigma(\chi_\tau)).\]
	Thus, $\Gamma$ is a subgroup of $\Aut(E/\Q)$.
	
	\begin{definition}\label{def:F}
	    Define $F = E^\Gamma$ to be the fixed field of $\Gamma$.
	\end{definition}
	
	In fact, $E/F$ is an abelian Galois extension \cite[Prop.\ 1.7]{momose}. Each character $\chi_\sigma$ can be regarded as character of $\Ga\Q$. Its kernel is thus an open subgroup $H_\sigma$ of $\Ga\Q$.
	
	\begin{definition}\label{def:K}
	       Let $H = \bigcap_{\sigma\in\Gamma}H_\sigma$, and let $K$ be the corresponding Galois extension of $\Q$.
	\end{definition}
	
	\begin{remark}\label{rem:character-non-trivial}
		Let $\varepsilon'$ be the Nebentypus character of $f$. Then the central character $\varepsilon$ of $\pi$ is $(\varepsilon')^2$. If $\varepsilon'$ is non-trivial, then, using the Petersson inner product, $(c, (\varepsilon')\ii)$ is always an inner twist, where $c$ denotes complex conjugation. In particular, the group $H$ includes the kernel of $\varepsilon'$ and hence of $\varepsilon$.
	\end{remark}
	
	Recall from \Cref{thm:image-contains-sp4} that $\LL'$ is the set of primes $\l\ge 5$ such that $\rho_\l|_{\Ql}$ is de Rham, and crystalline if $\l\nmid N$.
	
	\begin{lemma}\label{lem:small-trace}
		For all but finitely many primes $\l\in\LL'$, $\rho_\l|_K$ descends to a representation
		\[\rho_\l|_K\:\Gal(\Qb/K)\to \Gf(F\tensor\Ql)\]
		that is defined over $F\tensor\Ql$.
	\end{lemma}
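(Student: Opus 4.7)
The plan is to apply Carayol's descent theorem \cite{carayol1994formes}*{Th\'eor\`eme 2} componentwise at each prime $\mu$ of $F$ above $\l$, after first showing that $\rho_\lambda|_K$ is absolutely irreducible with traces in $F_\mu\subset E_\lambda$. By \Cref{cor:field-of-def}, for all sufficiently large $\l\in\LL'$ and every $\lambda\mid\l$, $\rho_\lambda$ takes values in $\Gf(E_\lambda)$, and by \Cref{thm:image-contains-sp4}(ii) its residual image contains a conjugate of $\Sp_4(\Fl)$. Since $K/\Q$ is a fixed finite extension independent of $\l$, and $\PSp_4(\Fl)$ is a nonabelian simple group of order tending to infinity with $\l$, the residual image of $\rho_\lambda|_K$ still contains a conjugate of $\Sp_4(\Fl)$ once $\l$ is large enough; in particular $\rho_\lambda|_K$ is absolutely irreducible.

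Next I would identify the trace field. If $p\nmid \l N$ splits completely in $K$, then by definition of $K$ we have $\chi_\sigma(p)=1$ for every $\sigma\in\Gamma$, and the inner twist relation gives $\sigma(a_p)=a_p$ for all $\sigma\in\Gamma$, so $a_p\in E^\Gamma = F$. The Chebotarev density theorem applied to $K$ shows that the Frobenius classes of such primes are dense in $\Gal(\Qb/K)$, hence the trace of $\rho_\lambda|_K$ lands in the closure of the image of $F$ inside $E_\lambda$, i.e.\ in the canonical image of $F_\mu$, where $\mu = \lambda\cap F$. With absolute irreducibility and the trace field in place, Carayol's theorem (compare \cite{voight-6author}*{Lemma 4.3.8}) produces a representation $\rho_\mu\:\Gal(\Qb/K)\to\GL_4(F_\mu)$ with $\rho_\mu\tensor_{F_\mu}E_\lambda\simeq \rho_\lambda|_K$; because this descent is unique and $\tr\rho_\lambda|_K$ is the same element of $F_\mu$ for every $\lambda\mid\mu$, the same $\rho_\mu$ works for all such $\lambda$.

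It remains to check that $\rho_\mu$ lands in $\Gf(F_\mu)$ rather than merely $\GL_4(F_\mu)$, and to assemble the pieces. By \Cref{rem:character-non-trivial}, $\varepsilon$ is trivial on $\Gal(\Qb/K)$, so $\simil\rho_\lambda|_K$ equals the restriction of the $(k_1+k_2-3)$-th power of the $\l$-adic cyclotomic character, which already takes values in $\Zl\t \subset F_\mu\t$. Since $\rho_\lambda|_K$ is absolutely irreducible, Schur's lemma forces the $\Gal(\Qb/K)$-invariant symplectic form to be unique up to scalar, so the form descends together with the representation and $\rho_\mu$ is $\Gf(F_\mu)$-valued. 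Taking the product over $\mu\mid\l$ yields the desired descent
\[\rho_\l|_K \simeq \bigoplus_{\mu\mid\l}\rho_\mu\tensor_{F_\mu}(E\tensor_F F_\mu)\:\Gal(\Qb/K)\to \prod_{\mu\mid\l}\Gf(F_\mu) = \Gf(F\tensor_\Q\Ql).\]
The main obstacle is preserving absolute irreducibility upon restriction to $K$, since without it neither Carayol's descent nor the uniqueness of the symplectic form is available; this is controlled using the finiteness of $[K:\Q]$ together with the simplicity of $\PSp_4(\Fl)$ for large $\l$.
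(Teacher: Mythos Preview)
Your proposal is correct and follows essentially the same approach as the paper's proof: reduce to showing that the trace of $\rho_\lambda|_K$ lies in $F_\mu$ via the inner-twist relation and Chebotarev, then invoke Carayol's descent (together with \cite{voight-6author}*{Lemma 4.3.8} for the symplectic structure) once residual irreducibility is known. The paper compresses your steps into the phrase ``by the proof of \Cref{cor:field-of-def}, it is enough to show that $\Tr\rho_\l(\Gal(\Qb/K))\sub F\tensor\Ql$'', whereas you spell out explicitly the preservation of irreducibility under restriction to $K$, the triviality of $\varepsilon|_K$ via \Cref{rem:character-non-trivial}, and the assembly over primes $\mu\mid\l$.
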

	
	\begin{proof}
		By part $(ii)$ of \Cref{thm:image-contains-sp4}, the residual representation $\orho_\l|_K$ is irreducible for all but finitely many primes $\l\in\LL'$. Hence, by the proof of \Cref{cor:field-of-def}, it is enough to show that $\Tr\rho_\l(\Gal(\Qb/K))\sub F\tensor\Ql$. By the Chebotarev density theorem, it is enough to show that for all primes $p\nmid \l N$ that split completely in $K$, we have $\Tr\rho_\l(\Frob_p) \in F$. But for such primes, if $\sigma\in \Gal(E/F) = \Gamma$, we have $\Tr\rho_\l(\Frob_p) = a_p$ and $\sigma(a_p) = a_p\chi_\sigma(p) = a_p$. The result follows.
	\end{proof}
	
	\begin{definition}\label{def:final-L}
		Define $\LL\sub\LL'$ to be the set of primes in $\LL'$  to which \Cref{lem:small-trace} applies, namely such that $\rho_\l|_K$ is isomorphic to a representation taking values in $\Gf(F\tensor\Ql)$.
	\end{definition}
	
	In particular, by \Cref{thm:image-contains-sp4} and \Cref{lem:small-trace}, the set $\LL$ has density $1$, and contains all but finitely many primes if $k_2>2$.
	
	\begin{definition}\label{def:G}
	    Let $\G$ be the group scheme over $\Z$ such that, for each $\Z$-algebra $R$, we have
    	\[
    	\G(R) = \set{(g, \nu)\in \Gf(\O_F\tensor_\Z R)\times R\t : \simil(g) = \nu^{k_1 + k_2 - 3}}.    
    	\]
        We define the trace and characteristic polynomial of an element $(g, \nu)\in \G(R)$ to be the trace and characteristic polynomial of $g\in \Gf(R)$, viewed as an element of $\GL_4(R)$.
	\end{definition}
	
	By construction, if $\l\in\LL$, then $\rho_\l|_K$ can be viewed as a representation
	\[\wrho_{\l}\:\Gal(\Qb/K)\to\G(\Ql)\]
	such that the projection to $\Gf(\O_F\tensor_\Z\Ql)$ is $\rho_\l|_K$, and the projection to $\Ql\t$ is the cyclotomic character. Moreover, by \cite{serre-mod-p-lattices}*{Thm.\ 5.2.1}, we can conjugate this representation to take values in $\G(\Zl)$.
	
	We give a second interpretation of the number field $F$. For each prime $p\nmid N$, let $b_p\in\O_E$ denote the coefficient of $X^2$ in the characteristic polynomial of $\rho_\l(\Frob_p)$ for any prime $\l\ne p$.
	
	\begin{lemma}\label{lem:b_q}
		Let $F_0 = \Q(\{\frac{b_p}{\varepsilon(p) }:p\nmid N\}) $. Then $F=F_0$.
	\end{lemma}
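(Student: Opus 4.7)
The plan is to recognise $b_p/\varepsilon(p)$ as essentially the trace at $\Frob_p$ of the $5$-dimensional standard representation $\mathrm{std}(\rho_\l)$ obtained by composing $\rho_\l$ with $\Gf \to \PGSp_4 \xrightarrow{\sim} \SO_5 \hookrightarrow \GL_5$, and then to show that its trace field is precisely $F$. Concretely, $\mathrm{std}(\rho_\l)$ can be realised as $\wedge^2 \rho_\l \otimes \simil(\rho_\l)^{-1}$ modulo the trivial line spanned by the image of the symplectic form. A direct computation with the eigenvalues of a symplectic matrix gives $\tr\mathrm{std}(\rho_\l)(\Frob_p) = b_p/\nu_p - 1$, where $\nu_p = \varepsilon(p) p^{k_1+k_2-3}$; since $p^{k_1+k_2-3} \in \Q$, the trace field of $\mathrm{std}(\rho_\l)$ is exactly $F_0$.

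The inclusion $F_0 \subseteq F$ will follow from the twist-invariance of $\mathrm{std}$: twisting $\rho_\l$ by a character $\chi$ scales both $\wedge^2\rho_\l$ and $\simil(\rho_\l)$ by $\chi^2$, and these factors cancel. So for any inner twist $(\sigma, \chi_\sigma) \in \Gamma$, we have $\mathrm{std}(\sigma\rho_\l) \simeq \mathrm{std}(\rho_\l \otimes \chi_\sigma) \simeq \mathrm{std}(\rho_\l)$, and therefore $\sigma$ fixes $\tr\mathrm{std}(\rho_\l)(\Frob_p)$ for every $p \nmid N$, hence fixes $F_0$.

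For the reverse inclusion $F \subseteq F_0$, I will suppose that $\sigma \in \Aut(E/\Q)$ fixes $F_0$ and show that $\sigma$ is an inner twist. By construction, the characters of $\mathrm{std}(\sigma\rho_\l)$ and $\mathrm{std}(\rho_\l)$ agree on every Frobenius, so by Chebotarev and Brauer--Nesbitt applied to these semisimple representations (semisimplicity coming from the absolute irreducibility of $\rho_\l$ in \Cref{thm:image-contains-sp4}), they are isomorphic as $\GL_5$-valued representations. Using the Zariski density of the image of $\rho_\l$ in $\Gf$ (again from \Cref{thm:image-contains-sp4}), Schur's lemma applied to the invariant orthogonal form on $\mathrm{std}(\rho_\l)$ shows that, after rescaling, the conjugating element in $\GL_5$ may be taken in $\SO_5 \cong \PGSp_4$, and so lifts to some $\tilde h \in \Gf$. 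The function $\chi(s) := \sigma\rho_\l(s) \cdot \tilde h \rho_\l(s)^{-1} \tilde h^{-1}$ takes values in the centre of $\Gf$, namely the scalars, and a routine calculation verifies that it is a character with $\sigma\rho_\l \simeq \rho_\l \otimes \chi$. Finally, comparing similitudes gives $\chi^2 = \sigma(\varepsilon)/\varepsilon$, a finite-order Dirichlet character, so $\chi$ itself is a Dirichlet character; hence $(\sigma, \chi)$ is an inner twist, $\sigma \in \Gamma$, and $F = F_0$.

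The main obstacle will be the last direction: extracting an explicit character twist $\sigma\rho_\l \simeq \rho_\l \otimes \chi$ from the mere equality of the traces of the $5$-dimensional representation. This passes through the exceptional isogeny $\Gf \to \PGSp_4 \cong \SO_5$ and relies essentially on the strong image results of \Cref{thm:image-contains-sp4} to guarantee enough irreducibility for Schur's lemma to bite.
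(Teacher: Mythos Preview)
Your proposal is correct and follows essentially the same route as the paper: identify $F_0$ as the trace field of $\mathrm{std}(\rho_\l)$ via $\wedge^2\rho_\l\otimes\simil^{-1}\simeq\mathrm{std}(\rho_\l)\oplus\mathbf 1$, then use that $\mathrm{std}$ is blind to character twists for one inclusion and, for the other, pass from equality of $\GL_5$-traces to an $\SO_5$-isomorphism (the paper cites Ramakrishnan's lemma where you invoke Schur on the invariant form, which is the same argument) and then lift through the exceptional isogeny to a character twist of $\rho_\l$. One small imprecision: the semisimplicity (indeed irreducibility) of $\mathrm{std}(\rho_\l)$ that you need for Brauer--Nesbitt and for Schur does not follow from absolute irreducibility of $\rho_\l$ alone but from Zariski density of its image in $\Gf$, which you correctly invoke later; the paper makes this explicit by appealing to part~(ii) of \Cref{thm:image-contains-sp4}.
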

	
	\begin{proof}	
		Fix a prime $\l\in \LL'$ such that $\rho_\l$ is defined over $E\tensor\Ql$. Now, $F_0$ is the field generated by $\Tr\std\rho_\l(\Frob_p)$, where $\std\rho_\l$ is the standard representation obtained by composing $\rho_\l$ with the maps
		\[\Gf\to \PGSp_4\xrightarrow{\sim}\SO_5\to\GL_5.\]
		Indeed, we have \[\wedge^2\rho_\l\tensor\simil\ii \simeq \std\rho_\l \+ 1,\]
		and 
		\[\Tr\std\rho_\l(\Frob_p) = \frac{b_p}{\simil\rho_\l(\Frob_p)} -1 = \frac{b_p}{p^{k_1 + k_2 -3}\varepsilon(p)} -1.\]
		
		By \Cref{thm:image-contains-sp4}, changing our choice of $\l$ if necessary, we can assume that the projection $\std\prepower{\sigma_0}\rho_\l$ is irreducible for all but finitely many primes $\l\in\LL'$. Indeed, for all but finitely many $\l\in\LL'$, the image of the residual representation $\prepower{\sigma_0}\orho_\l$ contains $\Sp_4(\Fl)$. Hence, the image of $\std\prepower{\sigma_0}\orho_\l$ contains $\SO_5(\Fl)$, which is an irreducible subgroup of $\GL_5(\Fl)$. Hence, $\std\prepower{\sigma_0}\rho_\l$ is irreducible.
		
		In general, if $\rho_1, \rho_2\:G\to \Gf(\Qlb)$ are representations, then the projective representations
		\[\Proj\rho_1,\ \Proj\rho_2\: G\to \PGSp_4(\Qlb)\]
		are isomorphic if and only if $\rho_1$ and $\rho_2$ are character twists of each other. Hence, $\sigma\in\Gamma$ if and only if the two projective representations \[\Proj\prepower{\sigma_0}\rho_\l,\ \Proj\sig\rho_\l\:\Ga\Q\to \PGSp_4(\Qlb)\] 
		are isomorphic. Via the exceptional isomorphism, it follows that $\sigma\in\Gamma$ if and only if 
		$\std\prepower{\sigma_0}\rho_\l$ and $\std\sig\rho_\l$	are isomorphic as $\SO_5$-valued representations. Following the argument \cite{Ramakrishnan-appendix}*{pp.\ 35-36}, we see that $\std\prepower{\sigma_0}\rho_\l$ and $\std\sig\rho_\l$ are isomorphic as $\SO_5$ representations if and only if they are isomorphic as $\GL_5$ representations. Indeed, both representations are irreducible, and the claim follows from the lemma on page $34$ of \cite{Ramakrishnan-appendix}.   
		
		By the Brauer--Nesbitt theorem, we find that $\sigma\in\Aut(E/\Q)$ is an element of $\Gamma=\Aut(E/F)$ if and only if $\std\prepower{\sigma_0}\rho_\l$ and $\std\sig\rho_\l$ have the same trace, which, by the Chebotarev density theorem,  is equivalent to having  $\sigma(\Tr\std\rho_\l(\Frob_p)) = \sigma_0(\Tr\std\rho_\l(\Frob_p))$ for all $p\nmid \l N$. Running the argument again with a different choice of $\l$, we see that $\sigma\in\Gamma$ if and only if $\sigma$ fixes $\frac{b_p}{\varepsilon(p)}$ for all $p\nmid N$. Thus $\sigma\in \Aut(E/\Q)$ fixes $F$ if and only if $\sigma$ fixes $F_0$, so $F = F_0$.
	\end{proof}
	
	\section{The image of Galois}\label{sec:image}
	
	In this section, we prove \Cref{thm:precise-image}. Let $f$ be a cuspidal Siegel modular eigenform of weights $(k_1, k_2)$, $k_1\ge k_2\ge 2$ and level $N$. Assume, as always, that $f$ is of type \textbf{(G)} in the notation of \cite{schmidt}, and that it is not CM, RM or a symmetric cube lift. Let $\varepsilon$ be the central character of the automorphic representation $\pi$ corresponding to $f$ and let $E=\Q(\{a_p : p\nmid N\},\varepsilon)$ be the coefficient field of $f$, and let $F, K$ be the number fields defined in \Cref{def:F} and \Cref{def:K}. Let $\LL$ be the set of primes defined in \Cref{def:final-L} and, for $\l\in\LL$, write
	\[\wrho_{\l}\:\Gal(\Qb/K)\to\G(\Zl)\]
	for the Galois representation defined just after \Cref{def:G}.
	
	\subsection{Open image}\label{sec:open-image}
	
	In this subsection, we prove part $(i)$ of \Cref{thm:precise-image}. Note that the composition of $\wrho_\l$ with the projection to $\Zl\t$ is the cyclotomic character, and hence is surjective. Therefore, to show that the image of $\wrho_\l$ is open in $\G(\Zl)$, it is equivalent to show that the image of $\rho_\l|_K$ is an open subgroup of 
	\[G_\l := \set{g\in \Gf(\O_F\tensor_\Z\Zl) : \simil(g)\in \Zl^{\times(k_1 + k_2 - 3)}}.\]
	
	Fix a prime $\l\in\LL$ and write $F\tensor\Ql = \prod_{\lambda\mid \l}F_\lambda$, where the product is over the primes $\lambda$ of $F$ above $\l$. Fix a prime $\lambda\mid\l$. Denote by $\rho_\lambda|_K$ the representation
	\[\rho_\lambda|_K\:\Gal(\Qb/K)\to \Gf(F_\lambda)\]
	obtained via the projection $F\tensor\Ql\to F_\lambda$, and let
	\[\Proj\rho_\lambda|_K\:\Gal(\Qb/K)\to\PGSp_4(F_\lambda)\]
	be the associated projective Galois representation. Let $H_\lambda\adj$ be the image of $\Proj\rho_\lambda|_K$.
	
	\begin{lemma}
		$H_\lambda\adj$ is Zariski dense in $\PGSp_4(F_{\lambda})$, where $\PGSp_4$ is viewed as an algebraic group over $F_{\lambda}$.
	\end{lemma}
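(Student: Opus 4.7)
Plan: The strategy is to show that the image of $\rho_\lambda|_K$ is Zariski dense in $\Gf$ over $F_\lambda$, from which Zariski density of $H_\lambda^{\mathrm{adj}}$ in $\PGSp_4(F_\lambda)$ follows by projecting along the surjective morphism of algebraic groups $\Gf\twoheadrightarrow\PGSp_4$, which carries Zariski-dense subsets to Zariski-dense subsets.

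First I would descend the big-image input of \Cref{thm:image-contains-sp4} to $\rho_\lambda|_K$. Since $K/\Q$ is a finite Galois extension, for each prime $\mu$ of $E$ above $\l$ the image of $\rho_\mu|_K$ is a normal subgroup of index at most $[K:\Q]$ inside the image of $\rho_\mu$. By \Cref{thm:image-contains-sp4}(ii), the latter contains a conjugate of $\Sp_4(\Fl)$. For $\l\ge 5$ the only proper normal subgroups of $\Sp_4(\Fl)$ are $\{1\}$ and $\{\pm I\}$, both of index at least $|\Sp_4(\Fl)|/2$; hence for $\l$ sufficiently large (depending only on $[K:\Q]$) the mod $\mu$ image of $\rho_\mu|_K$ must itself contain this $\Sp_4(\Fl)$. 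Because $\Sp_4(\Fl)$ sits in the subring $\Gf(\O_F/\lambda)\subset \Gf(\O_E/\mu)$, the same group $\Sp_4(\Fl)$ is then contained in the mod $\lambda$ image of $\rho_\lambda|_K$.

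Now let $Z\subseteq\Gf$ denote the Zariski closure of the image of $\rho_\lambda|_K$, as a closed algebraic subgroup over $F_\lambda$. Applying the similitude character and the Chebotarev density theorem, the image of $\simil\circ\rho_\lambda|_K$ contains $\set{\varepsilon(p)p^{k_1+k_2-3}:p\nmid \l N,\ p \text{ splits completely in }K}$, which is Zariski dense in $\GL_1$; hence $\simil(Z)=\GL_1$. If $Z\cap\Sp_4\subsetneq\Sp_4$, then $Z\cap\Sp_4$ is a proper closed subgroup of $\Sp_4$ of dimension at most $9$. Taking a flat integral model over $\O_{F_\lambda}$ and reducing modulo $\lambda$ yields a scheme of dimension at most $9$, whose $\Fl$-points number $O(\l^9)$ by Lang--Weil. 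But by the previous step this set contains $\Sp_4(\Fl)$, of order $\sim \l^{10}$---contradicting $\l$ large. Hence $\Sp_4\subseteq Z$, and combined with $\simil(Z)=\GL_1$, we conclude $Z=\Gf$. Projecting along $\Gf\twoheadrightarrow\PGSp_4$ then gives the lemma.

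The main obstacle is the dimension-counting step, which requires some care in producing a well-behaved integral model of $Z$---a priori only defined over $F_\lambda$, not $\O_{F_\lambda}$. An alternative route avoiding integral models altogether is to invoke the Larsen--Pink theorem on finite subgroups of semisimple algebraic groups, which directly bounds the dimension of any closed subgroup of $\Sp_4$ whose $\Fl$-points contain $\Sp_4(\Fl)$.
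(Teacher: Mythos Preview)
Your approach is genuinely different from the paper's, and there is a real gap. The lemma, as stated and used, must hold for \emph{every} prime $\l\in\LL$ and every $\lambda\mid\l$: it is the key input to the proof of part~$(i)$ of the main image theorem, which asserts open image for each $\l\in\LL$, not merely for sufficiently large $\l$. Your argument, however, needs $\l$ large in at least two places: first, when you pass from the residual image of $\rho_\mu$ to that of $\rho_\mu|_K$ via the normal-subgroup-of-bounded-index argument; second, in the Lang--Weil/dimension-count step, where you need $|\Sp_4(\Fl)|$ to exceed the $O(\l^9)$ bound. In particular, you are invoking part~$(ii)$ of the big-image theorem (residual image contains $\Sp_4(\Fl)$ for all but finitely many $\l$), whereas the paper instead uses part~$(i)$ (absolute irreducibility of $\rho_\lambda$ for all $\l\in\LL$), which is available for every $\l\in\LL$ without exception. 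The paper then cites results of the third author showing that, under the running non-endoscopy/non-CM hypotheses, irreducibility of $\rho_\lambda|_K$ forces the Zariski closure of the projective image to be all of $\PGSp_4$.

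A secondary issue, which you already flag, is the passage from the Zariski closure $Z$ over $F_\lambda$ to point counts mod $\lambda$. Producing a flat integral model of $Z\cap\Sp_4$ whose special fibre has the expected dimension is not automatic, and the Larsen--Pink alternative you mention bounds orders of finite subgroups of algebraic groups in characteristic $p$, which is not quite the direction you need without further argument (you want to conclude something about a group in characteristic $0$ from its residual image). None of this is insurmountable for large $\l$, but it does not rescue the small-$\l$ case, which is the essential gap.
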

	
	\begin{proof}
		By the assumption that $\l\in\LL$, $\rho_\lambda$ is irreducible, and hence $\rho_\lambda|_K$ is irreducible by \cite{weiss2018image}*{Lem.\ 5.9}. Hence, by \cite{weiss2018image}*{Cor.\ 5.6, Rem.\ 5.5}, the Zariski closure of $H_\lambda\adj$ is $\PGSp_4(F_\lambda)$.
	\end{proof}
	
	\begin{lemma}\label{lem:open-image}
		$H_\lambda\adj$ is an open subgroup of $\PGSp_4(F_{\lambda})$.
	\end{lemma}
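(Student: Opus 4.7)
The plan is to apply Pink's theorem on compact Zariski-dense subgroups of adjoint semisimple algebraic groups over local fields \cite{Pink}. After conjugation, the image of $\rho_\lambda|_K$ lies in $\Gf(\O_{F_\lambda})$, so $H_\lambda\adj$ is compact; the previous lemma supplies Zariski density in $\PGSp_4(F_\lambda)$. Pink's theorem then produces a closed subfield $k'\subseteq F_\lambda$, an absolutely simple $k'$-form $G'$ of $\PGSp_4$ together with a fixed isomorphism $G'\otimes_{k'}F_\lambda\cong\PGSp_4$, and a closed embedding $G'(k')\hookrightarrow\PGSp_4(F_\lambda)$ inside which $H_\lambda\adj$ is open. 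Showing $k'=F_\lambda$ then proves the lemma.

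The idea is to pin down $k'$ via traces in the standard representation. Since $G'$ is an inner form of $\SO_5$ over $k'$, it carries a $5$-dimensional quadratic representation defined over $k'$ that base-changes to the standard representation $\std\:\PGSp_4\simeq\SO_5\hookrightarrow\GL_5$. Hence $\tr\std(g)\in k'$ for every $g\in H_\lambda\adj$. Using the decomposition $\wedge^2\rho_\lambda\otimes\simil\ii\simeq\std\rho_\lambda\oplus 1$ from the proof of \Cref{lem:b_q} and applying this to Frobenius elements $\Frob_\mathfrak{p}$ at unramified primes $\mathfrak{p}$ of $K$ yields $\tr\std\rho_\lambda(\Frob_\mathfrak{p})\in k'$. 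Specialising to rational primes $p\nmid\l N$ that split completely in $K$---at which $\varepsilon(p)=1$ by \Cref{rem:character-non-trivial}---this gives $\tfrac{b_p}{p^{k_1+k_2-3}}-1\in k'$ and hence $b_p\in k'$.

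To conclude $k'=F_\lambda$, I would show that the trace field of $\std\rho_\lambda|_K$ equals $F$. The point is that $\std$ is insensitive to character twists of $\rho_\lambda$, so the argument of \Cref{lem:b_q} (using absolute irreducibility of $\std\rho_\lambda|_K$, which follows from \Cref{thm:image-contains-sp4} since the image of $\orho_\lambda|_K$ contains $\SO_5(\Fl)$, together with Brauer--Nesbitt) adapts to identify this trace field with $E^\Gamma=F$. Since $F$ is dense in $F_\lambda$ and $k'$ is closed, this forces $k'=F_\lambda$. The chief obstacle is this final identification of trace fields: while \Cref{lem:b_q} carries out the analogous argument for $\Gal(\Qb/\Q)$, restricting to $\Gal(\Qb/K)$ requires checking that the Chebotarev and Brauer--Nesbitt steps still go through inside the smaller Galois group, which ultimately relies on $\std\rho_\lambda|_K$ remaining absolutely irreducible.
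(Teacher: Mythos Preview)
Your approach is essentially the paper's: apply Pink's structure theorem to the compact Zariski-dense subgroup $H_\lambda\adj\subset\PGSp_4(F_\lambda)$, obtain a closed subfield over which $H_\lambda\adj$ is open, and then argue that this subfield must be all of $F_\lambda$. The paper's proof is much shorter only because it identifies the subfield by a direct citation: by \cite[Prop.~0.6]{Pink}, the subfield $L$ is exactly the trace field of $H_\lambda\adj$ under the adjoint representation, and the paper then simply asserts that this trace field equals $F_\lambda$.

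Where you diverge is in \emph{how} you pin down the subfield. Rather than invoking Pink's adjoint-trace characterisation, you argue that the $k'$-form $G'$ carries a $5$-dimensional orthogonal representation over $k'$, so that the traces of $\std\rho_\lambda|_K$ land in $k'$; you then want to identify the field generated by these traces with $F$ via \Cref{lem:b_q}. This is a legitimate alternative route, and in fact it makes explicit exactly the step the paper leaves implicit (namely, why the trace field is $F_\lambda$ and not something smaller). Your ``chief obstacle''---that the trace field of $\std\rho_\lambda|_K$, not just of $\std\rho_\lambda$, is still $F$---is genuine but not hard to overcome: if $\sigma$ fixes every $\tr\std\rho_\lambda(\Frob_\p)$ for $\p$ split in $K$, then $\sig(\std\rho_\lambda)|_K\simeq\std\rho_\lambda|_K$ by Brauer--Nesbitt, whence $\sig(\std\rho_\lambda)\simeq\std\rho_\lambda\otimes\psi$ for some character $\psi$ of $\Gal(K/\Q)$; taking determinants and using self-duality forces $\psi^5=\psi^2=1$, hence $\psi=1$, so $\sigma\in\Gamma$ fixes $F$ by the argument of \Cref{lem:b_q}.

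In short, your proof is correct and follows the same strategy; the paper just shortcuts your second and third paragraphs by appealing to \cite[Prop.~0.6]{Pink}.
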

	
	\begin{proof}
		We argue as in \cite[Prop.\ 3.16]{conti2016galois}. $\PGSp_4$ is an absolutely simple, connected adjoint group over $F_{\lambda}$ and the adjoint representation of $\PGSp_4$ is irreducible. Since $H\adj_\lambda$ is Zariski dense in $\PGSp_4(F_{\lambda})$, by \cite[Thm.\ 0.7]{Pink}, there is a model $H$ of $\PGSp_4$, defined over a closed subfield $L\sub F_{\lambda}$, such that $H\adj_\lambda$ is an open subgroup of $H(L)$. Moreover, by \cite[Prop.\ 0.6]{Pink}, the field $L$ is exactly the trace field of $H_\lambda\adj$, i.e.\ we have $L = F_{\lambda}$, and hence $H = \PGSp_4$. It follows that $H\adj_\lambda$ is an open subgroup of $\PGSp_4(F_{\lambda})$.
	\end{proof}
	
	\begin{lemma}\label{lem:image-open-lambda}
		The image of $\rho_\lambda|_K$ is conjugate to an open subgroup of 
		\[G_\lambda := \set{g\in \Gf(\O_{F_{\lambda}}) : \simil(g) \in \Zl^{\times(k_1 + k_2 -3)}}.\]
	\end{lemma}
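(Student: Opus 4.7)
The plan is to leverage the openness of the projective image established in \Cref{lem:open-image}, together with control of the similitude character, to identify the Lie algebra of $H_\lambda$ with that of $G_\lambda$.

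First, I would observe that since $\rho_\lambda|_K$ is a continuous representation of a compact group into $\Gf(F_\lambda)$, the image $H_\lambda$ is compact, and a standard argument (taking a $\Gal(\Qb/K)$-stable $\O_{F_\lambda}$-lattice) produces $g\in\Gf(F_\lambda)$ such that $gH_\lambda g\ii\sub\Gf(\O_{F_\lambda})$. Next, I would verify that this conjugate lies in $G_\lambda$: the similitude $\simil\circ\rho_\lambda$ equals $\varepsilon\cdot\chi_{\mathrm{cyc}}^{k_1+k_2-3}$, and $K$ contains the kernel of $\varepsilon$ by \Cref{rem:character-non-trivial} (using that $\chi_c = \varepsilon\ii$ is an inner twist whenever $\varepsilon$ is non-trivial), so the similitude of $\rho_\lambda|_K$ is $\chi_{\mathrm{cyc}}^{k_1+k_2-3}$, which takes values in $\Zl^{\times(k_1+k_2-3)}$.

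After these preliminaries, the main work is a Lie algebra calculation. Let $\mathfrak{h}_\lambda=\mathrm{Lie}(H_\lambda)$ as a $\Ql$-Lie subalgebra of $\mathrm{Lie}(G_\lambda)=\mathfrak{sp}_4(F_\lambda)\oplus\Ql\cdot I$. From \Cref{lem:open-image}, the image of $\mathfrak{h}_\lambda$ under the adjoint quotient $\mathfrak{gsp}_4\to\mathfrak{psp}_4\cong\mathfrak{sp}_4$ is all of $\mathfrak{sp}_4(F_\lambda)$. Using the direct sum decomposition $\mathfrak{gsp}_4=\mathfrak{sp}_4\oplus Z(\mathfrak{gsp}_4)$ and the fact that $Z(\mathfrak{gsp}_4)$ is central, commutators of lifts collapse onto $\mathfrak{sp}_4$, so $\mathfrak{sp}_4(F_\lambda)\sub\mathfrak{h}_\lambda$. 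Separately, the similitude character restricted to $H_\lambda$ has infinite image in $\Zl^\times$ (it equals $\chi_{\mathrm{cyc}}^{k_1+k_2-3}|_K$), so its derivative is a surjection $\mathfrak{h}_\lambda\to\Ql$. Combining these, $\mathfrak{h}_\lambda=\mathfrak{sp}_4(F_\lambda)\oplus\Ql\cdot I=\mathrm{Lie}(G_\lambda)$, and a closed subgroup of a $\Ql$-analytic group whose Lie algebra equals the ambient Lie algebra is open.

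The main potential obstacle is the bookkeeping between the $F_\lambda$-algebraic structure and the $\Ql$-analytic structure: one must be careful that ``Lie algebra of $G_\lambda$'' means the $\Ql$-Lie algebra, so that the similitude direction contributes only a one-dimensional piece $\Ql\cdot I$ rather than the full $F_\lambda$. Once this is handled correctly, the splitting argument using centrality of $Z(\mathfrak{gsp}_4)$ and simplicity of $\mathfrak{sp}_4$ is straightforward, and the integrality step is routine compactness.
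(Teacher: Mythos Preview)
Your proposal is correct and achieves the same conclusion as the paper, but the route is genuinely different at the key step. Both arguments start from \Cref{lem:open-image} (openness of the projective image) and end by invoking the surjectivity of the similitude onto an open subgroup of $\Zl^{\times(k_1+k_2-3)}$; they diverge in how one passes from ``projective image open'' to ``contains an open subgroup of $\Sp_4(F_\lambda)$''. The paper argues directly at the group level: the isogeny $\Sp_4(F_\lambda)\to\PGSp_4(F_\lambda)$ has degree $2$, and $H_\lambda\cap\Sp_4(F_\lambda)$ surjects onto $H_\lambda\adj\cap\PSp_4(F_\lambda)$, so $H_\lambda$ contains an open subgroup of $\Sp_4(F_\lambda)$, hence a principal congruence subgroup of $\Sp_4(\O_{F_\lambda})$. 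Your argument instead works at the Lie-algebra level, using that $Z(\mathfrak{gsp}_4)$ is central and $\mathfrak{sp}_4$ is perfect to force $\mathfrak{sp}_4(F_\lambda)\subset\mathfrak{h}_\lambda$. Your approach is more uniform (it would apply verbatim with $\Sp_4$ replaced by any simply connected simple group, without needing to know the degree of the isogeny to the adjoint form), while the paper's argument is shorter and entirely elementary, avoiding any appeal to the Lie functor or to perfectness of the Lie algebra.
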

	
	\begin{proof}
		Up to conjugation, we may assume that the image $H_\lambda$ of $\rho_\lambda|_K$ is contained in $G_\lambda$. Arguing again as in \cite[Prop.\ 3.16]{conti2016galois}, we see that $H_\lambda$ must contain an open subgroup of $\Sp_4({F_{\lambda}})$. Indeed, by \Cref{lem:open-image}, the projective image $H\adj_\lambda$ of $H_\lambda$ is an open subgroup of $\PGSp_4(F_{\lambda})$. Since the map $\Sp_4(F_{\lambda})\to \PGSp_4(F_{\lambda})$ has degree $2$, and since $H_\lambda\cap \Sp_4(F_{\lambda})$ surjects onto $H_\lambda\adj\cap \PGSp_4(F_{\lambda})$, $H_\lambda$ must contain an open subgroup of $\Sp_4({F_{\lambda}})$. In other words, $H_\lambda$ contains a principal congruence subgroup of $\Sp_4(\O_{F_{\lambda}})$. Thus, $H_\lambda\cap \Sp_4(\O_{F_{\lambda}})$ is open in $\Sp_4(\O_{F_{\lambda}})$. Since the similitude of $\rho_\lambda$ surjects onto $\Zl^{\times(k_1 + k_2 -3)}$, it follows that $H_\lambda$ is open in $G_\lambda$. 
	\end{proof}

	\begin{proof}[Proof of Theorem $\ref{thm:precise-image}~(i)$]
		We argue as in \cite{momose}*{Thm.\ 4.1}. We first show that, if $\lambda_1, \lambda_2$ are two distinct primes of $F$ above $\l$, then the representations $\rho_{\lambda_1}|_K$ and $\rho_{\lambda_2}|_K$ are not isomorphic when restricted to any finite extension. 
		
		For each $i$, let $\widetilde{\lambda}_i$ be a prime of $E$ above $\lambda_i$. Then, from the diagram
		\[\begin{tikzcd}
\Gal(\Qb/K) \arrow[r, "\rho_{\widetilde\lambda_{i}}|_K"] \arrow[rd, "\rho_{\lambda_{i}}|_K"'] & \Gf(E_{\widetilde\lambda_i})           \\
                                                                                          & \Gf(F_{\lambda_i}) \arrow[u, hook]
\end{tikzcd}\]
		it is clear that $\rho_{\lambda_i}|_K\simeq\rho_{\widetilde\lambda_i}|_K$ when	viewed as representations valued in $E_{\widetilde\lambda_i}$.
		
	The primes $\widetilde\lambda_1, \widetilde\lambda_2$ correspond to two embeddings $\sigma_{1}, \sigma_{2}\in \Hom(E, \Qlb)$. Moreover, since $\lambda_1\ne\lambda_2$, $\sigma_1|_F\ne\sigma_2|_F$. For each $i$, $\prepower{\sigma_i}\rho_\l\simeq\rho_{\widetilde\lambda_i}$. Suppose that $\prepower{\sigma_1}\rho_\l|_L\simeq\prepower{\sigma_2}\rho_\l|_L$ for some finite extension $L/\Q$. Since $\rho_{\lambda_1}$ and $\rho_{\lambda_2}$ remain irreducible upon restriction to any finite extension, it follows from the proof of Theorem 2(iii) on page 169 of \cite{raj} that $\prepower{\sigma_1}\rho_\l\tensor\chi\simeq\prepower{\sigma_2}\rho_\l$ for some Dirichlet character $\chi$. Hence, by the definition of $F$ as the field fixed by the inner twists, we must have $\sigma_1|_F=\sigma_2|_F$, a contradiction.
		
		We can now apply the analysis of \cite{ribet1976galois}*{Ch.\ IV, \S4}. Let $\overline \h = \h\tensor\Qlb$, where $\h$ is the Lie algebra of the image of $\rho_\l$. Let $\overline\g = \g\tensor\Qlb$, where $\g$ is the Lie algebra of $G_\l$. Then 
        \[\overline\g\sub \gsp_4(\O_F\tensor\Ql)\simeq \bigoplus_{\lambda} \gsp_4(F_\lambda).\]
        
        By \Cref{lem:image-open-lambda}, for each prime $\lambda$, the projection of $\overline\h$ to $\gsp_4(F_\lambda)$ is surjective. By the above argument, if $\lambda_1$ and $\lambda_2$ are distinct primes of $F$ above $\l$, then the projections of  $\overline\h$ and $\overline \g$ onto $\gsp_4(F_{\lambda_1})\times\gsp_4(F_{\lambda_2})$ are the same. By the Lie algebra version of Goursat's lemma \cite{ribet1976galois}*{Lemma, p.790}, it follows that $\overline\h = \overline\g$. Since $\h\sub\g$, it follows that $\h = \g$, i.e. that the image of $\rho_\l|_K$ is an open subgroup of $G_\l$. Hence, the image of $\wrho_\l$ is an open subgroup of $\G(\Zl)$.
	\end{proof}
	
	\subsection{The precise image for almost all primes}

	In this section, we prove part $(ii)$ of \Cref{thm:precise-image}. As in the previous section, to show that $\wrho_\l$ surjects onto $\G(\Zl)$, it is sufficient to show that $\rho_\l|_K$ surjects onto $G_\l$. Moreover, as the following lemma shows, it is sufficient to show that the residual representation $\orho_\l|_K$ surjects onto
	\[\overline G_\l := \set{g\in \Gf(\O_F\tensor_\Z\Fl): \simil(g)\in \Fl^{\times(k_1 + k_2 -3)}}.\]
	
	\begin{lemma}\label{lem:ribet-product}
		Let $\l \ge 5$ be prime, and let $F_1, \ldots, F_t$ be finite unramified extensions of $\Ql$ with rings of integers $\O_1, \ldots, \O_t$ and residue fields $\F_1, \ldots, \F_t$. Let $H$ be a closed subgroup of $\Sp_4(\O_1)\times\cdots \times \Sp_4(\O_t)$, which surjects onto $\PSp_4(\F_1)\times \cdots \times \PSp_4(\F_t)$. Then $H = \Sp_4(\O_1)\times\cdots \times \Sp_4(\O_t)$. 
	\end{lemma}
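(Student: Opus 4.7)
The plan is to prove the lemma in two stages: first the case $t=1$, then the general case by induction on $t$ using Goursat's lemma. First I would upgrade the hypothesised surjection $H\twoheadrightarrow \prod \PSp_4(\F_i)$ to a surjection onto $\prod \Sp_4(\F_i)$. Since $\ell\ge 5$, each $\Sp_4(\F_i)$ is perfect and its centre $\{\pm I\}$ lies in the commutator subgroup. Letting $\overline{H}$ denote the image of $H$ in $\prod \Sp_4(\F_i)$, the hypothesis gives $\overline{H}\cdot \prod\{\pm I\}=\prod\Sp_4(\F_i)$, and since $\prod\{\pm I\}$ is central, $[\overline{H},\overline{H}]=[\prod\Sp_4(\F_i),\prod\Sp_4(\F_i)]=\prod\Sp_4(\F_i)$, so $\overline{H}=\prod\Sp_4(\F_i)$.

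\textbf{Step 2 (The case $t=1$).} For $t=1$, so that $H\le \Sp_4(\O)$ surjects onto $\Sp_4(\F)$, I would argue via the principal congruence filtration. Let $K^{(n)}=\ker(\Sp_4(\O)\to\Sp_4(\O/\ell^n))$; for $\ell\ge 5$ the group $K^{(1)}$ is a uniform pro-$\ell$ group with Frattini quotient $K^{(1)}/K^{(2)}\cong \mathfrak{sp}_4(\F)$, which is absolutely irreducible as an $\Sp_4(\F)$-module under the adjoint action (the only bad prime for type $C_2$ being $\ell=2$). By Burnside's basis theorem, to prove $H\supseteq K^{(1)}$ it suffices to show that $M:=H\cap K^{(1)}$ surjects onto $\mathfrak{sp}_4(\F)$. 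The image $V\subseteq \mathfrak{sp}_4(\F)$ of $M$ is an $\Sp_4(\F)$-submodule (as $M$ is normal in $H$), so by irreducibility $V=0$ or $V=\mathfrak{sp}_4(\F)$. If $V=0$, then $H/M\cong \Sp_4(\F)$ would embed as a complement to $\mathfrak{sp}_4(\F)$ in $\Sp_4(\O/\ell^2)$, splitting the extension $1\to \mathfrak{sp}_4(\F)\to \Sp_4(\O/\ell^2)\to \Sp_4(\F)\to 1$. However, for $\ell\ge 5$ this extension does not split: any lift $\tilde u\in \Sp_4(\O/\ell^2)$ of a non-trivial unipotent $u\in \Sp_4(\F)$ satisfies $\tilde u^\ell\equiv 1+\ell(\tilde u-1)\not\equiv 1\pmod{\ell^2}$, while $u^\ell=1$. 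Hence $V=\mathfrak{sp}_4(\F)$ and $H=\Sp_4(\O)$.

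\textbf{Step 3 (General $t$).} For general $t$, I would induct on $t$. By Step 2 applied to each projection, $p_i(H)=\Sp_4(\O_i)$; by the induction hypothesis, the projection onto any $t-1$ factors is the full corresponding product. Writing $G=A\times B$ with $A=\Sp_4(\O_t)$ and $B=\prod_{i<t}\Sp_4(\O_i)$, Goursat's lemma then presents $H$ as the graph of an isomorphism $\phi\colon A/N_1\xrightarrow{\sim}B/N_2$ for some normal subgroups $N_1\triangleleft A$, $N_2\triangleleft B$. If the common quotient $Q=A/N_1$ were non-trivial, then since $A$ is perfect for $\ell\ge 5$, $Q$ would have a non-abelian composition factor, which must be $\PSp_4(\F_t)$ (the unique such composition factor of $A$). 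Symmetrically $Q$ would have some $\PSp_4(\F_i)$ with $i<t$ as a composition factor, forcing $\PSp_4(\F_t)\cong \PSp_4(\F_i)$. But then composing $\phi$ with the projections to $\PSp_4(\F_t)$ and $\PSp_4(\F_i)$ would make the image of $H$ in $\PSp_4(\F_t)\times \PSp_4(\F_i)$ a proper graph subgroup, contradicting the hypothesis that $H$ surjects onto $\prod \PSp_4(\F_j)$. Thus $Q$ is trivial, $N_1=A$, $N_2=B$, and $H=\prod \Sp_4(\O_i)$. The main technical obstacle will be the non-splitting statement in Step 2; the $\ell$-th power computation suffices but could alternatively be replaced by standard cohomological facts on central extensions of simply-connected Chevalley groups over $\Z/\ell^2$ for $\ell\ge 5$.
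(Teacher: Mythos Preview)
Your argument is essentially correct and gives a self-contained proof, whereas the paper's proof is a two-line appeal to an external reference: after upgrading the surjection to $\prod_i\Sp_4(\F_i)$ (exactly your Step~1, phrased slightly differently), the paper simply invokes \cite{DKR}*{Lem.~2} to conclude. So your approach is genuinely different in that you replace the black-box citation by an explicit argument via the congruence filtration and Goursat's lemma. The payoff of your route is that it is self-contained and makes the role of $\ell\ge 5$ transparent (irreducibility of the adjoint module, non-splitting of $\Sp_4(\O/\ell^2)\to\Sp_4(\F)$); the payoff of the paper's route is brevity.

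Two points in your write-up would benefit from tightening. In Step~2, the identity $\tilde u^{\ell}\equiv 1+\ell(\tilde u-1)\pmod{\ell^2}$ is not literally true for an arbitrary unipotent lift, because the terms $\binom{\ell}{k}(\tilde u-1)^k$ with $2\le k\le 3$ contribute at order $\ell$; it is cleanest to take $u$ a symplectic transvection so that $(u-1)^2=0$, in which case your computation goes through verbatim. In Step~3, the sentence ``composing $\phi$ with the projections to $\PSp_4(\F_t)$ and $\PSp_4(\F_i)$'' hides a non-trivial step: you need that $\PSp_4(\F_t)$ is actually a \emph{quotient} of $Q=A/N_1$ (not merely a composition factor), and that the resulting surjection $B\twoheadrightarrow Q\twoheadrightarrow\PSp_4(\F_t)$ necessarily factors through the standard projection $B\to\PSp_4(\F_i)$ for some $i<t$. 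Both follow from the fact (itself a consequence of your Step~2) that every proper closed normal subgroup of $\Sp_4(\O_j)$ lies in $\{\pm I\}\cdot K_j^{(1)}$, so that the only simple non-abelian quotient of $\Sp_4(\O_j)$ is $\PSp_4(\F_j)$, via the standard reduction map; this deserves a sentence.
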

	
	\begin{proof}
		This is a generalisation of \cite{Ribet75}*{Thm.\ 2.1} and \cite{Loeffler-adelic}*{Lem.\ 1.1.1}. Since no proper subgroup of $\Sp_4(\F_j)$ surjects onto $\PSp_4(\F_j)$ for any $j = 1,\ldots, t$, it follows that $H$ surjects onto $\Sp_4(\F_1)\times \cdots \times \Sp_4(\F_t)$. The result now follows from \cite{DKR}*{Lem.\ 2}.
	\end{proof}
	
	\begin{corollary}\label{cor:gl}
	    Let $\l \ge 5$ be a prime that is unramified in $F$, and suppose that $H$ is a closed subgroup of $G_\l$ that surjects onto $\overline G_\l$. Then $H = G_\l$.
	\end{corollary}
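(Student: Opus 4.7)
The plan is to reduce to \Cref{lem:ribet-product} by passing to the commutator subgroup of $H$. Set $\Sp_\l := \Sp_4(\O_F \tensor_\Z \Zl) = \prod_{\lambda \mid \l} \Sp_4(\O_{F_\lambda})$, which is the kernel of the similitude $\nu : G_\l \to \Zl^{\times(k_1+k_2-3)}$, and $\overline\Sp_\l := \prod_{\lambda \mid \l} \Sp_4(\F_\lambda)$ for its mod-$\l$ reduction. Because $\nu$ has abelian target, the closed commutator subgroup $\overline{[H,H]}$ lies in $\Sp_\l$ and its image modulo $\l$ lies in $\overline\Sp_\l$. Since $\l \ge 5$ is unramified in $F$, every residue field satisfies $|\F_\lambda| \ge \l \ge 5$, so each $\Sp_4(\F_\lambda)$ is perfect; combined with the fact that $\overline G_\l/\overline\Sp_\l$ is abelian, this gives $[\overline G_\l, \overline G_\l] = \overline\Sp_\l$.

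Next, because $H$ surjects onto $\overline G_\l$, every commutator in $\overline G_\l$ lifts to a commutator in $H$, so the image of $\overline{[H,H]}$ modulo $\l$ is exactly $\overline\Sp_\l$. Composing with the natural surjection $\overline\Sp_\l \twoheadrightarrow \prod_{\lambda\mid\l}\PSp_4(\F_\lambda)$, we see that $\overline{[H,H]}$ is a closed subgroup of $\Sp_\l$ surjecting onto $\prod_{\lambda\mid\l}\PSp_4(\F_\lambda)$. Applying \Cref{lem:ribet-product} then forces $\overline{[H,H]} = \Sp_\l$, and in particular $H \supseteq \Sp_\l$.

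To conclude $H = G_\l$, I would invoke the exact sequence $1 \to \Sp_\l \to G_\l \xrightarrow{\nu} \Zl^{\times(k_1+k_2-3)} \to 1$: since $H$ contains the kernel $\Sp_\l$, it coincides with $\nu^{-1}(\nu(H))$, so it suffices to show $\nu(H) = \Zl^{\times(k_1+k_2-3)}$. The surjection hypothesis forces $\nu(H)$ to map onto $\Fl^{\times(k_1+k_2-3)}$ modulo $\l$; together with the openness of $\nu(H)$ in $\Zl^\times$---available from part~$(i)$ of \Cref{thm:precise-image} in the intended applications---and the decomposition $\Zl^\times \cong \mu_{\l-1} \times (1+\l\Zl)$, this pins $\nu(H)$ down to the full similitude group.

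The main obstacle is precisely this similitude step: a general closed subgroup of $\Zl^{\times(k_1+k_2-3)}$ that surjects only modulo $\l$ need not coincide with the whole group (the Teichm\"uller lifts give a potential obstruction), so leveraging openness or comparable pro-$\l$ control on $\nu(H)$ is essential to close the gap, whereas the reduction of the $\Sp_4$-part via \Cref{lem:ribet-product} is routine.
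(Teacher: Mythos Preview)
Your reduction of the $\Sp_4$-part to \Cref{lem:ribet-product} via commutators is exactly the paper's argument: the paper sets $H_0 = H\cap\Sp_4(\O_F\tensor_\Z\Zl)$, notes $H_0\supseteq [H,H]$, uses perfection of $\Sp_4(\F_\lambda)$ for $|\F_\lambda|\ge 5$ to deduce that $H_0$ surjects onto $\Sp_4(\O_F\tensor_\Z\Fl)$, and then applies \Cref{lem:ribet-product} to get $H_0 = \Sp_4(\O_F\tensor_\Z\Zl)$.

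Your caution at the similitude step is entirely justified, and in fact the corollary is \emph{false} as a purely group-theoretic statement. With $F=\Q$ and any $m=k_1+k_2-3\ge 1$, the closed subgroup $H=\{g\in G_\l:\simil(g)\in\mu_{\l-1}^m\}$ contains $\Sp_4(\Zl)$ and surjects onto $\overline G_\l$ (lift similitudes via Teichm\"uller), yet $\simil(H)=\mu_{\l-1}^m\subsetneq\Zl^{\times m}$. The paper's proof simply asserts that $H$ ``surjects onto $\Zl^{\times(k_1+k_2-3)}$'' without justification, so it shares the gap you identified. Your suggested patch via openness is also insufficient: replacing $\mu_{\l-1}^m$ by $\mu_{\l-1}^m\times(1+\l^2\Zl)^m$ gives an open subgroup of $\Zl^{\times m}$ that still surjects modulo $\l$ but is proper. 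What actually rescues the intended application is that there $H$ is the image of $\rho_\l|_K$, whose similitude is the $(k_1+k_2-3)$-th power of the $\l$-adic cyclotomic character; since the cyclotomic character restricted to $\Gal(\Qb/K)$ surjects onto $\Zl^\times$ for every $\l$ unramified in $K$, one has $\simil(H)=\Zl^{\times(k_1+k_2-3)}$ outright. The statement should really carry $\simil(H)=\Zl^{\times(k_1+k_2-3)}$ as an explicit hypothesis.
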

	
	\begin{proof}
        Let $H'$ be the commutator subgroup of $H$ and let $H_0 = H\cap \Sp_4(\O_F\tensor_\Z\Zl)$. Clearly, $H_0 \supseteq H'$. Since $\Sp_4(\F)$ is a perfect group (i.e.\ a group that is equal to its own commutator subgroup) whenever $\F$ is a field of order at least $5$, we see that $H_0$ surjects onto $\Sp_4(\O_F\tensor_\Z\Fl)$. Therefore, by \Cref{lem:ribet-product}, $H_0 = \Sp_4(\O_F\tensor_\Z\Zl)$. Hence, $H$ is a closed subgroup of $G_\l$ that contains $\Sp_4(\O_F\tensor_\Z\Zl)$ and surjects onto $\Zl^{\times(k_1 + k_2 -3)}$. Thus, $H = G_\l$.
	\end{proof}

	Recall \Cref{lem:b_q}, that the field $F$ is equal to $\Q(\{\frac{b_p}{\varepsilon(p)} :p\nmid N\})$, where $b_p$ is the coefficient of $X^2$ in the characteristic polynomial of $\rho_\l(\Frob_p)$. 
	
	\begin{lemma}\label{lem:untwisted}
		There exists a prime $q\nmid N$ such that $F = \Q(b_q)$.
	\end{lemma}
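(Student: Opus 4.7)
The strategy is to combine the open image theorem of part $(i)$ of \Cref{thm:precise-image} (already proved) with equidistribution of Frobenius in $\G(\Zl)$. Fix a prime $\l\in\LL$. By \Cref{rem:character-non-trivial}, $K$ contains the kernel of $\varepsilon$, so $\varepsilon(p)=1$ whenever $p$ splits completely in $K$. For such $p\nmid\l N$, the identity from the proof of \Cref{lem:b_q} reduces to
\[\tr\std\rho_\l(\Frob_p)=\frac{b_p}{p^{k_1+k_2-3}}-1,\]
so $\Q(b_p)=\Q(\tr\std\rho_\l(\Frob_p))\subseteq F$. It therefore suffices to exhibit a single prime $q\nmid N$ split in $K$ such that $\tr\std\rho_\l(\Frob_q)$ generates $F$ over $\Q$.

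Consider the morphism of $\Z$-schemes $\Phi\:\G\to\mathrm{Res}_{\O_F/\Z}\mathbb{A}^1$ defined by $(g,\nu)\mapsto \tr(\wedge^2 g)/\nu^{k_1+k_2-3}-1$. On $\Qb$-points, $\G(\Qb)$ is identified with
\[\bigl\{\bigl((g_\sigma)_\sigma,\nu\bigr):g_\sigma\in\Gf(\Qb),\ \nu\in\Qb\t,\ \simil(g_\sigma)=\nu^{k_1+k_2-3}\ \forall\,\sigma\:F\hookrightarrow\Qb\bigr\},\]
and $\Phi$ becomes the coordinatewise map $((g_\sigma),\nu)\mapsto(\tr\std g_\sigma)_\sigma\in\Qb^{[F:\Q]}$. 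On each factor, the restriction of $\tr\std\:\Gf\to\mathbb{A}^1$ to any fibre of $\simil$ is surjective: writing a general element of the fibre as $g_0h$ with $h\in\Sp_4(\Qb)$ gives $\tr\std(g_0h)=\tr(\std(g_0)\std(h))$, which ranges over all of $\Qb$ as $h$ varies. Varying the $g_\sigma$ independently shows that $\Phi$ surjects onto $\Qb^{[F:\Q]}$, so in particular $\Phi$ is dominant.

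For each maximal proper subfield $L\subsetneq F$, the inclusion $L\hookrightarrow F$ induces a closed embedding $\mathrm{Res}_{L/\Q}\mathbb{A}^1\hookrightarrow\mathrm{Res}_{\O_F/\Z}\mathbb{A}^1$ which is a proper linear subspace after base change to $\Qb$. By dominance of $\Phi$, the preimage $X_L:=\Phi^{-1}(\mathrm{Res}_{L/\Q}\mathbb{A}^1)\subsetneq\G$ is a proper Zariski-closed subscheme. Hence $X_L(\Zl)\cap\Im(\wrho_\l|_K)$ is a closed $\l$-adic analytic subset of strictly lower dimension inside the open subgroup $\Im(\wrho_\l|_K)\subseteq\G(\Zl)$ guaranteed by part $(i)$ of \Cref{thm:precise-image}, and hence has Haar measure zero. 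Applying Serre's equidistribution theorem for continuous representations into compact $\l$-adic analytic groups \cite{serre-abelian} to $\wrho_\l|_K$, and translating from primes of $K$ to rational primes split in $K$, the set of primes $p$ split in $K$ with $\wrho_\l(\Frob_p)\in X_L(\Zl)$ has density zero.

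Since $F$ has only finitely many maximal proper subfields $L_1,\dots,L_m$ and the primes split completely in $K$ have positive density $1/[K:\Q]$, we can find $q\nmid N$ split in $K$ such that $b_q/q^{k_1+k_2-3}\notin L_i$ for every $i$; for such $q$, $\Q(b_q)=\Q(b_q/q^{k_1+k_2-3})=F$. The main technical obstacle is verifying dominance of $\Phi$, which is the only step genuinely specific to the Siegel setting; everything else is a formal consequence of part $(i)$ of \Cref{thm:precise-image}.
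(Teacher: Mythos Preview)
Your proof is correct and follows essentially the same approach as the paper: both use the open image result (part $(i)$ of \Cref{thm:precise-image}) together with Chebotarev/equidistribution to find a split-in-$K$ Frobenius whose $b$-value generates $F$, after noting via \Cref{rem:character-non-trivial} that $\varepsilon(q)=1$ for such $q$. The paper phrases this more directly by considering the open non-empty subset $U=\{g\in H_\l : b(g)/\varepsilon(g)\text{ generates }F\otimes\Ql\text{ as a }\Ql\text{-algebra}\}$ of the image $H_\l=\rho_\l(\Gal(\Qb/K))$ and applying Chebotarev, rather than your complementary formulation via proper Zariski-closed preimages $X_L$ and Haar-measure-zero sets, but the content is identical.
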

	
	\begin{proof}
		We argue as in \cite[Thm.\ 3.1]{Ribet85}. Let $H_\l$ be the image of $\rho_\l|_K$ and, for an element $g\in H_\l$, let $b(g)$ denote the coefficient of  $X^2$ in its characteristic polynomial. Consider the set
		\[U = \set{g\in H_\l : \frac{b(g)}{\varepsilon(g)}\text{ generates }F\tensor\Ql \text{ as a }\Ql\text{-algebra}}.\]
		Then $U$ is an open subset of $H_\l$, which is in turn an open subgroup of $G_\l$ by part $(i)$ of \Cref{thm:precise-image}. Since $G_\l$ contains elements $g$ such that $\frac{b(g)}{\varepsilon(g)}$ generates $F\tensor\Ql$ as a $\Ql$-algebra, we see that $U$ is the intersection of two non-empty open subsets of $G_\l$, so is itself non-empty. Since $U$ is closed under conjugation, by the Chebotarev density theorem, there exists a rational prime $q\nmid\l N$ that splits completely in $K$ such that $\rho_\l(\Frob_q)\in U$. Hence, $\frac{b_q}{\varepsilon(q)}$ generates $F\tensor\Ql$ as a $\Ql$-algebra, so $F = \Q(\frac{b_q}{\varepsilon(q)})$. By \Cref{rem:character-non-trivial}, $\varepsilon(q) = 1$, so $F = \Q(b_q)$. 
	\end{proof}
	
	\begin{lemma}\label{lem:mod-lambda-image}
		For all but finitely many primes $\l\in\LL$, for all primes $\lambda\mid\l$ of $F$, the image of $\orho_\lambda|_K$ is exactly
		\[\overline G_\lambda :=\set{g\in\Gf(\F_\lambda ): \simil(g) \in \Fl^{\times(k_1 + k_2 -3)}}.\]
	\end{lemma}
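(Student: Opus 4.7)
The plan is to combine \Cref{lem:untwisted} with a large-image argument for the residual representations and Dieulefait's classification of subgroups of $\overline G_\lambda$ containing $\Sp_4(\Fl)$ from \cite{Dieulefait2002maximalimages}*{Sec.\ 4.7}. Fix once and for all a prime $q\nmid N$ as supplied by \Cref{lem:untwisted}: it splits completely in $K$ and satisfies $F = \Q(b_q)$. For $\l\in\LL$ sufficiently large and a prime $\lambda\mid\l$ of $F$, write $\overline H_\lambda$ for the image of $\orho_\lambda|_K$. Since the similitude of $\rho_\l|_K$ is $\chi_{\cyc}^{k_1+k_2-3}$, we already have $\overline H_\lambda\sub\overline G_\lambda$, and the aim is to upgrade this inclusion to an equality.

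First, I would show that $\overline H_\lambda$ contains a conjugate of $\Sp_4(\Fl)$. By \Cref{thm:image-contains-sp4}, for almost all $\l$ and each prime $\lambda_E$ of $E$ above $\lambda$, the full image $\orho_{\lambda_E}(\Ga\Q)$ contains a conjugate of $\Sp_4(\Fl)$. Restricting to $K$ replaces this image by a normal subgroup of index at most $[K:\Q]$; since $\PSp_4(\Fl)$ is simple for $\l\ge 3$ and $\Sp_4(\Fl)$ has no proper normal subgroup other than $\{\pm I\}$, a standard bounded-index argument shows that once $\l$ exceeds a bound depending only on $[K:\Q]$, this normal subgroup still contains a conjugate of $\Sp_4(\Fl)$. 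As $\orho_{\lambda_E}|_K$ factors through the inclusion $\Gf(\F_\lambda)\hookrightarrow\Gf(\F_{\lambda_E})$, this conjugate already lives inside $\overline H_\lambda$. Next, because $q$ splits completely in $K$, $\Frob_q$ lies in $\Gal(\Qb/K)$, so $\orho_\lambda(\Frob_q)\in\overline H_\lambda$ has $X^2$-coefficient $b_q\bmod\lambda$; since $b_q$ generates $F$ over $\Q$, its reduction generates $\F_\lambda$ over $\Fl$ for all but finitely many $\l$. Finally, the similitude character of $\orho_\lambda|_K$ is $\chi_{\cyc}^{k_1+k_2-3}$, which surjects onto $\Fl^{\times(k_1+k_2-3)}$ once $\l$ is unramified in $K$.

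With these three ingredients in hand, I would then invoke Dieulefait's classification from \cite{Dieulefait2002maximalimages}*{Sec.\ 4.7}: any subgroup of $\overline G_\lambda$ that contains $\Sp_4(\Fl)$, surjects onto $\Fl^{\times(k_1+k_2-3)}$ via the similitude, and contains an element whose $X^2$-coefficient generates $\F_\lambda$ over $\Fl$ must equal $\overline G_\lambda$. The hard part is really this classification, which rests on the structure of proper overgroups of $\Sp_4(\Fl)$ inside $\Gf(\F_\lambda)$---essentially such overgroups come from $\Gf$ of a proper intermediate subfield, and the generation hypothesis on the $X^2$-coefficient of Frobenius at $q$ is precisely what rules out those possibilities. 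The rest is then a combination of \Cref{thm:image-contains-sp4}, \Cref{lem:untwisted}, and the routine bounded-index argument sketched above.
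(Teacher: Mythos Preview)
Your proposal is correct and follows essentially the same approach as the paper: fix $q$ via \Cref{lem:untwisted}, use \Cref{thm:image-contains-sp4} to get $\Sp_4(\Fl)$ inside $\overline H_\lambda$, and then run the subfield argument of \cite{Dieulefait2002maximalimages}*{Sec.\ 4.7} to force $\F_\lambda$. The only difference is that the paper spells out Dieulefait's step explicitly at the projective level---taking $\F$ maximal with $\PSp_4(\F)\subseteq\overline H_\lambda^{\mathrm{adj}}$, lifting $\orho_\lambda(\Frob_q)$ to $\Gf(\F)$ up to a scalar $t$, and then using the extra hypothesis that $a_q$ is a unit modulo $\l$ to deduce $t^2\in\F$ and hence $b_q\in\F$---whereas you package this as a black-box classification.
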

	\begin{proof}
		We  follow a similar argument to \cite[Section 4.7]{Dieulefait2002maximalimages}. Since the similitude of $\orho_\lambda|_K$ surjects onto $\Fl^{\times(k_1 + k_2 -3)}$, it is enough to show that the image of $\orho_\lambda|_K$ contains $\Sp_4(\F_\lambda)$ for all $\lambda\mid\l$, for all but finitely many primes $\l\in\LL$. \
		
		Let $\l\in\LL$ and, for each prime $\lambda\mid\l$, let $\overline H_\lambda$ be the image of $\orho_\lambda|_K$ and let $\overline H_\lambda\adj\sub \PSp_4(\F_{\lambda})$ be the intersection of the projective image of $\orho_\lambda|_K$ with $\PSp_4(\F_\lambda)$. By \Cref{thm:image-contains-sp4}, we may assume that $\overline H_\lambda\adj$ contains $\PSp_4(\Fl)$. Now, suppose for contradiction that $\overline H_\lambda\adj\subsetneq \PSp_4(\F_\lambda)$. By Mitchell's classification of the maximal subgroups of $\PSp_4$ \cite{mitchell1914subgroups} (c.f.\ \cite{Dieulefait2002maximalimages}*{Sec.~3.1}), it follows that $\overline H_\lambda\adj\sub\PSp_4(\F)$ for some proper subfield $\F$ of $\F_\lambda$.
        
		Let $q$ be the prime from \Cref{lem:untwisted} such that $F = \Q(b_q)$. Suppose further that $\l\ne q$, that $\l$ does not divide the discriminant of the minimal polynomial of ${b_q}$ and that $a_q = \Tr\rho_\lambda(\Frob_q)$ is invertible modulo $\l$; these conditions exclude only finitely many primes $\l\in\LL$. Then $\F_\lambda = \Fl(\overline b_q)$, where $\overline b_q \equiv b_q\pmod \lambda$. Suppose that $\F\subsetneq \F_{\lambda}$. Then every element of $\overline H_\lambda\adj$ has a lift to $\overline H_\lambda$ with characteristic polynomial in $\F[X]$. Since the characteristic polynomial of $\orho_\lambda(\Frob_q)$ is $X^4 - a_qX^3 + b_qX^2 - a_qq^{k_1 + k_2 - 3}X + q^{2(k_1 + k_2 -3)}\pmod {\lambda}$, we see that there exists an element $t\in \F_{\lambda}\t$ such that $ta_q, t^2 b_q, t^3a_q\in \F$. Since $a_q$ is invertible modulo $\l$, it follows that $t^2 = \frac{t^3 a_q}{ta_q}\in \F\t$, and hence that $b_q\in\F$, contradicting the fact that $\F_{\lambda} = \Fl(\overline b_q)$.
		
		It follows that $\overline H_\lambda\adj=\PSp_4(\F_{\lambda})$ and hence that $\overline H_\lambda$ contains $\Sp_4(\F_{\lambda})$. Thus, $\overline H_\lambda = \overline G_\lambda$.
	\end{proof}
	
	\begin{remark}
	     A slightly weaker version of \Cref{lem:mod-lambda-image} also follows from combining \Cref{thm:image-contains-sp4} with \cite{de-reyna-dieulefait-wiese-1}*{Thm.\ 1.3}.
	\end{remark}
	
	Part $(ii)$ of \Cref{thm:precise-image} now follows inductively from Goursat's Lemma:
	
	\begin{lemma}[Goursat's Lemma, \cite{Ribet75}*{Lem.\ 3.2}]\label{lem:goursat}
		Let $G_1, G_2$ be groups and let $H$ be a subgroup of $G_1\times G_2$ for which the two projections $p_i\:H\to G_i$ are surjective. Let $N_1$ be the kernel of $p_2$ and let $N_2$ be the kernel of $p_1$. Then the image of $H$ in $G_1/N_1\times G_2/N_2$ is the graph of an isomorphism $G_1/N_1\xrightarrow{\sim} G_2/N_2$.
	\end{lemma}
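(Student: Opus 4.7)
The plan is to run the standard proof of Goursat's lemma, which is essentially a bookkeeping exercise but worth laying out carefully. First I would verify that $N_1$ is normal in $G_1$ (and symmetrically $N_2$ in $G_2$). If $n \in N_1$, equivalently $(n, e) \in H$, then for any $g_1 \in G_1$ surjectivity of $p_1$ gives some $(g_1, g_2) \in H$, and then $(g_1, g_2)(n, e)(g_1, g_2)^{-1} = (g_1 n g_1^{-1}, e) \in H$, so $g_1 n g_1^{-1} \in N_1$. Hence the quotients $G_1/N_1$ and $G_2/N_2$ are well-defined as groups.

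Next, I would construct the promised isomorphism $\phi \colon G_1/N_1 \xrightarrow{\sim} G_2/N_2$ directly from $H$. Given $g_1 \in G_1$, use surjectivity of $p_1$ to pick $(g_1, g_2) \in H$, and set $\phi(g_1 N_1) = g_2 N_2$. To check well-definedness, I would suppose $(g_1, g_2), (g_1', g_2') \in H$ with $g_1^{-1} g_1' = n \in N_1$; then $(g_1, g_2)(n, e) = (g_1', g_2) \in H$, and comparing with $(g_1', g_2') \in H$ forces $(e, g_2^{-1} g_2') \in H$, so $g_2 N_2 = g_2' N_2$. The map $\phi$ is a homomorphism because $H$ is closed under multiplication, and the symmetric construction starting from $p_2$ supplies a two-sided inverse, so $\phi$ is an isomorphism.

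Finally, I would note that by construction every $(g_1, g_2) \in H$ maps to $(g_1 N_1, \phi(g_1 N_1))$ in $G_1/N_1 \times G_2/N_2$, i.e.\ into the graph of $\phi$; conversely, every point on the graph arises this way since $p_1$ is surjective. Hence the image of $H$ equals the graph of $\phi$, as required. The whole argument is elementary and I foresee no genuine obstacle; the only care needed is to keep the two factors and their respective kernels in their proper places.
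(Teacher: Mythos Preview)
Your proof is correct and is the standard argument for Goursat's lemma. The paper does not supply its own proof of this statement; it simply quotes the lemma from \cite{Ribet75}*{Lem.\ 3.2} and uses it as a black box in the proof of Theorem~\ref{thm:precise-image}$(ii)$, so there is nothing further to compare.
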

	
	\begin{proof}[Proof of Theorem $\ref{thm:precise-image}~(ii)$]
		We argue as in the proof of $(3.1)$ of \cite{Ribet75}. If $\lambda_1, \lambda_2$ are two primes of $F$ above $\l$, then the image $\overline H$ of $\orho_{\lambda_1}|_K\times\orho_{\lambda_2}|_K$ is a subgroup of $\overline G_{\lambda_1}\times\overline G_{\lambda_2}$ and, by \Cref{lem:mod-lambda-image}, we can assume that the image of each of the two projections to $\overline G_{\lambda_i}$ is surjective. 
		
		Let $N_2$ and $N_1$ be the kernels of the projections of $\overline H$ onto $\overline G_{\lambda_1}$ and $\overline G_{\lambda_2}$. Then, by Goursat's lemma, the image of $\overline H$ in $\overline G_{\lambda_1}/N_1\times \overline G_{\lambda_2}/N_2$ is the graph of an isomorphism $\overline G_{\lambda_1}/N_1\xrightarrow{\sim}\overline G_{\lambda_2}/N_2$.
		
		Let $\overline G$ be the projection of $\overline G_\l$ onto $\overline G_{\lambda_1}\times\overline G_{\lambda_2}$. Since the kernel of $\overline G\to \overline{G}_{\lambda_1}$ is $\Sp_4(\F_{\lambda_2})$ we have $N_2\le \Sp_4(\F_{\lambda_2})$ and similarly $N_1 \le \Sp_4(\F_{\lambda_1})$. By the isomorphism $\overline G_{\lambda_1}/N_1\xrightarrow{\sim}\overline G_{\lambda_2}/N_2$ we have $N_2 =  \Sp_4(\F_{\lambda_2})$ if and only if $N_1 = \Sp_4(\F_{\lambda_1})$, in which case $\overline H = \overline G$.
		
		Thus, if $\overline H$ is a proper subgroup of $\overline G$, then, for each $i$, $N_i$ is a proper normal subgroup of $\Sp_4(\F_{\lambda_i})$, so $N_i\sub\{\pm I\}$.  The isomorphism ${\overline G_{\lambda_1}}/N_1 \xrightarrow{\sim}  {\overline G_{\lambda_2}}/N_2$ now implies that $\F_{\lambda_1} = \F_{\lambda_2}$ and that there are elements $\sigma\in \Gal(\F_{\lambda_1}/\Fl)$ and  $S \in \Gf(\F_{\lambda_1})$ such that, for each $(g_1, g_2)\in \overline G$, there is a scalar $\chi(g_1, g_2)$ such that $g_2 = \chi(g_1, g_2) \cdot\sigma (S g_1 S\ii)$. Since $\simil\orho_{\lambda_1}|_K=\simil\orho_{\lambda_2}|_K \in \Fl^{\times}$, it follows that $\chi(g_1, g_2)^2 = 1$ for all $(g_1, g_2)\in\overline G$. Hence, computing the characteristic polynomials of $g_2$ and $\chi(g_1, g_2) \cdot\sigma (S g_1 S\ii)$ and equating the coefficient of $X^2$, we find that
		\[b(g_2) = \sigma(b(g_1))\]
		for all $g_1, g_2\in \overline G$, where $b(g_i)$ denotes the coefficient of $X^2$ in the characteristic polynomial of $g_i$. 
		This contradicts \Cref{lem:untwisted}, which states there exists a prime $q$ such that $F = \Q(b_q)$, i.e.\ if $\l$ is large enough, the element $\Tr\std\orho_\l(\Frob_q)\in \O_F\tensor_\Z\Fl$ generates $\O_F\tensor_\Z\Fl$ as an $\Fl$-algebra.
		
		It follows that $\overline H$ of $\orho_{\lambda_1}|_K\times\orho_{\lambda_2}|_K$ surjects onto $\overline G$. Hence, by induction, the image of $\orho_{\l}|_K$ is $\overline G_\l$, and the result follows from \Cref{cor:gl}.
	\end{proof}
	
	\subsection{Adelic large image}
	
	Finally, we prove part $(iii)$ of \Cref{thm:precise-image}. We begin with some group theoretic results, which are mostly generalisations of \cite{serre-abelian}*{Ch.\ IV} and \cite{Loeffler-adelic}*{Sec.\ 1}.
	
	\begin{definition}[\cite{serre-abelian}*{IV-25}]
		Let $Y$ be a profinite group, and let $\Sigma$ be a non-abelian finite simple group. We say that $\Sigma$ \emph{occurs in $Y$} if there exist closed subgroups $Y_1, Y_2$ of $Y$ such that $Y_1\lhd Y_2$ and $Y_2/Y_1\cong \Sigma$. We write $\Occ(Y)$ for the set of non-abelian finite simple groups occurring in $Y$.
	\end{definition}
	
	\begin{lemma}[\cite{serre-abelian}*{IV-25}]\label{lem:serre-occ}
		If $Y = \varprojlim Y_a$ and each $Y\to Y_a$ is surjective, then $\Occ(Y) = \bigcup \Occ(Y_a)$. If $Y$ is an extension of $Y'$ and $Y''$, then $\Occ(Y) = \Occ(Y')\cup \Occ(Y'')$.
	\end{lemma}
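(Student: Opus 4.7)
The plan is to treat the two assertions separately, both being standard profinite group arguments in the spirit of \cite{serre-abelian}*{IV-25}.

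For the inverse limit statement, one direction is immediate: if $\Sigma \in \Occ(Y_a)$, witnessed by closed subgroups $H_1 \lhd H_2 \le Y_a$ with $H_2/H_1 \cong \Sigma$, then pulling $H_1, H_2$ back along the surjection $Y \twoheadrightarrow Y_a$ gives closed subgroups of $Y$ with the same quotient, showing $\Sigma \in \Occ(Y)$. For the reverse inclusion, suppose $Y_1 \lhd Y_2 \le Y$ with $Y_2/Y_1 \cong \Sigma$ finite simple. The key observation is that since $\Sigma$ is finite, $Y_1$ is open in $Y_2$ in the induced profinite topology, so $Y_1$ contains the kernel of the restriction of $Y \to Y_a$ to $Y_2$ for some sufficiently large index $a$. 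Letting $Y_{2,a}, Y_{1,a}$ denote the images of $Y_2, Y_1$ in $Y_a$, one then checks that $Y_{1,a} \lhd Y_{2,a}$ and $Y_{2,a}/Y_{1,a} \cong Y_2/Y_1 \cong \Sigma$, exhibiting $\Sigma \in \Occ(Y_a)$.

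For the extension statement, write the extension as $1 \to Y'' \to Y \to Y' \to 1$. The inclusion $\Occ(Y'') \sub \Occ(Y)$ is trivial since $Y''$ is a closed subgroup of $Y$, and $\Occ(Y') \sub \Occ(Y)$ follows by pulling back a witness in $Y'$ along $Y \twoheadrightarrow Y'$ as in the previous paragraph. The substantive direction is $\Occ(Y) \sub \Occ(Y') \cup \Occ(Y'')$: given $Y_1 \lhd Y_2 \le Y$ with $Y_2/Y_1 \cong \Sigma$, consider the image of $Y_2 \cap Y''$ in $\Sigma = Y_2/Y_1$, which is the normal subgroup $(Y_2 \cap Y'')Y_1/Y_1$. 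By simplicity of $\Sigma$, this is either trivial or all of $\Sigma$. In the first case, $Y_2 \cap Y'' \sub Y_1$, so $\Sigma$ factors through $Y_2/(Y_2\cap Y'') \cong Y_2 Y''/Y''$, which is a subgroup of $Y/Y'' = Y'$, and the images of $Y_1, Y_2$ in $Y'$ witness $\Sigma \in \Occ(Y')$. In the second case, the surjection $Y_2 \cap Y'' \twoheadrightarrow \Sigma$ has kernel $Y_1 \cap Y''$, which is normal in $Y_2 \cap Y''$ (since $Y_1 \lhd Y_2$), so $\Sigma \cong (Y_2 \cap Y'')/(Y_1 \cap Y'')$ witnesses $\Sigma \in \Occ(Y'')$.

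Neither step should present a real obstacle; the only mild subtlety is the topological point in the first assertion, namely that openness of $Y_1$ in $Y_2$ allows one to descend $\Sigma$ to some $Y_a$, and the Jordan--H\"older style case analysis in the second assertion, which is entirely formal once one exploits that $\Sigma$ is simple.
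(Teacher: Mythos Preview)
Your proof is correct and follows the standard argument (essentially Serre's). Note, however, that the paper does not actually give a proof of this lemma: it is simply quoted from \cite{serre-abelian}*{IV-25} without proof, so there is no ``paper's own proof'' to compare against. Your write-up supplies exactly the details one would expect, with the only subtle points being the openness of $Y_1$ in $Y_2$ (to descend to some $Y_a$) and the normal-subgroup dichotomy forced by simplicity of $\Sigma$ in the extension case---both of which you handle correctly.
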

	
	\begin{lemma}\label{lem:occ-sp4}
		Let $L$ be a finite extension of $\Ql$ for some prime $\l$, with ring of integers $\O$, uniformiser $\varpi$ and residue field $\F$. We have $\Occ(\GSp_4(\O)) = \Occ(\Sp_4(\O)) = \Occ(\PSp_4(\F))$.
	\end{lemma}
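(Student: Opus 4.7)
The plan is to prove the two equalities separately by exhibiting each of $\GSp_4(\O)$ and $\Sp_4(\O)$ as an extension whose ``extra'' pieces are abelian (so contribute nothing to $\Occ$), and then applying the second statement of \Cref{lem:serre-occ}.

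For the first equality, I would use the similitude character $\simil\:\GSp_4(\O)\to\O\t$, whose kernel is $\Sp_4(\O)$. The map is surjective (e.g.\ scalar matrices realise any similitude), so $\GSp_4(\O)$ is an extension of the abelian group $\O\t$ by $\Sp_4(\O)$. Since non-abelian simple groups do not occur in any abelian group, $\Occ(\O\t) = \emptyset$, and \Cref{lem:serre-occ} gives $\Occ(\GSp_4(\O)) = \Occ(\Sp_4(\O))$.

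For the second equality, I would first pass from $\Sp_4(\O)$ to $\Sp_4(\F)$. Write $\Sp_4(\O) = \varprojlim_n \Sp_4(\O/\varpi^n)$, and let $K_n \le \Sp_4(\O)$ be the kernel of reduction modulo $\varpi^n$. The quotient $\Sp_4(\O)/K_1 = \Sp_4(\F)$, and $K_1$ is a pro-$\ell$ group (since $\O/\varpi^n$ has residue characteristic $\ell$, the finite quotients $K_1/K_n$ are $\ell$-groups, with the successive quotients $K_n/K_{n+1}$ identified with the additive group of $\sp_4(\F)$). A pro-$\ell$ group has no non-abelian finite simple subquotients, so $\Occ(K_1) = \emptyset$, and another application of \Cref{lem:serre-occ} gives $\Occ(\Sp_4(\O)) = \Occ(\Sp_4(\F))$. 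Finally, $\Sp_4(\F)$ is a central extension of $\PSp_4(\F)$ by the abelian group $Z(\Sp_4(\F)) \subseteq \{\pm I\}$, so $\Occ(\Sp_4(\F)) = \Occ(\PSp_4(\F))$, completing the chain.

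The only point requiring care is the claim that $K_1$ is pro-$\ell$: this uses that in residue characteristic $\ell$, the congruence filtration has successive quotients isomorphic, as abelian groups, to the additive group of the Lie algebra $\sp_4(\O/\varpi)$, hence elementary abelian $\ell$-groups. I expect this to be the only mildly non-formal step, but it is standard and follows directly from expanding $(I + \varpi^n X)(I + \varpi^n Y) \equiv I + \varpi^n(X+Y) \pmod{\varpi^{n+1}}$ and checking that the symplectic condition $g^t J g = J$ cuts out exactly $\sp_4(\F)$ at each level.
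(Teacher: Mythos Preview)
Your proposal is correct and follows essentially the same route as the paper: both reduce $\Occ(\GSp_4(\O))$ to $\Occ(\Sp_4(\O))$ via the abelian quotient $\GSp_4(\O)/\Sp_4(\O)$, then to $\Occ(\Sp_4(\F))$ by showing the congruence kernel is (pro-)$\ell$, and finally to $\Occ(\PSp_4(\F))$ via the abelian centre.

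Two small remarks. First, your parenthetical that ``scalar matrices realise any similitude'' is false---the scalar $\lambda I$ has similitude $\lambda^2$, so scalars only hit squares. This does not matter for the argument: you only need that $\GSp_4(\O)/\Sp_4(\O)$ is abelian, which is immediate since it embeds in $\O^\times$ via $\simil$; surjectivity is irrelevant. (If you want surjectivity anyway, use $\diag(\nu,\nu,1,1)$ with respect to the paper's form $J$.) Second, where you invoke the Lie-algebra identification $K_n/K_{n+1}\cong\sp_4(\F)$ to see that $K_1$ is pro-$\ell$, the paper instead passes through the inverse-limit part of \Cref{lem:serre-occ} and then argues directly at each finite level that every element of $\ker(\Sp_4(\O/\varpi^n)\to\Sp_4(\F))$ has $\ell$-power order, via $(I+\varpi A)^{\ell^n}=I$ in $\M_4(\O/\varpi^n)$. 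Both are standard and equivalent; the paper's version avoids appealing to the filtration structure.
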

	
	\begin{proof}
		We argue as in \cite{kani-appendix}*{Lem.\ 10}. Since $\Gf(\O)/\Sp_4(\O)$ is abelian, the first equality follows from \Cref{lem:serre-occ}. Similarly, $ \Occ(\PSp_4(\F)) =  \Occ(\Sp_4(\F))$. Since $\Sp_4(\O) = \varprojlim_n\Sp_4(\O/\varpi^n)$, by \Cref{lem:serre-occ} again, we have $\Occ(\Sp_4(\O))= \bigcup_n\Occ(\Sp_4(\O/\varpi^n))$. It remains to show that $\Occ(\Sp_4(\O/\varpi^n)) = \Occ(\Sp_4(\F))$ for each $n$.
		
		Observe that the kernel $X$ of 
		\[\Sp_4(\O/\varpi^n)\to \Sp_4(\F)\]
		is an $\l$-group. Indeed, any matrix in the kernel can be written as $I + \varpi A$ where $A\in \M_4(\O/\varpi^n)$. Hence, in $\M_4(\O/\varpi^n)$, we have $(I+ \varpi A)^{\l^n} = I$, so every element of $X$ has order a power of $\l$.
		
		Since every $\l$-group is solvable, it follows that $\Occ(X) = \emptyset$, so $\Occ(\Sp_4(\O/\varpi^n)) = \Occ(\Sp_4(\F))$ by \Cref{lem:serre-occ}.
	\end{proof}
	
	\begin{corollary}\label{cor:occ}
		If $p\ne 2, \l$ and $q = p^r$ for some $r$, then $\PSp_4(\F_q)\notin \Occ(\Sp_4(\O))$.
	\end{corollary}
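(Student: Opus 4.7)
My plan is to derive a contradiction by comparing Sylow $p$-subgroups on both sides. By \Cref{lem:occ-sp4}, it is enough to show that $\PSp_4(\F_q)\notin \Occ(\PSp_4(\F))$. So I will suppose, for contradiction, that there exist (finite) subgroups $Y_1\lhd Y_2\le \PSp_4(\F)$ with $Y_2/Y_1\cong \PSp_4(\F_q)$, and let $P$ be a Sylow $p$-subgroup of $Y_2$.

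The first key observation is that a Sylow $p$-subgroup of $\PSp_4(\F_q)$ is the image of the unipotent radical of a Borel subgroup of $\Sp_4(\F_q)$: it has order $q^4$ and, since the sum of the two simple roots of type $C_2$ is itself a positive root, it is nilpotent of class exactly $2$, and in particular non-abelian. Applying the standard fact that $PY_1/Y_1$ is a Sylow $p$-subgroup of $Y_2/Y_1$, it follows that the image of $P$ in $Y_2/Y_1\cong \PSp_4(\F_q)$ is non-abelian.

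The second key observation is that, since $p$ is odd, $p\nmid |W(C_2)|=8$, and by hypothesis $p\ne \l=\mathrm{char}(\F)$; a standard structure theorem for finite groups of Lie type then asserts that every Sylow $p$-subgroup of $\PSp_4(\F)$ is conjugate into a maximal torus, and so is abelian. In particular $P$ is abelian, hence so is its quotient by $P\cap Y_1$ --- contradicting the previous paragraph.

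The main delicate point is the claim that the Sylow $p$-subgroups of $\PSp_4(\F)$ are abelian. I would either cite this from a standard reference on finite groups of Lie type (e.g.\ Carter or Malle--Testerman), or derive it directly by noting that every $p$-element of $\PSp_4(\F)$ is semisimple (having order coprime to $\l$), and since $p\nmid|W|$, any $p$-subgroup of $\PSp_4(\F)$ lies in a single (abelian) maximal torus.
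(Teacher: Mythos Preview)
Your argument is correct and takes a genuinely different route from the paper's. The paper simply invokes \Cref{lem:occ-sp4} to reduce to $\Occ(\PSp_4(\F))$ and then appeals to Mitchell's classification of the maximal subgroups of $\PSp_4(\F)$ \cite{mitchell1914subgroups}. Your approach avoids this classification by exploiting a structural invariant---abelian versus non-abelian Sylow $p$-subgroups---that separates cross-characteristic simple sections. The only outside input you need is that the Sylow $p$-subgroups of $\PSp_4(\F)$ are abelian for $p\nmid 2\l$, and this follows from the elementary observation that the full $p$-part of $|\Sp_4(\F)|=|\F|^4(|\F|-1)^2(|\F|+1)^2(|\F|^2+1)$ is already realised inside some maximal torus (a short case split on whether $p$ divides $|\F|-1$, $|\F|+1$, or $|\F|^2+1$, these being pairwise coprime away from $2$). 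The paper's route, by contrast, adapts more readily to situations where no such Sylow obstruction is available.

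Two minor corrections. First, the unipotent radical of a Borel in $\Sp_4$ has nilpotency class $3$, not $2$: the highest root $2\alpha_1+\alpha_2$ has height $3$, and $[U_{\alpha_1},U_{\alpha_1+\alpha_2}]$ lands non-trivially in $U_{2\alpha_1+\alpha_2}$. This does not affect your argument, since you only use non-abelianness. Second, your closing justification (``any $p$-subgroup lies in a single maximal torus because $p\nmid|W|$'') is stronger than needed and not entirely immediate; the order computation above already shows that some maximal torus contains a full Sylow $p$-subgroup, which is all you require.
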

	
	\begin{proof}
		By \Cref{lem:occ-sp4}, $\Occ(\Gf(\O)) = \Occ(\PSp_4(\F))$. The result is now immediate from the classification of the maximal subgroups of $\PSp_4(\F)$ \cite{mitchell1914subgroups}.
	\end{proof}
	
	Recall that $\G$ is the group scheme whose $R$ points are
	\[\G(R) = \set{(g, \nu)\in \GSp_4(\O_F\tensor_\Z R)\times R\t : \simil(g) = \nu^{k_1 + k_2 -3}}.\]
	
	Let
		\[\G^\circ(R) = \Sp_4(\O_F\tensor_\Z R).\]
	
	In particular, we have $\G^\circ(\Zl) = \prod_{\lambda\mid\l}\Sp_4(\O_{F_\lambda})$, where the product is over primes $\lambda$ of $F$ above $\l$, and $\O_{F_\lambda}$ is the ring of integers of the completion $F_\lambda$ of $F$.
	
	\begin{theorem}\label{thm:adelic-image-circ}
		Fix a set of primes $\LL$. Let $U^\circ$ be a closed compact subgroup of $\G^\circ(\widehat{\Q}_\LL)$ such that:
		\begin{itemize}
			\item for every prime $\l\in\LL$, the projection of $U^\circ$ to $\G^\circ(\Ql)$ is open in $\G^\circ(\Ql)$;
			\item for all but finitely many primes $\l\in\LL$, the projection of $U^\circ$ to $\G^\circ(\Ql)$ is $\G^\circ(\Zl)$. 
		\end{itemize}
		Then $U^\circ$ is open in $\G^\circ(\widehat{\Q}_\LL)$.
	\end{theorem}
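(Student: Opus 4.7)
The plan is to prove that $U^\circ$ coincides with the product $\prod_{\l \in \LL} V_\l$, where $V_\l := p_\l(U^\circ) \sub \G^\circ(\Ql)$ denotes the projection to the $\l$-th component. By hypothesis, each $V_\l$ is open in $\G^\circ(\Ql)$, and $V_\l = \G^\circ(\Zl)$ for every $\l$ outside some finite set $S \sub \LL$, which I enlarge so as also to contain the prime $2$. Hence $\prod_{\l \in \LL} V_\l$ is, by the very definition of the topology on $\G^\circ(\widehat{\Q}_\LL)$, already an open subgroup of $\G^\circ(\widehat{\Q}_\LL)$, so the theorem reduces to proving this equality.

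The core step is an induction on $|T|$ showing that for every finite $T \sub \LL$, the projection $\pi_T(U^\circ)$ equals $\prod_{\l \in T} V_\l$. The inductive step adjoins a new prime $\l_0$ to a finite set $T'$ for which the statement is already known, and applies Goursat's lemma (\Cref{lem:goursat}) to $H := \pi_{T' \cup \{\l_0\}}(U^\circ) \sub A \times B$ with $A := \prod_{\l \in T'} V_\l$ and $B := V_{\l_0}$; Goursat exhibits $H$ as the graph of an isomorphism $A/N_1 \xrightarrow{\sim} B/N_2$ between quotients by normal subgroups. By \Cref{lem:serre-occ} and \Cref{lem:occ-sp4}, every non-abelian simple subquotient of $A$ has the form $\PSp_4(\F_\lambda)$ for some $\lambda \mid \l \in T'$, and similarly every non-abelian simple subquotient of $B$ has the form $\PSp_4(\F_{\lambda'})$ for $\lambda' \mid \l_0$. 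When $\l_0$ and all $\l \in T'$ lie in $\LL \setminus S$, \Cref{cor:occ} forces these two families to be disjoint; combined with the perfectness of $\Sp_4(\F_q)$ for $q \ge 4$ (used to rule out abelian interference, exactly as in \Cref{cor:gl}), one concludes that $A/N_1 = B/N_2 = 1$, so $H = A \times B$. The finitely many primes in $S$ (including $\l = 2$) are then adjoined one at a time at the end, each step reducing to a direct finite group-theoretic check since $S$ is finite.

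Once surjectivity of every finite-truncation projection has been established, I finish by a standard topological argument: $U^\circ$ is compact and hence closed in the Hausdorff group $\G^\circ(\widehat{\Q}_\LL)$, so it is closed in $\prod_\l V_\l$; since $\prod_{\l \in \LL} V_\l = \varprojlim_T \prod_{\l \in T} V_\l$ in the profinite-product topology, any closed subgroup surjecting onto every finite truncation must fill the whole inverse limit. The main technical obstacle is the induction step at the primes in $S$, where the $\Occ$-disjointness of \Cref{cor:occ} does not apply directly; these finitely many exceptions are treated by the same style of analysis as in \cite{serre-abelian}*{Ch.\ IV} and \cite{Loeffler-adelic}*{Sec.\ 1}, leveraging that each $V_\l$ is already known to be open in $\G^\circ(\Ql)$ in order to carve out open subgroups of the correct size that fit into the adjoining process.
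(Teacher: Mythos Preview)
Your strategy has a genuine gap: the central claim that $U^\circ$ equals the full product $\prod_{\l\in\LL}V_\l$ of its projections is \emph{false} in general, so the step you defer as ``a direct finite group-theoretic check'' at the primes of $S$ cannot be completed.

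Here is a counterexample. Take $F=\Q$ and $\LL=\{2,3\}$, so $\G^\circ(\Zl)=\Sp_4(\Zl)$. Let $V_2=\Sp_4(\Z_2)$; since $\Sp_4(\F_2)\cong S_6$, the sign map gives a surjection $\phi_2\colon V_2\to\Z/2$. Let $V_3\subset\Sp_4(\Z_3)$ be the (open) preimage of a Sylow $2$-subgroup of $\Sp_4(\F_3)$; this too has a surjection $\phi_3\colon V_3\to\Z/2$. The fibre product
\[
U^\circ=\{(a,b)\in V_2\times V_3:\phi_2(a)=\phi_3(b)\}
\]
is closed, compact, and surjects onto each factor, so both hypotheses of the theorem hold (the second is vacuous for finite $\LL$). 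But $U^\circ$ has index $2$ in $V_2\times V_3$, so $U^\circ\neq\prod_\l V_\l$. Of course $U^\circ$ is still open, so the theorem is fine; it is your stronger reduction that fails. In your Goursat induction this failure appears exactly when you adjoin the second prime of $S$: the common quotient is $\Z/2$, not $1$. Your perfectness argument cannot kill it, because once a factor $V_\l$ that is not topologically perfect (such as $V_2$, or any proper open subgroup at an odd prime) sits inside $A$, the product $A$ is no longer perfect.

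The paper avoids this by aiming for a weaker target. Rather than proving $U^\circ=\prod_\l V_\l$, it shows that $U^\circ$ \emph{contains} an open subgroup. For $\l\notin S$ it proves the containment $\G^\circ(\Zl)\subset U^\circ$ directly, by setting $H_\l:=U^\circ\cap\G^\circ(\Zl)$ and using the $\Occ$ machinery to force $H_\l=\G^\circ(\Zl)$; note that this is genuinely stronger than surjection of $\pi_{\{\l\}}$, and your inductive surjectivity statements do not yield it. For the finitely many $\l\in S$ the paper uses an entirely different mechanism: pass to a finite-index subgroup so that each $\l$-projection is pro-$\l$, whence the $S$-projection is pro-nilpotent and therefore equals the product of its Sylow subgroups. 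This pro-nilpotent/Sylow step is precisely what absorbs the abelian entanglement illustrated above, and it is the missing idea in your proposal.
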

	
	\begin{proof}
		This is a generalisation of \cite{serre-abelian}*{Main Lemma, IV-19} and \cite{Loeffler-adelic}*{Thm.\ 1.2.2}. Let $S\sub \LL$ be a finite set of primes containing $\{2, 3,5\}\cap\LL$ and the finitely many primes such that the projection $U^\circ$ to $\G^\circ(\Ql)$ is not $\G^\circ(\Zl)$.
		
		First, note that the projection $U^\circ_S$ of $U^\circ$ to $\prod_{\l\in S}\G^\circ(\Ql)$ is open. We argue as in \cite{serre-abelian}*{Lem.\ 4 pp. IV-24}. Replacing $U^\circ$ with a finite index subgroup, we can assume that, for each $\l\in S$, the projection of $U^\circ$ to $\G^\circ(\Ql)$ is contained in the group of elements congruent to $1$ mod $\l$, i.e.\ is pro-$\l$. It follows that $U^\circ_S$ is pro-nilpotent, and hence is a product of its Sylow subgroups. Thus, $U^\circ_S\cong \prod_{\l\in S}U^\circ_\l$, where $U^\circ_\l$ is the projection of $U^\circ$ to $\G^\circ(\Ql)$. Since each $U^\circ_\l$ is open in $\G^\circ(\Ql)$ by assumption, it follows that $U^\circ_S$ is open in  $\prod_{\l\in S}\G^\circ(\Ql)$.
		
		To conclude, it is sufficient to show that $U^\circ$ contains an open subgroup of $\G^\circ(\widehat \Q_\LL)$. Since $U^\circ_S$ is open in  $\prod_{\l\in S}\G^\circ(\Ql)$, it is enough to show that $\prod_{\l\in\LL\setminus S}\G^\circ(\Zl)\sub U^\circ$. To show this, it is enough to show that $U^\circ$ contains $\G^\circ(\Zl) = \Sp_4(\O_F\tensor\Zl)$ for every $\l\in \LL\setminus S$. 
		
		For each $\l\in \LL\setminus S$, let $H_\l = U^\circ \cap \G^\circ(\Zl)$. By assumption, the projection of $U^\circ$ to $\G^\circ(\Ql)$ is $\G^\circ(\Zl)$, which in turn surjects onto $\PSp_4(\F_\lambda)$ for each $\lambda\mid\l$. Hence, $\PSp_4(\F_\lambda)\in \Occ(U^\circ)$. On the other hand, $U^\circ/H_\l$ is isomorphic to a closed subgroup of $\prod_{q\in \LL\setminus\{\l\}}\G^\circ(\Q_q)$, so by \Cref{cor:occ}, $\PSp_4(\F_\lambda)\notin\Occ(U^\circ/H_\l)$. It follows from \Cref{lem:serre-occ} that $\PSp_4(\F_\lambda)\in \Occ(H_\l)$. Hence, $H_\l$ is a subgroup of $\prod_{\lambda\mid\l}\Sp_4(\O_{F_\lambda})$ whose projection to $\prod_{\lambda\mid\l}\PSp_4(\F_\lambda)$ is surjective. It follows from \Cref{lem:ribet-product} that $H_\l = \prod_{\lambda\mid\l}\Sp_4(\O_{F_\lambda}) = \G^\circ(\Zl)$. The result follows.
	\end{proof}
	
	\begin{theorem}\label{thm:adelic-image}
		Fix a set of primes $\LL$. Let $U$ be a closed compact subgroup of $\G(\widehat{\Q}_\LL)$ such that:
		\begin{itemize}
			\item for every prime $\l\in\LL$, the projection of $U$ to $\G(\Ql)$ is open in $\G(\Ql)$;
			\item for all but finitely many primes $\l\in\LL$, the projection of $U$ to $\G(\Ql)$ is $\G(\Zl)$;
			\item the image of $U$ in $\widehat \Q_\LL\t$ is open.
		\end{itemize}
		Then $U$ is open in $\G(\widehat{\Q}_\LL)$. 
	\end{theorem}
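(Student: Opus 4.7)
The plan is to deduce Theorem \ref{thm:adelic-image} from Theorem \ref{thm:adelic-image-circ} by passing to the derived part $U^\circ := U \cap \G^\circ(\widehat\Q_\LL)$. The goal is to verify that $U^\circ$ satisfies the hypotheses of Theorem \ref{thm:adelic-image-circ}, conclude that $U^\circ$ is open in $\G^\circ(\widehat\Q_\LL)$, and combine this with the openness of $\nu(U)$ to deduce that $U$ itself is open.

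For the verification, I will use the fact that the quotient $\G/\G^\circ \cong \GL_1$ is abelian, which implies that every commutator $[u_1, u_2]$ with $u_i \in U$ lies in $U \cap \G^\circ(\widehat\Q_\LL) = U^\circ$. Consequently, the projection of $U^\circ$ to $\G^\circ(\Ql)$ contains $[U_\l, U_\l]$, where $U_\l$ is the projection of $U$ to $\G(\Ql)$. Since $U_\l$ is an open subgroup of the $\l$-adic Lie group $\G(\Ql)$ and $\Sp_4$ is perfect in characteristic zero, a Lie-theoretic argument shows that the commutator subgroup $[U_\l, U_\l]$ is open in $\G^\circ(\Ql) = [\G(\Ql), \G(\Ql)]$. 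For all but finitely many $\l$, where $U_\l = \G(\Zl)$, I will use the fact that $\Sp_4(\O_F \otimes \Zl)$ is perfect for $\l \ge 5$ to conclude that $[\G(\Zl), \G(\Zl)] = \G^\circ(\Zl)$, so the projection of $U^\circ$ to $\G^\circ(\Ql)$ equals $\G^\circ(\Zl)$. Applying Theorem \ref{thm:adelic-image-circ} then yields that $U^\circ$ is open in $\G^\circ(\widehat\Q_\LL)$.

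For the final step, I will use the continuous group-theoretic section $s\: \widehat\Q_\LL^\times \to \G(\widehat\Q_\LL)$ given by $q \mapsto (\operatorname{diag}(q^{k_1+k_2-3}, 1, 1, q^{k_1+k_2-3}), q)$, which splits $\nu$ and exhibits $\G(\widehat\Q_\LL)$ as a topological semidirect product $\G^\circ(\widehat\Q_\LL) \rtimes \widehat\Q_\LL^\times$. Since $\nu(U)$ is open in $\widehat\Q_\LL^\times$, the preimage $\nu^{-1}(\nu(U)) = U \cdot \G^\circ(\widehat\Q_\LL)$ is an open subgroup of $\G(\widehat\Q_\LL)$ containing $U$, and there is a natural $\G^\circ$-equivariant bijection of coset spaces $\nu^{-1}(\nu(U))/U \cong \G^\circ(\widehat\Q_\LL)/U^\circ$, whose right-hand side is discrete because $U^\circ$ is open. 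The main technical obstacle is promoting this set-theoretic bijection to a homeomorphism, from which it follows that $U$ is open in $\nu^{-1}(\nu(U))$ and hence in $\G(\widehat\Q_\LL)$. I expect to accomplish this by combining the continuity of $s$ with the fact that each local factor $\G(\Ql)$ is an $\l$-adic Lie group, so the projection $\nu|_U\: U \to \nu(U)$ admits continuous local sections near the identity, allowing the openness of $U$ to be assembled from the established openness of $U^\circ$ in $\G^\circ(\widehat\Q_\LL)$ and of $\nu(U)$ in $\widehat\Q_\LL^\times$.
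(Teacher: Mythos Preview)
Your proposal is correct and follows essentially the same strategy as the paper: set $U^\circ = U \cap \G^\circ(\widehat\Q_\LL)$, use that the quotient $\G/\G^\circ$ is abelian so that $U^\circ$ contains the (closure of the) commutator subgroup of $U$, verify the two hypotheses of Theorem~\ref{thm:adelic-image-circ} via perfectness of $\Sp_4$, and then combine openness of $U^\circ$ with openness of $\nu(U)$ to conclude.

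The only place you diverge is the final deduction, and there you are making it substantially harder than necessary. The paper simply asserts that $U \supseteq U^\circ$ open in $\G^\circ(\widehat\Q_\LL)$ together with $\nu(U)$ open in $\widehat\Q_\LL^\times$ forces $U$ to be open. The clean justification uses compactness rather than sections: since $U$ is compact, after conjugating at finitely many places one has $U \subseteq \prod_\l \G(\Zl)$; then $U^\circ$ is open in the compact group $\prod_\l \G^\circ(\Zl)$, hence of finite index, and likewise $\nu(U)$ has finite index in $\prod_\l \Zl^\times$. The multiplicativity of index in the exact sequence $1 \to \prod_\l \G^\circ(\Zl) \to \prod_\l \G(\Zl) \to \prod_\l \Zl^\times \to 1$ gives $[\prod_\l \G(\Zl) : U] < \infty$, and a closed finite-index subgroup of a compact group is open. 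Your route via the section $s$ and continuous local sections of $\nu|_U$ would also succeed (surjections of profinite groups do admit continuous set-theoretic sections), but it is considerably more elaborate than this two-line index count and introduces a ``technical obstacle'' that is not really there.
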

	
	\begin{proof}
		We follow \cite{Loeffler-adelic}*{Thm.\ 1.2.3}. Let $U^\circ = U \cap \G^\circ(\widehat\Q_\LL)$. We claim that $U^\circ$ satisfies the hypotheses of \Cref{thm:adelic-image-circ}. Since $\G(\widehat\Q_\LL)/\G^\circ(\widehat\Q_\LL)\cong \widehat\Q_\LL\t$ is abelian, the group $U^\circ$ contains the closure of the commutator subgroup of $U$. When $\l\ge 3$, $\Sp_4(\O_F\tensor\Zl)$ is the closure of its commutator subgroup. Hence, if $\l\ge 3$ and $U$ surjects onto $\G(\Zl)$, then $U^\circ$ surjects onto $\G^\circ(\Zl) =\Sp_4(\O_F\tensor\Zl)$. When $\l=2$ the commutator subgroup of $\Sp_4(\O_F\tensor\Z_2)$ still has finite index. Hence, $U^\circ$ satisfies the hypotheses of \Cref{thm:adelic-image-circ}. Thus, $U$ contains an open subgroup of $\G^\circ(\widehat\Q_\LL)$. Since the image of $U$ in $\widehat \Q_\LL\t\cong \G(\widehat\Q_\LL)/\G^\circ(\widehat\Q_\LL)$ is open, it follows that $U$ is open in $\G(\widehat{\Q}_\LL)$.
	\end{proof}
	
	\begin{proof}[Proof of Theorem $\ref{thm:precise-image}~(iii)$]
	 	The result now follows immediately from \Cref{thm:precise-image} parts $(i)$ and $(ii)$ and \Cref{thm:adelic-image}.
	\end{proof}
	
	\subsection{The image of \texorpdfstring{$\Ga\Q$}{Galois over Q}}
	
	In \Cref{thm:precise-image}, we computed the image of $\wrho_\l$ for all but finitely many primes $\l\in\LL$. Suppose that $\l\in\LL$ is such that the image of $\wrho_\l$ is surjective. In this section, we use methods of E.\ Papier (see \cite{Ribet85}*{Thm.\ 4.1}) to compute the image of
	\[\rho_\l\:\Ga\Q\to \Gf(\O_E\tensor_\Z\Zl).\]
	Suppose that $(\sigma, \chi_\sigma)$ is an inner twist, and let $\sigma(\rho_\l)$ denote the representation obtained by composing $\rho_\l$ with $\sigma\:\O_E\tensor_\Z\Zl\to \O_E\tensor_\Z\Zl$. Then the representations $\sigma(\rho_\l)$ and $\rho_\l\tensor\chi_\sigma$ have the same trace. Since both are semisimple, it follows that they are isomorphic, so there is a matrix $X\in \Gf(E\tensor_\Q\Ql)$ such that
	\[X\sigma(\rho_\l)X\ii = \rho_\l\tensor\chi_\sigma.\]
    By definition, $\chi_\sigma|_K$ is trivial and $\sigma(\rho_\l|_K) = \rho_\l|_K$. It follows that $X$ commutes with the image of $\rho_\l|_K$. Since, for example, the image of $\rho_\l|_K$ contains $\Sp_4(\O_F\tensor_\Z\Zl)$, we see that $X$ is a scalar matrix, so there is an equality of matrices
    \[\sigma(\rho_\l(\gamma)) = \rho_\l(\gamma)\chi_\sigma(\gamma)\]
	for all $\gamma \in \Ga\Q$ and for all inner twists $(\sigma, \chi_\sigma)$.
	
	Recall that the group structure on the inner twists is given by 
	\[(\sigma, \chi_\sigma)\cdot(\tau, \chi_\tau) = (\sigma\tau, \chi_\sigma\cdot \sigma(\chi_\tau)),\]
where we view $\sigma, \tau$ as elements of $\Gamma = \Gal(E/F)$.
    
	Hence, for each $\gamma\in\Ga\Q$, the map $\sigma\mapsto\chi_\sigma(\gamma)$ defines an element of $H^1(\Gal(E/F),E\t)$. By Hilbert's theorem $90$, this cohomology group is trivial, so we can choose $\alpha(\gamma)\in E\t$ such that
	\[\chi_\sigma(\gamma) = \frac{\sigma(\alpha(\gamma))}{\alpha(\gamma)}\]
	for all $\sigma\in\Gamma$. Moreover, we can choose the elements $\alpha(\gamma)$ to be independent of $\l$, and so that $\alpha(\gamma)$ only depends on the image of $\gamma$ in $\Gal(K/\Q)$. Thus, there are exactly $[K:\Q]$ numbers $\alpha(\gamma)$ and, when $\l$ is large enough, we will have $\alpha(\gamma)\in \O_E\tensor_\Z\Zl$ for all $\gamma$. We deduce the following generalisation of \cite{Ribet85}*{Thm.\ 4.1}:
	
	\begin{theorem}\label{thm:precise-Q-image}
	    For all but finitely many primes $\l\in\LL$, the image of $\rho_\l$ is generated by 
	    \[G_\l = \set{g\in\Gf(\O_F\tensor_\Z\Zl) : \simil(g)\in \Zl^{\times(k_1 + k_2 -3)}}\]
	    together with the finite set of matrices
	    \[\dmat{\alpha(\gamma)}{\alpha(\gamma)}{\varepsilon(\gamma)/\alpha(\gamma)}{\varepsilon(\gamma)/\alpha(\gamma)}\]
	    where $\varepsilon$ is the central character of the automorphic representation $\pi$ associated to $f$.
	\end{theorem}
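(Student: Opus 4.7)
The plan is to show that, for every sufficiently large $\l\in\LL$ and every $\gamma\in\Ga\Q$ projecting to $\overline\gamma\in\Gal(K/\Q)$, there is a factorisation $\rho_\l(\gamma) = D(\overline\gamma)\cdot g_\gamma$ with $g_\gamma\in G_\l$, where $D(\overline\gamma)$ denotes the diagonal matrix displayed in the theorem. By \Cref{thm:precise-image}~(ii), $\wrho_\l(\Gal(\Qb/K))=\G(\Zl)$ for almost all $\l\in\LL$, and the projection of $\G(\Zl)$ to the first coordinate is $G_\l$, so $\rho_\l|_K$ surjects onto $G_\l$. The factorisation above therefore identifies the image of $\rho_\l$ with the subgroup generated by $G_\l$ together with the finite collection $\set{D(\overline\gamma):\overline\gamma\in\Gal(K/\Q)}$.

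The heart of the argument is the equivariance identity $\sigma(D(\overline\gamma)) = \chi_\sigma(\gamma)\,D(\overline\gamma)$ for every inner twist $\sigma\in\Gamma$. Using the relation $\chi_\sigma^2 = \sigma(\varepsilon)\varepsilon\ii$ (derived in the proof of \cite{Ribet-twists}*{Prop.\ 3.2} in \Cref{sec:inner-twists}) together with the defining relation $\chi_\sigma(\gamma) = \sigma(\alpha(\overline\gamma))/\alpha(\overline\gamma)$, the diagonal entries transform as $\sigma(\alpha) = \alpha\chi_\sigma$ and $\sigma(\varepsilon/\alpha) = (\varepsilon\chi_\sigma^2)/(\alpha\chi_\sigma) = (\varepsilon/\alpha)\chi_\sigma$, which gives the claimed identity. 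Combined with the matrix identity $\sigma(\rho_\l(\gamma)) = \rho_\l(\gamma)\chi_\sigma(\gamma)$ established immediately before the theorem, this forces $D(\overline\gamma)\ii\rho_\l(\gamma)$ to be fixed by every $\sigma\in\Gamma$, so it lies in $\Gf(F\tensor\Ql)$.

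It remains to check similitude and integrality so that $D(\overline\gamma)\ii\rho_\l(\gamma)\in G_\l$. A short calculation with the fixed antidiagonal form of $J$ gives $\simil(D(\overline\gamma)) = \alpha(\overline\gamma)\cdot\varepsilon(\gamma)/\alpha(\overline\gamma) = \varepsilon(\gamma)$, so the similitude of $D(\overline\gamma)\ii\rho_\l(\gamma)$ equals $\chi_{\mathrm{cyc}}(\gamma)^{k_1+k_2-3}\in\Zl^{\times(k_1+k_2-3)}$. For integrality, since the finitely many global values $\alpha(\overline\gamma)\in E\t$ and $\varepsilon(\gamma)\in\O_E\t$ (roots of unity) are $\l$-adic units for all $\l$ outside a finite set independent of $\gamma$, both $D(\overline\gamma)$ and $D(\overline\gamma)\ii$ lie in $\Gf(\O_E\tensor\Zl)$. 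Hence $D(\overline\gamma)\ii\rho_\l(\gamma)\in\Gf(\O_E\tensor\Zl)\cap\Gf(F\tensor\Ql)=\Gf(\O_F\tensor\Zl)$ for $\l$ unramified in $F$, placing it in $G_\l$. The main technical step is recognising the specific form of $D(\overline\gamma)$ that yields the equivariance identity; the Hilbert~$90$ ambiguity in the choice of $\alpha(\overline\gamma)$ is harmless, since replacing $\alpha(\overline\gamma)$ by $\alpha(\overline\gamma)\beta$ for $\beta\in F\t$ multiplies $D(\overline\gamma)$ by $\dmat{\beta}{\beta}{\beta\ii}{\beta\ii}\in\Sp_4(\O_F\tensor\Zl)\subseteq G_\l$, leaving the generated subgroup unchanged.
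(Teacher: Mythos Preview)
Your proof is correct and follows essentially the same approach as the paper: both arguments combine the matrix identity $\sigma(\rho_\l(\gamma)) = \rho_\l(\gamma)\chi_\sigma(\gamma)$ with the Hilbert~90 relation $\chi_\sigma(\gamma) = \sigma(\alpha(\gamma))/\alpha(\gamma)$ to show that $D(\overline\gamma)\ii\rho_\l(\gamma)$ is $\Gamma$-invariant and hence lies in $G_\l$, then invoke \Cref{thm:precise-image}(ii). The only cosmetic difference is that the paper first shows $\alpha(\gamma)\ii\rho_\l(\gamma)\in\Gf(\O_F\tensor\Zl)$ directly and then factors out the remaining diagonal piece, whereas you verify the equivariance $\sigma(D) = \chi_\sigma D$ entrywise (using $\chi_\sigma^2 = \sigma(\varepsilon)\varepsilon\ii$) before dividing; your additional remark on the Hilbert~90 ambiguity is a nice touch not present in the paper.
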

	
	\begin{proof}
	    By definition, for all $\gamma\in \Ga\Q$  and all $\sigma\in \Gamma$, we have
	    \[\rho_\l(\gamma)\alpha(\gamma)\ii = \rho_\l(\gamma)\chi_\sigma(\gamma)\sigma(\alpha(\gamma))\ii = \sigma(\rho_\l(\gamma)\alpha(\gamma)\ii).\]
	    Hence, $\rho_\l(\gamma)\alpha(\gamma)\ii\in \Gf(\O_F\tensor_\Z\Zl)$. Taking similitudes, we see that $\varepsilon(\gamma)\alpha(\gamma)^{-2}\in(\O_F\tensor_\Z\Zl)\t$.
	    
	    Moreover, we have an equality
	    \[\rho_\l(\gamma) = \dmat{\alpha(\gamma)}{\alpha(\gamma)}{\varepsilon(\gamma)/\alpha(\gamma)}{\varepsilon(\gamma)/\alpha(\gamma)}\br{\dmat{1}{1}{\alpha(\gamma)^2/\varepsilon(\gamma)}{\alpha(\gamma)^2/\varepsilon(\gamma)} \alpha(\gamma)\ii\rho_\l(\gamma)},\]
	    and the product in the second set of brackets belongs to $G_\l$. The result now follows from \Cref{thm:precise-image}.
	\end{proof}
	
	\begin{corollary}\label{cor:precise-Q-image}
	    For all but finitely many primes $\l\in\LL$, the image of $\rho_\l$ is the disjoint union of at most $[K:\Q]$ cosets
	    \[\coprod\dmat{\alpha(\gamma)}{\alpha(\gamma)}{\varepsilon(\gamma)/\alpha(\gamma)}{\varepsilon(\gamma)/\alpha(\gamma)}G_\l,\]
	    where $\gamma$ ranges over some subset of $\Gal(K/\Q)$.
	\end{corollary}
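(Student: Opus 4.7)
The plan is to identify each element $\rho_\l(\gamma)$ in the image with a specific coset of $G_\l$ indexed by the class of $\gamma$ in $\Gal(K/\Q)$, and then invoke the elementary fact that cosets of a subgroup are either equal or disjoint.

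Write $D(\gamma) = \dmat{\alpha(\gamma)}{\alpha(\gamma)}{\varepsilon(\gamma)/\alpha(\gamma)}{\varepsilon(\gamma)/\alpha(\gamma)}$ for the matrix appearing in the statement. My first step is to observe that $D(\gamma)$ depends only on the image of $\gamma$ in $\Gal(K/\Q)$. By the construction of $\alpha$ given just before \Cref{thm:precise-Q-image}, $\alpha(\gamma)$ was chosen to factor through $\Gal(K/\Q)$ by design. By \Cref{rem:character-non-trivial}, the group $H = \Gal(\Qb/K)$ is contained in $\ker\varepsilon$, so $\varepsilon$ also factors through $\Gal(K/\Q)$. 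Hence at most $[K:\Q]$ distinct matrices $D(\gamma)$ arise as $\gamma$ varies over $\Ga\Q$.

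Next, for the inclusion of $\rho_\l(\Ga\Q)$ into $\bigcup_\gamma D(\gamma) G_\l$, I would invoke the factorisation
\[\rho_\l(\gamma) = D(\gamma)\cdot \Bigl(\dmat{1}{1}{\alpha(\gamma)^2/\varepsilon(\gamma)}{\alpha(\gamma)^2/\varepsilon(\gamma)}\, \alpha(\gamma)^{-1}\rho_\l(\gamma)\Bigr)\]
displayed in the proof of \Cref{thm:precise-Q-image}, whose second factor lies in $G_\l$. For the reverse inclusion, \Cref{thm:precise-Q-image} directly provides that both $G_\l$ and each $D(\gamma)$ lie in the image of $\rho_\l$, so each product $D(\gamma)\cdot g$ with $g \in G_\l$ does as well.

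Finally, since cosets of the subgroup $G_\l$ are pairwise either equal or disjoint, choosing one representative $\gamma \in \Gal(K/\Q)$ per distinct coset $D(\gamma) G_\l$ yields the claimed disjoint decomposition, with at most $[K:\Q]$ cosets. No serious obstacle is anticipated: the corollary is essentially bookkeeping on top of \Cref{thm:precise-Q-image}, and the only point requiring care is the verification that both $\alpha$ and $\varepsilon$ descend to $\Gal(K/\Q)$.
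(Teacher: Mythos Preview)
Your proposal is correct and matches the paper's (implicit) approach: the corollary is stated in the paper without proof, as a direct consequence of \Cref{thm:precise-Q-image}, and you have simply spelled out the bookkeeping. The only minor point worth noting is that the phrasing of \Cref{rem:character-non-trivial} in the paper is slightly ambiguous, but your interpretation that $H = \Gal(\Qb/K) \subseteq \ker\varepsilon$ is the correct one (since $H = \bigcap_\sigma \ker\chi_\sigma$ and one of the $\chi_\sigma$ is $\varepsilon^{-1}$).
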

	
	\section{The Chebotarev density theorem}\label{sec:cdt}
	
	Let $K$ be a number field and let $L/K$ be a Galois extension with Galois group $G$. Assume that $L$ is unramified outside a finite set of primes. Let $M_K$ denote the set of primes of $K$ and, for each prime $\p\in M_K$ that is unramified in $L$, let $\Frob_\p\in G$ be a choice of Frobenius element. Let $C$ be a non-empty subset of $G$ that is stable under conjugation. 
	
	\begin{definition}\label{def:pi_c}
	    For any $x> 0$, define
	    \[\pi_C(x, L/K) := \#\set{\p\in M_K, \p\text{ unramified in }L : N_{K/\Q}(\p) \le x, \Frob_\p\in  C}.\]
	\end{definition}
	The Chebotarev density theorem states that
	\[\pi_C(x, L/K) \sim \frac{|C|}{|G|}\pi(x).\]
	To obtain explicit bounds on the size of $\pi_C(x, L/K)$, we will require an effective version of the Chebotarev density theorem. 
	
	\subsection{Unconditional effective Chebotarev}
	
	The following theorem is unconditional:
	
	\begin{theorem}[\cite{lmo}*{Thm.\ 1.4}]\label{thm:unconditional-cdt}
	    Assume that $L/K$ is finite. There exist constants $c_1, c_2$ such that, if 
	    \[\log x > c_1 (\log |\disc(L/\Q)|)(\log\log |\disc(L/\Q)|)(\log\log\log |6\disc(L/\Q)|),\]
	    then 
	    \[\pi_C(x, L/K) \le c_2\frac{|C|}{|G|}\Li(x),\]
	    where $\Li(x)=\int_2^x \frac{dt}{\log t}$ is the logarithmic integral function. 
	\end{theorem}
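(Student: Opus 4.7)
The plan is to follow the approach of Lagarias, Montgomery and Odlyzko, which combines Deuring's reduction to abelian extensions with explicit analytic estimates for Hecke $L$-functions.

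First, I would apply Deuring's trick to reduce the problem to abelian subextensions. For a fixed element $g \in C$, let $H = \langle g\rangle$ be the cyclic subgroup it generates, and let $M = L^H$ be its fixed field. A prime $\mathfrak{p}$ of $K$ with $\Frob_{\mathfrak{p}} \in C$ corresponds, up to the combinatorial factor $|C|/|G|$, to a prime $\mathfrak{q}$ of $M$ whose Frobenius in the abelian extension $L/M$ is exactly $g$. Summing over representatives $g \in C$, it suffices to estimate, for each character $\chi$ of $\Gal(L/M)$, the weighted prime counting function $\pi(x,\chi) = \sum_{N\mathfrak{q}\le x}\chi(\mathfrak{q})$, and then pass to a finite sum over the characters.

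Next, I would invoke the explicit formula for each Hecke $L$-function $L(s,\chi)$, which expresses $\pi(x,\chi)$ as a main term plus a sum $-\sum_\rho x^\rho/\rho$ over the non-trivial zeros $\rho$ of $L(s,\chi)$, along with a manageable error. The estimation of this sum rests on two analytic inputs: (i) a Stark-type zero-free region of the form $\sigma > 1 - c/\log\big(|\disc(L/\Q)|(|t|+2)\big)$, admitting at most one possible real exceptional (Siegel) zero $\beta_0$; and (ii) a log-free zero density estimate combined with the Deuring--Heilbronn phenomenon, which asserts that if such a $\beta_0$ exists and lies close to $1$, then every other character's $L$-function has a correspondingly enlarged zero-free region. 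Combining these estimates controls the sum over zeros uniformly across the whole family of characters.

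The main obstacle is handling the possible Siegel zero $\beta_0$. The Page--Siegel lower bound $1-\beta_0 \gg_\epsilon |\disc(L/\Q)|^{-\epsilon}$ is too weak on its own to yield the theorem for $x$ merely polynomial in $|\disc(L/\Q)|$; the Deuring--Heilbronn compensation rescues this, but becomes effective only once $\log x$ surpasses the triple-log threshold, which is precisely the hypothesis in the statement. Finally, one observes that the ramified primes of $K$ in $L$, of which there are only $O(\log|\disc(L/\Q)|)$, contribute negligibly compared to $\Li(x)$ under this hypothesis, so that assembling the pieces yields the bound $\pi_C(x, L/K) \le c_2 \frac{|C|}{|G|}\Li(x)$.
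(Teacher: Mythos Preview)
The paper does not supply its own proof of this theorem: it is quoted verbatim as \cite{lmo}*{Thm.\ 1.4} and used as a black box, so there is no argument in the paper to compare your proposal against. Your sketch is a reasonable outline of the Lagarias--Montgomery--Odlyzko proof itself---Deuring's reduction to cyclic subextensions, the explicit formula for the associated Hecke $L$-functions, the Stark-type zero-free region, and the log-free zero density estimate together with the Deuring--Heilbronn repulsion to control a possible exceptional zero---so in that sense it matches the ``proof'' the paper intends, namely a reference to \cite{lmo}.
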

	
	Using this theorem, Serre \cite{ser81} shows how to deduce upper bounds for $\pi_C(x, L/K)$ assuming that $G=\Gal(L/K)$ is a compact $\l$-adic Lie group. Indeed, let $G$ be a compact $\l$-adic Lie group of dimension $D$ and let $C\sub G$ be a non-empty closed subset that is stable under conjugation.
	
	\begin{definition}[\cite{ser81}*{Sec.\ 3}]
	    Let $C_n$ denote the image of $C$ in $G/\l^nG$. We say that $C$ has \emph{Minkowski dimension} $\dim_M(C) \le d$ if $|C_n| = O(\l^{nd})$ as $n\to \infty$.
	\end{definition}
	
	Now, if $s\in C$, then the centraliser $Z_G(s)$ of $s$ is a closed Lie subgroup of $G$, so has a well-defined dimension.
	
	\begin{theorem}[\cite{ser81}*{Thm.\ 12}]\label{thm:cdt-serre}
	    Suppose that $\dim_M(C) \le d$, and set
	    \[r_C = \Inf_{s\in C}\dim\frac{G}{Z_G(s)}.\]
	    Then, for any $\epsilon >0$,
	    \[\pi_C(x, L/K) \ll \frac{x}{\log(x)^{1 + \alpha - \epsilon}},\]
	    where $\alpha = (D-d)/(D-r_C/2)$.
	\end{theorem}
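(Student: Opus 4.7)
The plan is to apply the unconditional effective Chebotarev density theorem (\Cref{thm:unconditional-cdt}) to a family of finite quotients of $L/K$, and to optimise over the choice of quotient. For each positive integer $n$, let $U_n$ be the open normal subgroup of $G$ of index $\asymp \l^{nD}$ given by the natural $\l$-adic Lie filtration, and let $L_n \sub L$ denote its fixed field, so that $\Gal(L_n/K) \cong G_n := G/U_n$. Let $C_n$ denote the image of $C$ in $G_n$. Since $C$ is conjugation-invariant, so is $C_n$, and we have the trivial bound $\pi_C(x, L/K) \le \pi_{C_n}(x, L_n/K)$.

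The first step is to estimate the discriminant of $L_n/\Q$. Since $L_n/K$ is unramified outside $\l$ and a fixed finite set of primes coming from $L/K$ and $K/\Q$, and since the ramification at $\l$ is controlled by the $\l$-adic Lie structure of $G_n$, standard bounds on the higher ramification filtration give $\log|\disc(L_n/\Q)| \ll n\,\l^{nD}$. Substituting this into \Cref{thm:unconditional-cdt}, the hypothesis on $\log x$ is satisfied provided $\l^{nD}$ is bounded by $\log x$ up to factors of $\log\log x$. Combining the resulting Chebotarev bound $\pi_{C_n}(x,L_n/K) \ll (|C_n|/|G_n|)\,\Li(x)$ with the Minkowski hypothesis $|C_n| \ll \l^{nd}$ yields $\pi_C(x,L/K) \ll (x/\log x)\cdot \l^{-n(D-d)}$. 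Taking $\l^n$ maximal subject to the constraint then produces a bound of the same shape as in the theorem, but with the weaker exponent $\alpha' = (D-d)/D$.

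The main obstacle, and the substance of the argument, is improving this to the sharper exponent $\alpha = (D-d)/(D - r_C/2)$. This improvement comes from applying effective Chebotarev not to the full quotient $L_n/K$ but to a strictly smaller intermediate extension $L_n'$ that still detects whether $\Frob_\p$ lies in $C_n$, and whose discriminant satisfies the sharper bound $\log|\disc(L_n'/\Q)| \ll n\,\l^{n(D - r_C/2)}$. The construction of $L_n'$ exploits the centraliser hypothesis: since each element of $C$ has conjugation orbit of dimension at least $r_C$, one can cut down the effective rank of the relevant quotient by $r_C/2$, roughly speaking because every orbit contributes a transversal of the corresponding dimension. Substituting this sharper discriminant bound into \Cref{thm:unconditional-cdt} and re-optimising the choice of $n$ then produces the claimed exponent $\alpha = (D-d)/(D - r_C/2)$.
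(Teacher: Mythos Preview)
The paper does not prove this statement; it is cited from \cite{ser81} as a black box. Your first two paragraphs correctly recover the cruder exponent $(D-d)/D$: pass to congruence quotients $G_n$, use $\log|\disc(L_n/\Q)|\ll n\l^{nD}$, apply \Cref{thm:unconditional-cdt}, and optimise $n$ subject to $\l^{nD}\ll(\log x)^{1-\epsilon}$. This is indeed the skeleton of Serre's argument.

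The third paragraph has a genuine gap. You claim the improvement to $(D-d)/(D-r_C/2)$ arises from an intermediate field $K\subset L_n'\subset L_n$ with $\log|\disc(L_n'/\Q)|\ll n\l^{n(D-r_C/2)}$ that still detects $C_n$, but you do not construct $L_n'$, and in general no such field exists. An intermediate field Galois over $K$ corresponds to a normal subgroup $H\trianglelefteq G_n$; for the image of $C_n$ in $G_n/H$ to retain the density $\l^{-n(D-d)}$, the set $C_n$ must be a union of $H$-cosets, and nothing about $r_C$ forces $G_n$ to contain a normal subgroup of order $\asymp\l^{nr_C/2}$ with that property --- already for semisimple $G$ the only normal subgroups of $G_n$ are congruence kernels and the centre, whose orders are unrelated to $r_C$. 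The phrase about orbits contributing ``a transversal of the corresponding dimension'' is not a construction. In Serre's proof the saving of $r_C/2$ does not come from passing to a smaller Galois extension of $K$; it comes from a sharper version of the effective Chebotarev input, in which the threshold on $\log x$ is governed by a quantity of order $n\l^{n(D-r_C/2)}$ rather than $n\l^{nD}$. Obtaining this requires exploiting that every conjugacy class in $C_n$ has size at least $\l^{nr_C}$, and the square root enters when this is balanced against the degree and conductor data; it is not a consequence of \Cref{thm:unconditional-cdt} as stated. Your sketch does not supply this step.
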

	
	\subsection{Chebotarev density under the Generalised Riemann Hypothesis}
	
	Assuming that the Generalised Riemann Hypothesis holds for $L$, there are stronger effective versions of the Chebotarev density theorem. The following version is due to Lagarias--Odlyzko \cite{lo}.
	
	\begin{theorem}[\cite{ser81}*{Thm.\ 4}]\label{thm:conditional-cdt}
	    Assume that the Dedekind zeta function $\zeta_L(s)$ 
		satisfies the Riemann Hypothesis. Then 
		\[\pi_C(x, L/K)= \frac{|C|}{|G|}\Li(x)+O\bigg(\frac{|C|}{|G|}x^{{1}/{2}}(\log |\disc(L/\Q)|+[L:\Q]\log x)\bigg).\]
	\end{theorem}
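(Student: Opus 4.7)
The plan is to follow the classical approach of Lagarias--Odlyzko via the explicit formula for Artin $L$-functions. I would first introduce the weighted counting function
\[\psi_C(x, L/K) := \sum_{\substack{\p\text{ prime of }K,\, m\ge 1\\ N_{K/\Q}\p^m\le x}} 1_C(\Frob_\p^m)\log N_{K/\Q}\p.\]
Using orthogonality of characters of $G = \Gal(L/K)$, expand $1_C(g) = \frac{|C|}{|G|}\sum_\chi \overline{\chi(s_C)}\chi(g)$ for some fixed $s_C\in C$, which yields
\[\psi_C(x, L/K) = \frac{|C|}{|G|}\sum_\chi \overline{\chi(s_C)}\,\psi(x,\chi),\]
where $\chi$ ranges over the irreducible characters of $G$ and $\psi(x,\chi)$ is the weighted sum attached to the Artin $L$-function $L(s,\chi, L/K)$ via Perron's formula applied to $-L'/L$.

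The analytic input is the explicit formula. Since $\zeta_L(s) = \prod_\chi L(s,\chi, L/K)^{\chi(1)}$, GRH for $\zeta_L$ implies the Riemann hypothesis for each Artin $L$-function (with analytic continuation and entirety on $\Re s = 1/2$ secured by Brauer induction, which writes each $\chi$ as an integral combination of characters induced from one-dimensional characters of subgroups, so that $L(s,\chi, L/K)$ decomposes into a product of Hecke $L$-functions). Under GRH, combining the explicit formula with Landau's zero-counting estimate gives
\[\psi(x,\chi) = \delta_\chi\, x + O\!\left(x^{1/2}\log x\,(\log\mathfrak{f}_\chi + \chi(1)\log x)\right),\]
where $\delta_\chi$ is $1$ if $\chi$ is trivial and $0$ otherwise, and $\mathfrak{f}_\chi$ is the Artin conductor of $\chi$. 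Summing over $\chi$ and applying the conductor--discriminant formula $\log|\disc(L/\Q)| = \sum_\chi \chi(1)\log\mathfrak{f}_\chi + [K:\Q]\log|\disc(K/\Q)|$ consolidates the individual errors into the claimed bound for $\psi_C$. Partial summation then converts $\psi_C$ into $\pi_C$: the main term $\frac{|C|}{|G|}x$ becomes $\frac{|C|}{|G|}\Li(x)$, and prime powers $\p^m$ with $m\ge 2$ contribute only $O(x^{1/2})$, which is absorbed into the error.

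The principal technical obstacle is making the truncation of the zero sum in the explicit formula uniform in $\chi$: one picks a height $T\asymp x^{1/2}$ and balances the contribution of the retained zeros against the tail with $|\Im\rho|>T$, estimated via Landau's density bound. A secondary subtlety is verifying that GRH for $\zeta_L$ does indeed imply the Riemann hypothesis for each individual $L(s,\chi, L/K)$ rather than merely for the product; this reduction proceeds via Brauer induction to Hecke $L$-functions over intermediate cyclic subextensions of $L$, where the analytic properties are classical.
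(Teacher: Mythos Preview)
The paper does not prove this theorem; it is quoted as a known result of Lagarias--Odlyzko (in the form stated by Serre) and used as a black box. Your sketch is the standard Lagarias--Odlyzko argument and is correct in outline, so there is nothing to compare.

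Two minor remarks on the sketch itself. First, the expansion $1_C(g) = \tfrac{|C|}{|G|}\sum_\chi \overline{\chi(s_C)}\chi(g)$ holds only when $C$ is a single conjugacy class; for a general conjugation-stable $C$ one reduces by additivity over the classes contained in $C$. Second, your reduction ``GRH for $\zeta_L$ implies GRH for each $L(s,\chi,L/K)$'' via Brauer induction is the right idea, but note that what it actually gives is GRH for the Hecke $L$-functions of all subfields $M\subseteq L$ (since $\zeta_M\mid\zeta_L$ by Aramata--Brauer), and hence for each Artin $L$-function written as a ratio of such; one then checks that the possible poles arising from negative Brauer coefficients do not spoil the explicit-formula estimate. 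This is exactly how Lagarias--Odlyzko proceed.
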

	
	Finally, if we further assume that $L/K$ is abelian, then Artin's holomorphy conjecture is known to hold for $L/K$, and we obtain the following stronger result due to Zywina, which saves a factor of $\log x$ from an analogous result in \cite{mms}:
	
	\begin{theorem}[\cite{zyw}*{Thm.\ 2.3}]\label{zmain}
	    Suppose that $L/K$ is finite and abelian. Assume that the Dedekind zeta function $\zeta_L(s)$ 
		satisfies the Riemann Hypothesis. Then
	    \[	\pi_C(x, L/K)\ll \frac{| C|}{| G|}\frac{x}{\log x}+| C|^{1/2}[K:\Q]\frac{x^{1/2}}{\log x}
			\log M(L/K),\]
		where
		\[M(L/K):=2[L:K]\disc(K/\Q)^{1/[K:\Q]}\prod_{p \in \mathcal P(L/K)}p.\]
	\end{theorem}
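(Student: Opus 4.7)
The strategy is to exploit the abelianness of $G$ via Fourier analysis on the finite abelian group $G$, reducing the Chebotarev count to individual estimates for Hecke $L$-functions. Since $G$ is abelian, every character $\chi \in \widehat G$ is one-dimensional, so by class field theory the Artin $L$-function $L(s,\chi)$ is a Hecke $L$-function over $K$: in particular, it is entire for nontrivial $\chi$ (which sidesteps Artin's holomorphicity conjecture), and under GRH for $\zeta_L$ all its nontrivial zeros lie on $\Re s = 1/2$. Crucially, the Artin conductor of $L(s,\chi)$ can be much smaller than $\disc(L/\Q)$, which is the source of the improvement over Lagarias--Odlyzko.

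First, expand the indicator function of $C$ using Fourier inversion on $G$:
\[\mathbf 1_C(g) = \sum_{\chi\in\widehat G}c_\chi\,\chi(g),\qquad c_\chi := \frac{1}{|G|}\sum_{g\in C}\overline{\chi(g)}.\]
Summing over unramified primes of $K$ of norm at most $x$, and treating the finitely many ramified primes as a negligible error, gives
\[\pi_C(x,L/K) = \sum_{\chi\in\widehat G}c_\chi\,\Sigma_\chi(x),\qquad \Sigma_\chi(x) := \sum_{\substack{N\p\le x\\\p\text{ unram}}}\chi(\Frob_\p).\]
The trivial character contributes the main term $c_1\Sigma_1(x) = \tfrac{|C|}{|G|}\pi_K(x) = \tfrac{|C|}{|G|}\tfrac{x}{\log x}(1+o(1))$. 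For each nontrivial $\chi$, apply the explicit formula for $L(s,\chi)$ together with GRH to obtain a standard bound of the shape
\[\Sigma_\chi(x) \ll \frac{x^{1/2}}{\log x}\bigl(\log N_{K/\Q}(\mathfrak f_\chi) + [K:\Q]\log x\bigr),\]
where $\mathfrak f_\chi$ is the Artin conductor of $\chi$.

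To sum over nontrivial characters, combine Parseval's identity $\sum_\chi|c_\chi|^2 = |C|/|G|$ with Cauchy--Schwarz to get $\sum_{\chi\ne 1}|c_\chi| \le |C|^{1/2}$; this is precisely the step that yields the saving from $|C|$ to $|C|^{1/2}$ compared to a term-by-term estimate. Each $\mathfrak f_\chi$ is supported on the ramified primes $\mathcal P(L/K)$, and the conductor--discriminant identity $\log N|\disc(L/K)| = \sum_\chi \log N\mathfrak f_\chi$, combined with the trivial bounds on individual conductor exponents and the relation between discriminants of $L/K$, $K/\Q$, and $L/\Q$, allows one to absorb the logarithmic contributions into the quantity $[K:\Q]\log M(L/K)$, whose explicit shape is designed precisely to capture $[L:K]$, the base discriminant $\disc(K/\Q)^{1/[K:\Q]}$, and the radical of the ramification set.

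The main obstacle is the delicate bookkeeping for Artin conductors needed to recover the sharp factor $\log M(L/K)$ rather than the coarser $\log|\disc(L/\Q)|$ of Lagarias--Odlyzko: one must handle both the ramification in $L/K$, contributing the $\prod_{p\in\mathcal P(L/K)}p$ factor in $M(L/K)$, and the transfer of conductors from $K$ down to $\Q$, which accounts for the $\disc(K/\Q)^{1/[K:\Q]}$ factor. This is technical but formal; the conceptual heart of the proof is the triple (i) Fourier decomposition exploiting abelianness, (ii) the GRH-conditional explicit formula for each individual Hecke character sum $\Sigma_\chi$, and (iii) the $|C|^{1/2}$ gain from Cauchy--Schwarz plus Parseval.
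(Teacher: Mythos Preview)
The paper does not prove this statement; it is quoted directly from \cite{zyw}*{Thm.\ 2.3} and used as a black box. Your sketch is a correct outline of the argument and matches Zywina's own proof: Fourier expansion on the abelian group $G$, the GRH-conditional explicit formula applied to each Hecke $L$-function, and the Cauchy--Schwarz/Parseval step producing the $|C|^{1/2}$ saving, with the conductor bookkeeping packaged into $\log M(L/K)$.
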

	
	Here, we define $\mathcal P(L/K)$ to be the set of rational primes $p$ that are divisible by some prime $\p$ of $K$ that ramifies in $L$. 
	
	In order to apply these theorems, it is helpful to have a bound on $\log|\disc(L/\Q)|$. We will use the following result:
	
	\begin{proposition}[\cite{ser81}*{Prop.\ 5}]\label{prop:hensel}
	    Let $L/K$ be a finite extension of number fields. Then
	    \[\log |N_{K/\Q}(\disc(L/K))| \le ([L:\Q] - [K:\Q])\sum_{p\in\mathcal{P}(L/K)}\log p + [L:\Q]\log [L:K].\]
	\end{proposition}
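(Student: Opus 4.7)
The plan is to reduce to a local computation based on the identity
\[\log|N_{K/\Q}(\disc(L/K))| = \sum_p \log p \sum_{\mathfrak P\mid p} f(\mathfrak P/p)\, v_{\mathfrak P}(\mathfrak d_{L/K}),\]
where $\mathfrak d_{L/K}$ is the different of $L/K$ and, for each rational prime $p$, $\mathfrak P$ ranges over primes of $L$ above $p$ with $\mathfrak p := \mathfrak P\cap K$. The contribution of $p$ vanishes unless some $\mathfrak p\mid p$ ramifies in $L$, i.e.\ unless $p\in\mathcal P(L/K)$. The key analytic input I would use is Hensel's classical bound
\[v_{\mathfrak P}(\mathfrak d_{L/K}) \le \bigl(e(\mathfrak P/\mathfrak p) - 1\bigr) + v_{\mathfrak P}\bigl(e(\mathfrak P/\mathfrak p)\bigr),\]
whose two summands give rise to a ``tame'' and a ``wild'' contribution that I would estimate separately.

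For the tame part, using multiplicativity of ramification and residue indices together with the classical identities $\sum_{\mathfrak P\mid\mathfrak p} e(\mathfrak P/\mathfrak p) f(\mathfrak P/\mathfrak p) = [L:K]$ and $\sum_{\mathfrak p\mid p} e(\mathfrak p/p) f(\mathfrak p/p) = [K:\Q]$, my target is the per-prime bound
\[\sum_{\mathfrak P\mid p} f(\mathfrak P/p)\bigl(e(\mathfrak P/\mathfrak p) - 1\bigr) \le [L:\Q] - [K:\Q].\]
After rearrangement this reduces to the termwise inequality
\[f(\mathfrak p/p)\bigl[(e(\mathfrak p/p) - 1)(1 - [L:K]) - (g(\mathfrak p) - 1)\bigr] \le 0\]
for each $\mathfrak p\mid p$, where $g(\mathfrak p) := \sum_{\mathfrak P\mid\mathfrak p} f(\mathfrak P/\mathfrak p) \ge 1$. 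This is immediate, since $[L:K]\ge 1$ makes the first bracketed term non-positive and $g(\mathfrak p)\ge 1$ makes the second non-positive. Summing over $p\in\mathcal P(L/K)$, weighted by $\log p$, yields the main term $([L:\Q]-[K:\Q])\sum_{p\in\mathcal P(L/K)}\log p$.

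For the wild part I would use the identity $v_{\mathfrak P}(n) = e(\mathfrak P/p)\, v_p(n)$ for any positive integer $n$, together with the divisibility $e(\mathfrak P/\mathfrak p)\mid [L:K]$, to bound
\[\sum_{\mathfrak P\mid p} f(\mathfrak P/p)\, v_{\mathfrak P}\bigl(e(\mathfrak P/\mathfrak p)\bigr) \le v_p([L:K]) \sum_{\mathfrak P\mid p} e(\mathfrak P/p) f(\mathfrak P/p) = [L:\Q]\, v_p([L:K]).\]
Multiplying by $\log p$ and summing over $p$ then telescopes to $[L:\Q]\log[L:K]$, since $\sum_p v_p([L:K])\log p = \log[L:K]$. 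Adding the tame and wild bounds yields the proposition. The main obstacle is the tame inequality, which is the only place where the structure of the tower $L/K/\Q$ beyond Hensel's local bound genuinely enters; once it is in hand, both the reduction to the local setting and the wild estimate are essentially formal.
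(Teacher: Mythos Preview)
The paper does not give its own proof of this proposition; it simply cites Serre's original. Your outline follows the standard route (Hensel's bound split into a tame and a wild contribution), and the tame estimate is carried out correctly. The gap is in the wild part: you invoke the divisibility $e(\mathfrak P/\mathfrak p)\mid [L:K]$ to deduce $v_p\bigl(e(\mathfrak P/\mathfrak p)\bigr)\le v_p([L:K])$, but this divisibility is \emph{not} valid for arbitrary finite extensions. What is true locally is $e(\mathfrak P/\mathfrak p)\mid [L_{\mathfrak P}:K_{\mathfrak p}]$, and the local degree $[L_{\mathfrak P}:K_{\mathfrak p}]$ need not divide $[L:K]$ when $L/K$ is not Galois (for instance, a degree~$6$ extension of $\Q$ can have a prime with local degree~$4$, so $e=4$ while $v_2(6)=1<2=v_2(4)$). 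Thus your telescoping $\sum_p v_p([L:K])\log p=\log[L:K]$ does not apply as written.

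That said, for every application in this paper the relevant extension is Galois (fixed fields of subgroups of $\G(\Fl)$ normal in one another), and in the Galois case $e(\mathfrak P/\mathfrak p)f(\mathfrak P/\mathfrak p)g(\mathfrak p)=[L:K]$ does give $e(\mathfrak P/\mathfrak p)\mid[L:K]$, so your argument goes through. If you want the general statement, the cleanest fix is to replace the divisibility by the cruder inequality $v_p\bigl(e(\mathfrak P/\mathfrak p)\bigr)\log p\le\log e(\mathfrak P/\mathfrak p)\le\log[L:K]$; one then has to be slightly more careful in summing over primes, but the same bound $[L:\Q]\log[L:K]$ for the wild part can still be extracted.
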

	
	In order to estimate the size of $\pi_C(x, L/K)$, it is often more convenient to study the following weighted version:
	
	\begin{definition}\label{def:weighted-set}
	    For any $x>0$, define
	    \[\widetilde\pi_{C}(x, L/K) := \sum_{\substack{\p\in M_K, m\ge 1\\N(\p^m)\le x}}\frac1m \delta_C(\Frob_\p^m),\]
	    where $\delta_C\:G\to \{0,1\}$ is such that $\delta_C(g) = 1$ if and only if $g\in C$.
	\end{definition}
	
	This weighted sum $\widetilde\pi_{C}(x, L/K)$ is a good approximation of $\pi_C(x, L/K)$:
	
	\begin{lemma}[\cite{zyw}*{Lem.\ 2.7}]\label{zywina1}
	    We have
	    \[	\widetilde \pi_C(x, L/K)= \pi_C(x, L/K)+O\br{[K:\Q]\br{\frac{x^{1/2}}{\log x}+\log M(L/K)}},\]
	    where $M(L/K)$ is the constant defined in Theorem $\ref{zmain}$.
	\end{lemma}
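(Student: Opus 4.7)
The plan is to bound the difference $\widetilde\pi_C(x, L/K) - \pi_C(x, L/K)$ directly by isolating the terms appearing in $\widetilde\pi_C$ but not in $\pi_C$. These fall into two categories: proper prime power contributions ($m \ge 2$), and contributions from ramified primes (at $m = 1$). For each, I would apply the trivial bound $\delta_C \le 1$ together with the elementary fact that each rational prime has at most $[K:\Q]$ primes of $K$ above it, and then invoke standard prime-counting estimates.

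First I would treat the contribution from $m = 2$. Since the condition $N(\p)^2 \le x$ forces $N(\p) \le x^{1/2}$, the number of primes $\p$ of $K$ satisfying this is at most $[K:\Q]\pi(x^{1/2})$. By the prime number theorem, this contributes at most $\frac12[K:\Q]\pi(x^{1/2}) \ll [K:\Q]\,x^{1/2}/\log x$ to the sum. For $m \ge 3$, the analogous estimate gives $\sum_{m \ge 3}\frac{1}{m}[K:\Q]\pi(x^{1/m}) \ll [K:\Q]\,x^{1/3}$, which is absorbed into the previous bound.

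For the ramified primes at $m = 1$, I would use that every such $\p$ of $K$ lies above a rational prime $p \in \mathcal{P}(L/K)$, and that each such $p$ has at most $[K:\Q]$ primes of $K$ above it. Hence the total count is at most $[K:\Q]\cdot|\mathcal{P}(L/K)|$. Since $\prod_{p\in\mathcal{P}(L/K)}p \le M(L/K)$ by the very definition of $M(L/K)$, we have $|\mathcal{P}(L/K)| \le (\log M(L/K))/\log 2$, so this contribution is $O([K:\Q]\log M(L/K))$. Combining the three estimates yields the stated error term.

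There is no genuine obstacle here: the argument is essentially combinatorial bookkeeping, with the prime number theorem as the only real analytic input. A minor subtlety is how to interpret $\Frob_\p^m$ at ramified $\p$ (where one typically takes a lift in the decomposition group modulo inertia), but since we only use the trivial estimate $\delta_C \le 1$ on these terms, this ambiguity is harmless.
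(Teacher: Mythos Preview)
Your proof is correct. The paper does not supply its own proof of this lemma; it merely quotes it from Zywina \cite{zyw}*{Lem.\ 2.7} without argument. Your bookkeeping argument---separating the $m\ge 2$ contributions from the ramified $m=1$ terms and bounding each trivially---is the standard way to establish this estimate and is essentially what one finds in the cited reference.
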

	
	We end this section by  recalling the following result of Zywina \cite[Lemma 2.6]{zyw}.
	\begin{lemma}\label{zywina2}
		\begin{enumerate}
			\item
			Let $H$ be a subgroup of $G$ and suppose that every element of $C$ is conjugate to some element of $H$. Then
			\[\widetilde \pi_C(x, L/K)\le \widetilde \pi_{C\cap H} (x, L/L^{H}).\]
			\item 
			Let $N$ be a normal subgroup of $G$ an suppose that $NC \subset C.$ Then
			\[\widetilde \pi_C(x, L/K)= \widetilde \pi_{\overline C} (x, L^N/K),\]
			where $\overline C$ is the image of $C$ in $G/N=\Gal(L^N/K)$. 
		\end{enumerate}
	\end{lemma}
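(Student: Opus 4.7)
My plan is to treat both parts as essentially formal consequences of how Frobenius elements behave under passing to an intermediate Galois extension. Part $(2)$ is the shorter of the two. The hypothesis $NC\subset C$ says exactly that $C$ is a union of $N$-cosets, i.e.\ $C = \pi^{-1}(\overline{C})$ under the projection $\pi\:G\to G/N = \Gal(L^N/K)$. Since any prime $\p$ of $K$ unramified in $L$ is automatically unramified in $L^N$, and the Frobenius in $\Gal(L^N/K)$ is just the image of the Frobenius in $\Gal(L/K)$, one has $\delta_C(\Frob_\p^m) = \delta_{\overline{C}}(\overline{\Frob_\p}^m)$ term by term. The only subtlety is the finite set of primes ramified in $L$ but unramified in $L^N$, which contribute $O(1)$ and can be absorbed into conventions.

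For part $(1)$, the plan is as follows. I would fix a prime $\p$ of $K$ unramified in $L$, a prime $\mathfrak{P}_0$ of $L$ above $\p$, and set $g = \Frob_{\mathfrak{P}_0/K}\in G$ and $D = \langle g\rangle$. Using the standard double-coset description, the primes $\q$ of $L^H$ above $\p$ correspond bijectively to $H\backslash G/D$, and the prime $\q$ attached to the double coset $HxD$ has residue degree $f(\q/\p) = \min\{k\ge 1 : xg^k x^{-1}\in H\}$ and Frobenius $\Frob_\q = xg^{f(\q/\p)}x^{-1}\in H$. After substituting these into the definition of $\widetilde\pi_{C\cap H}(x, L/L^H)$ and changing variables $m = f(\q/\p)m'$ (noting $f(\q/\p)\mid m$ iff $xg^m x^{-1}\in H$), the contribution of primes above $\p$ takes the form
\[
\sum_{m\ge 1,\ N(\p)^m\le x}\frac{\delta_C(g^m)}{m}\sum_{HxD\,:\,xg^m x^{-1}\in H}f(HxD),
\]
where I have used the conjugation-invariance of $C$ to factor $\delta_C(g^m)$ out of the double-coset sum.

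It then suffices to show that the inner double-coset sum is $\ge 1$ whenever $g^m\in C$. Here the hypothesis of part $(1)$ enters: by assumption $g^m = y^{-1}hy$ for some $y\in G$ and $h\in C\cap H$, so the double coset $HyD$ satisfies $yg^m y^{-1} = h\in H$ and contributes $f(HyD)\ge 1$. Summing over $\p$ gives the desired inequality. The main obstacle, to the extent there is one, is the combinatorial bookkeeping: verifying the double-coset formulas for $f(\q/\p)$ and $\Frob_\q$ carefully, and tracking the variable change cleanly. These are standard once set up, and no analytic input is needed for either part.
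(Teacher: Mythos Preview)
Your argument is correct, and there is in fact nothing in the paper to compare it against: the lemma is simply quoted from \cite{zyw}*{Lem.\ 2.6} without proof. What you have written is the standard proof. For part~$(1)$, the double-coset parametrisation of primes $\q$ of $L^H$ above a fixed $\p$, together with the identities $N(\q) = N(\p)^{f(\q/\p)}$ and $\Frob_\q = xg^{f(\q/\p)}x^{-1}$, is exactly the classical reduction step appearing in Lagarias--Odlyzko and Serre. Your key observation that $f(\q/\p)\mid m$ if and only if $xg^m x^{-1}\in H$ holds because $x^{-1}Hx\cap D$ is a subgroup of the finite cyclic group $D$, hence generated by $g^{f(\q/\p)}$; this is what makes the substitution $m = f(\q/\p)m'$ lossless. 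The hypothesis that every element of $C$ is $G$-conjugate into $H$ enters precisely where you use it, to guarantee the inner double-coset sum is at least $1$.

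One minor quibble on part~$(2)$: the contribution of a single prime $\p$ ramified in $L$ but unramified in $L^N$ is not literally $O(1)$ --- it can be as large as $\sum_{N(\p)^m\le x}\frac{1}{m} = O(\log\log x)$ --- though since there are only finitely many such primes the total discrepancy is still $O(\log\log x)$, negligible for every application in the paper. With Zywina's conventions for $\Frob_\p$ at ramified primes the equality is in fact exact, so your phrase ``absorbed into conventions'' is fair; only the ``$O(1)$'' is slightly imprecise.
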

	
\section{Unconditional \texorpdfstring{bounds on $\pi_f(x, a)$}{Lang--Trotter bounds}}\label{sec:proof-unconditional}

Recall that $f$ is a cuspidal Siegel modular eigenform of weights $(k_1, k_2)$, $k_1\ge k_2\ge 2$ and level $N$. Assume that $f$ is of type \textbf{(G)}, and is not CM, RM or a symmetric cube lift. Let $\pi$ be the automorphic representation of $\Gf(\AQ)$ associated to $f$, and let $\varepsilon$ be its central character. Let $E=\Q(\{a_p : p\nmid N\},\varepsilon)$ be the coefficient field of $f$, and let $F, K$ be the number fields defined in \Cref{def:F} and \Cref{def:K}. Let $\LL$ be the set of primes defined in \Cref{def:final-L} and, for $\l\in\LL$, write
	\[\wrho_{\l}\:\Gal(\Qb/K)\to\G(\Zl)\]
	for the Galois representation defined just after \Cref{def:G}. Here,
	\[\G(\Zl) = \set{(g, \nu) \in \Gf(\O_F\tensor_\Z\Zl)\times\Zl\t : \simil(g) = \nu^{k_1 + k_2 -3}}.\]
	In particular, the projection of $\wrho_\l$ onto $\Gf(\O_F\tensor_\Z\Zl)$ is exactly $\rho_\l|_K$, and its projection to $\Zl\t$ is the $\l$-adic cyclotomic character. By \Cref{thm:precise-image}, $\wrho_\l$ has open image in $\G(\Zl)$ for all $\l\in\LL$ and is surjective for all but finitely many $\l\in\LL$. Using the fact that the dimension of the $\l$-adic Lie group $\Sp_4(\O_{F_\lambda})$ is $10[F_\lambda:\Ql]$ for each prime $\lambda\mid\l$, we see that
\begin{equation}\label{eq:dim-gl}
    \dim \G(\Zl) = {10[F:\Q] + 1}.
\end{equation}
    
\subsection{The case $a\ne 0$}

Fix a non-zero algebraic integer $a\in \O_F$. Our goal is to bound the size of the set $\pi_f(x, a) := \#\set{p\le x : a_p = a}$. 

    \begin{proposition}\label{prop:primes-of-K}
        Assume that $a\ne 0$. Then
        \[ \pi_f(x, a) = \frac{1}{[K:\Q]}\#\set{\p\in M_K, N(\p)=p\le x : a_p = a}.\]
    \end{proposition}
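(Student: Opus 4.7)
The plan is to show that the nonvanishing of $a$ forces every contributing rational prime $p$ to split completely in $K$, so that there are exactly $[K:\Q]$ primes of $K$ above each such $p$, all of norm $p$.

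First I would fix a rational prime $p\nmid N$ with $a_p=a\ne 0$ and argue that $p$ splits completely in $K$. Since $a\in\O_F$ and $F=E^\Gamma$, we have $\sigma(a)=a$ for every $\sigma\in\Gamma$. Combining this with the defining relation of an inner twist $\sigma(a_p)=\chi_\sigma(p)a_p$, we obtain
\[
a=\sigma(a_p)=\chi_\sigma(p)\,a_p=\chi_\sigma(p)\,a,
\]
and since $a\ne 0$ this forces $\chi_\sigma(p)=1$ for every $\sigma\in\Gamma$. Viewing each $\chi_\sigma$ as a Galois character, this means $\Frob_p\in\bigcap_{\sigma\in\Gamma}H_\sigma=H$, so by the definition of $K$ (Definition \ref{def:K}), the image of $\Frob_p$ in $\Gal(K/\Q)$ is trivial; equivalently, $p$ splits completely in $K$. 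Note that $p$ is automatically unramified in $K$, because each $\chi_\sigma$ has conductor supported on primes dividing $N$, so the Frobenius class is well defined.

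Next I would translate this splitting statement into a counting identity. For every $p\le x$ contributing to $\pi_f(x,a)$, the complete splitting of $p$ in $K$ gives exactly $[K:\Q]$ primes $\p$ of $K$ above $p$, each of residue degree one and hence with $N_{K/\Q}(\p)=p$. Conversely, every $\p\in M_K$ counted on the right-hand side---i.e.\ with $N_{K/\Q}(\p)=p\le x$ and $a_p=a$---lies above some rational prime $p$ with $a_p=a$, and this $p$ contributes to $\pi_f(x,a)$. Summing the multiplicity $[K:\Q]$ over the $p$'s counted by $\pi_f(x,a)$ yields the desired equality.

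No step is genuinely difficult here; the only point to be careful about is that the argument above depends essentially on $a\ne 0$, since it is exactly division by $a$ that converts the inner-twist relation into the triviality of $\chi_\sigma(p)$. Primes $p\mid N$ can be safely ignored on both sides by the same convention under which $a_p$ is defined only for $p\nmid N$.
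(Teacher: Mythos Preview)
Your proof is correct and follows essentially the same argument as the paper: both use the inner-twist relation $\sigma(a_p)=\chi_\sigma(p)a_p$ together with $a_p=a\in\O_F\setminus\{0\}$ to conclude $\chi_\sigma(p)=1$ for all $\sigma\in\Gamma$, hence that $p$ splits completely in $K$. You are slightly more explicit than the paper in spelling out the final counting step and the unramifiedness of $p$ in $K$, but the substance is identical.
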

    
    \begin{proof}
        Recall from \Cref{sec:inner-twists} that, by definition, $\Gamma=\Gal(E/F)$ is the group of $\sigma\in \Aut(E/\Q)$ such that $(\sigma, \chi_\sigma)$ is an inner twist. Hence, if $a_p = a$ for some non-zero $a\in\O_F$, then, for every inner twist $(\sigma, \chi_\sigma)$, we have
        \[a_p = \sigma(a_p) =\chi_\sigma(p)a_p,\]
        from which it follows that $\chi_\sigma(p) = 1$. Since $K$ is, by definition, the field cut out by all the $\chi_\sigma$'s, we see that, if $a_p = a$ for a prime $p$, then $p$ splits completely in $K$. 
    \end{proof}
    
    It follows from \Cref{prop:primes-of-K} that bounding the size of $\pi_f(x, a)$ is exactly the same as bounding the size of $\#\set{\p\in M_K, N(\p)=p\le x : a_p = a}$, up to the constant $[K:\Q]$.
    
    Let
\[\CC_\l(a) = \set{(g,\nu)\in \im \wrho_\l \sub\G(\Zl): \tr(g) = a}.\]
Then, for any prime $\l\in\LL$, we have
\begin{align}\label{eq:comparison}
    \#\set{\p\in M_K, N(\p)=p\le x : a_p = a}  
    &= \pi_{\CC_\l(a)}(x, L/K) + O(1),
\end{align}
where $L$ is the fixed field of the kernel of $\wrho_\l$, and the $O(1)$ is to account for the finitely many primes $p\mid \l N$. In order to prove \Cref{thm:unconditional-lta}, we use \Cref{thm:cdt-serre} to estimate the size of $\pi_{\CC_\l(a)}(x, L/K)$.

\begin{proof}[Proof of Theorem $\ref{thm:unconditional-lta}~(i)$]
We show that the set $\CC_\l(a)$ has Minkowski dimension at most $9[F:\Q]+1$.  By \Cref{thm:precise-image}, $\im\wrho_\l$ is an open subgroup of $\G(\Zl)$. Hence, by $(\ref{eq:dim-gl})$, it has dimension $10[F:\Q]+ 1$ as an $\l$-adic Lie group. The set $\CC_\l(a)$ is the subvariety cut out by the equation $\tr(g) = a$. The equation $\tr(g) = a$ over $F\tensor_\Q{\Ql}$ can be viewed as $\dim_{\Ql}F\tensor_\Q{\Ql} = [F:\Q]$ equations over $\Ql$.
     By \cite{ser81}*{Sec.\ 3.2}, it follows that $\dim_M\CC_\l(a) \le (10[F:\Q] + 1) - [F:\Q] = 9[F:\Q] + 1$.
    
    It follows from \Cref{thm:cdt-serre} that 
    \[\pi_{\CC_\l(a)}(x, L/K) \ll \frac{x}{\log(x)^{1 + \alpha - \epsilon}},\]
    where $\alpha = \frac{[F:\Q]}{10[F:\Q]+1}$. The result follows from \Cref{prop:primes-of-K} and $(\ref{eq:comparison})$.
\end{proof}

\subsection{The case $a = 0$}

Let $PG_\l$ denote the image of the set
\[G_\l = \set{g\in\Gf(\O_F\tensor_\Z\Zl) : \simil(g)\in \Zl^{\times(k_1 + k_2 -3)}}\]
in $\PGSp_4(\O_E\tensor_\Z\Zl)$, and let $H_\l$ denote the image of $\Proj\rho_\l\:\Ga\Q\to \PGSp_4(\O_F\tensor_\Z\Zl)$. Let $L = \Qb^{H_\l}$ be the field cut out by the kernel of $\Proj\rho_\l$. By \Cref{thm:precise-Q-image}, there is a subgroup of $H_\l$ of index at most $[K:\Q]$ that is open in $PG_\l$. In particular, $H_\l$ and $PG_\l$ have the same dimension as $\l$-adic Lie groups, i.e.\ both have dimension $9[F:\Q] + 1$.

We can write $PG_\l = \prod_{\lambda\mid\l}PG_\lambda$, where $PG_\lambda$ is the image of the set
\[G_\lambda = \set{g\in\Gf(\O_{F_\lambda}) : \simil(g)\in \Zl^{\times(k_1 + k_2 -3)}}\]
in $\PGSp_4(\O_{F_\lambda})$. Since $H_\l$ is contained in a union of cosets of $G_\l$, we can similarly define $H_\lambda$ to be the projection of $H_\l$ onto the corresponding coset of $PG_\lambda$.

\begin{proof}[Proof of Theorem $\ref{thm:unconditional-lta}~(ii)$]
Set
\[\CC_\l(0) = \set{g\in H_\l : \tr(g) = 0}.\]
Note that having trace $0$ is well defined on $\PGSp_4(\O_E\tensor_\Z\Zl)$, so this definition makes sense.
As in the previous section, we have $\dim H_\l-\dim_M\CC_\l(0) \ge [F:\Q]$. 

    We compute the quantity
    \[r = r_{\CC_\l(0)} = \Inf_{s\in \CC_\l(0)}\dim \frac{H_\l}{Z_{H_\l}(s)}.\]
    If $s\in \CC_\l(0)$, then, by \Cref{cor:precise-Q-image}, we can write
    \[s= \dmat{\alpha}{\alpha}{\varepsilon/\alpha}{\varepsilon/\alpha}h,\]
    where $h\in PG_\l$ and $\alpha, \varepsilon\in \O_E\tensor\Zl$. Moreover, as in \Cref{thm:precise-Q-image}, we have $\varepsilon/\alpha^2\in \O_F\tensor\Zl$. Hence, rescaling by $\alpha\ii$, we can assume that $s\in \PGSp_4(\O_F\tensor\Zl)$. 
    
    Write $s = (s_\lambda)_\lambda$ via the isomorphism $F\tensor\Ql = \prod_{\lambda\mid\l}F_\lambda$. Then we have
    \[\dim \frac{H_\l}{Z_{H_\l}(s)} = \sum_{\lambda\mid\l}\dim H_\lambda - \dim Z_{H_\lambda}(s_\lambda).\]
    We now argue as in \cite{cojocaru-abelian-varieties}*{Thm.\ 1}. By \cite{cojocaru-abelian-varieties}*{Thm.\ A.1}, we have $\dim H_\lambda - \dim Z_{H_\lambda}(s_\lambda)\ge 4[F_\lambda:\Ql]$ for all $s\in \CC_\l(0)$. It follows that 
    \[\dim \frac{H_\l}{Z_{H_\l}(s)}\ge 4[F:\Q].\]
    Hence, by \Cref{thm:cdt-serre}, we have
    \[\pi_{\CC_\l(0)}(x, L/\Q) \ll \frac{x}{(\log x)^{1+\alpha - \epsilon}}, \]
    where 
    \[\alpha = \frac{[F:\Q]}{9[F:\Q] + 1 - 4[F:\Q]/2} = \frac{[F:\Q]}{7[F:\Q] + 1}.\]
    \end{proof}

\begin{remark}\label{rem:strengthened-result}
    Suppose that $a\in\O_F$ is such that there exists a prime $\l\in \LL$ such that $\l\mid \frac a 4$. Then, arguing as in the above proof (and as in \cite{cojocaru-abelian-varieties}*{Thm.\ 1}), it follows that $r_{\CC_\l(a)}\ge 4[F:\Q]$. Applying \Cref{thm:cdt-serre}, it follows that 
    \[\#\set{p\le x: a_p = a}\ll \frac{x}{(\log x)^{1+\alpha - \epsilon}}, \]
    where 
    \[\alpha = \frac{[F:\Q]}{8[F:\Q] + 1},\]
    a stronger bound than \Cref{thm:unconditional-lta}. In particular, if $\LL$ is the set of all primes, which is conjecturally the case when $E = F$, then, as in \cite{cojocaru-abelian-varieties}*{Thm.\ 1}, we obtain this stronger bound whenever $a\ne \pm4$.
\end{remark}

\section{Conditional \texorpdfstring{bounds on $\pi_f(x, a)$}{Lang--Trotter bounds}}\label{sec:proof-conditional}

Recall that $f$ is a cuspidal Siegel modular eigenform of weights $(k_1, k_2)$, $k_1\ge k_2\ge 2$ and level $N$. Assume that $f$ is of type \textbf{(G)}, and is not CM, RM or a symmetric cube lift. Let $\pi$ be the automorphic representation of $\Gf(\AQ)$ associated to $f$, and let $\varepsilon$ be its central character. Let $E=\Q(\{a_p : p\nmid N\},\varepsilon)$ be the coefficient field of $f$, and let $F, K$ be the number fields defined in \Cref{def:F} and \Cref{def:K}. Let $\LL$ be the set of primes defined in \Cref{def:final-L} and, for $\l\in\LL$, write
	\[\worho_{\l}\:\Gal(\Qb/K)\to\G(\Fl)\]
	for the reduction of $\wrho_\l$ modulo $\l$. Here,
	\[\G(\Fl) = \set{(g, \nu) \in \Gf(\O_F\tensor_\Z\Fl)\times\Fl\t : \simil(g) = \nu^{k_1 + k_2 -3}}.\]
	In particular, the projection of $\worho_\l$ onto $\Gf(\O_F\tensor_\Z\Fl)$ is exactly $\orho_\l|_K$, and its projection to $\Fl\t$ is the mod $\l$ cyclotomic character. By \Cref{thm:precise-image}, $\worho_\l$ is surjective for all but finitely many primes $\l\in\LL$.

    Recall that for $(g,\nu)\in \G(\Fl)$, we define $\tr(g,\nu)$ to be the trace of $g\in\Gf(\O_F\tensor_\Z\Fl)$, viewed as an element of $\O_F\tensor_\Z\Fl$. 
    
    \subsection{The case $a\ne 0$}
    
    Fix a non-zero algebraic integer $a\in \O_F$. Our goal is to bound the size of $\pi_f(x, a) = \#\set{p\le x : a_p = a}$. As in the previous section, by \Cref{prop:primes-of-K}, bounding the size of $\pi_f(x, a)$ is exactly the same as bounding the size of $\#\set{\p\in M_K, N(\p)=p\le x : a_p = a}$, up to the constant $[K:\Q]$. We will bound the size of this set by generalising the strategy of \cite{mms}.

    Let $p$ be a prime with $p\nmid N$ that splits completely in $K$, and let $\p$ be any prime of $K$ above $p$. Then the characteristic polynomial of $\wrho_\l(\Frob_\p)$ is equal to the characteristic polynomial of $\rho_\l(\Frob_p)$, which is a polynomial over $\O_F$ that is independent of $\l$. Define $F(p)$ to be its splitting field over $\Q$ and define
    \begin{equation*}
        \pi(x, a ; \l) := \#\set{\p\in M_K, N(\p) = p \le x: a_p = a, \ \l\text{ splits completely in }F(p)}.
    \end{equation*}
    
    The following lemma will allow us to use $\pi(x, a;\l)$ to bound $\pi_f(x,a)$:
    
    \begin{lemma}[c.f.\ \cite{mms}*{Lem.\ 4.4}]\label{lem:murty-bound}
	   Let $I$ be the interval $[y, y+u]$, where $y, u$ are chosen so that $x\ge y \ge u \ge y^{1/2}(\log y)^{1+ \epsilon}(\log xy)$ for some $\epsilon\ge 0$. Then, assuming GRH, we have
            \[\#\set{\p\in M_K, N(\p)=p\le x : a_p = a} \ll \max_{\l\in I}\pi(x, a; \l).\]
    \end{lemma}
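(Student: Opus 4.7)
The plan is to adapt the double-counting strategy of \cite{mms}*{Lem.\ 4.4} to our setting. Let $S := \set{\p \in M_K : N(\p) = p \le x,\ a_p = a}$ denote the set we wish to bound. For each $\p \in S$, we count the primes $\l \in I$ that split completely in $F(p)$, and estimate the resulting double sum in two ways.

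The key auxiliary input is that each field $F(p)$ arising from $\p \in S$ has controlled degree and discriminant, uniformly in $p$. Since $a_p = a \ne 0$ forces $p$ to split completely in $K$ by \Cref{prop:primes-of-K}, the characteristic polynomial of $\rho_\l(\Frob_p)$ has coefficients in $F$. Moreover, since the eigenvalues of a $\Gf$-valued representation come in pairs with product equal to the similitude, the Galois group of $F(p)/F$ embeds into the Weyl group of $\Gf$, which has order $8$; hence $[F(p):\Q]\le 8[F:\Q]$ is bounded in terms of $f$ alone. In addition, $F(p)$ is unramified outside the rational primes dividing $N$ together with those dividing the discriminant of the characteristic polynomial, and the norm of this discriminant down to $\Q$ is bounded by a power of $p$. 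Applying \Cref{prop:hensel} then yields $\log|\disc(F(p)/\Q)| \ll_f \log x$ uniformly in $p \le x$.

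Apply \Cref{thm:conditional-cdt} to the extension $F(p)/\Q$ with $C = \{1\}$, and subtract the counts up to $y+u$ and up to $y$. The main term contributes $\gg_f u/\log y$, while the error is $O(y^{1/2}(\log|\disc F(p)| + [F(p):\Q]\log(y+u))) = O_f(y^{1/2}\log(xy))$. The hypothesis $u \ge y^{1/2}(\log y)^{1+\epsilon}(\log xy)$ is precisely what is required for the main term to dominate, yielding, for each $\p \in S$,
\[
\#\set{\l \in I : \l \text{ splits completely in } F(p)} \gg_f \frac{u}{\log y}.
\]
Summing over $\p \in S$ gives the lower bound $\gg_f |S|\cdot u/\log y$ for the corresponding double sum.

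On the other hand, interchanging the order of summation, this double sum equals $\sum_{\l \in I} \pi(x, a; \l)$, which is at most the number of primes in $I$ times $\max_{\l \in I} \pi(x, a; \l)$. By the prime number theorem the former is $O(u/\log y)$, so the double sum is $\ll (u/\log y)\max_{\l \in I} \pi(x, a; \l)$. Comparing the two estimates and cancelling the common factor $u/\log y$ gives the claimed bound. The main technical obstacle is establishing the uniform discriminant bound $\log|\disc F(p)| \ll_f \log x$ and verifying that the hypothesis on $u$ is sharp enough to absorb the GRH--Chebotarev error term uniformly as $p$ varies.
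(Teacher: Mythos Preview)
Your proof is correct and follows the same double-counting strategy as the paper: estimate $\sum_{\l\in I}\pi(x,a;\l)$ from below by $|S|\cdot(\pi(y+u)-\pi(y))$ via GRH--Chebotarev applied to each $F(p)/\Q$ with $C=\{1\}$, and from above by $(\pi(y+u)-\pi(y))\cdot\max_{\l\in I}\pi(x,a;\l)$. The only difference worth noting is in the discriminant bound: the paper isolates this as a separate lemma (\Cref{lem:log-disc}), proved by bounding the roots of the characteristic polynomial via the Jacquet--Shalika bounds and then controlling $\disc(L_i/\Q)$ for each $L_i=F(\alpha_ip^{(k_1+k_2-3)/2})$ directly, whereas you deduce it from \Cref{prop:hensel} after observing that the ramified primes divide $N\cdot\disc(F)\cdot N_{F/\Q}(\disc(\text{char.\ poly.}))$ --- both arguments ultimately rest on the same eigenvalue bounds and are equally valid.
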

    
    To prove \Cref{lem:murty-bound}, we will require the following bound on the discriminant of $F(p)$:
    
    \begin{lemma}\label{lem:log-disc}
     We have $\log|\disc(F(p)/\Q)| = O(\log p)$.
 \end{lemma}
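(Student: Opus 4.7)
The plan is to apply \Cref{prop:hensel} to the extension $F(p)/\Q$, which reduces the estimate to bounding (a) the degree $[F(p):\Q]$ and (b) the sum $\sum_{q \in \mathcal{P}(F(p)/\Q)} \log q$.

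For (a), the key observation is that, since $p$ splits completely in $K$, the characteristic polynomial $\chi_p(X) := \det(XI - \rho_\l(\Frob_p))$ has all its coefficients in $F$: the trace $a_p$ lies in $F$ by the very definition of $K$ as the fixed field of the kernel of all the inner twist characters $\chi_\sigma$; the coefficient $b_p$ of $X^2$ lies in $F$ by \Cref{lem:b_q} together with $\varepsilon(p) = 1$ (a consequence of \Cref{rem:character-non-trivial}); and the remaining two coefficients $-a_p p^{k_1+k_2-3}$ and $p^{2(k_1+k_2-3)}$ lie in $F$ by inspection of the symplectic characteristic polynomial. Hence $F(p) \supseteq F$ and $[F(p):F] \le 4! = 24$, so $[F(p):\Q] \le 24[F:\Q]$ is uniformly bounded in $p$.

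For (b), any rational prime $q$ ramifying in $F(p)/\Q$ either ramifies in $F/\Q$---a fixed finite set independent of $p$---or lies below some prime of $F$ dividing $\disc(\chi_p) \in \O_F$, in which case $q$ divides $N_{F/\Q}(\disc(\chi_p))$. To bound this norm, I would note that for every embedding $\sigma\:F \hookrightarrow \C$ the coefficients $\sigma(a_p), \sigma(b_p)$ are polynomially bounded in $p$---either by the crude polynomial bound on Fourier coefficients of Siegel cusp forms or by the Ramanujan bound available in our setting. Cauchy's root bound then forces each root of $\sigma(\chi_p)$ to be polynomially bounded, so $|\sigma(\disc(\chi_p))| \ll p^N$ for some constant $N$ depending only on $f$. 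Taking the product over the $[F:\Q]$ embeddings of $F$ gives $\log|N_{F/\Q}(\disc(\chi_p))| = O(\log p)$, whence
\[\sum_{q \in \mathcal{P}(F(p)/\Q)} \log q \le \log|N_{F/\Q}(\disc(\chi_p))| + O(1) = O(\log p).\]
Plugging this and the uniform bound on $[F(p):\Q]$ into \Cref{prop:hensel} then yields $\log|\disc(F(p)/\Q)| = O(\log p)$. The only substantive input is the polynomial bound on $\disc(\chi_p)$ together with all its Galois conjugates, which rests on the polynomial growth of Hecke eigenvalues of Siegel cusp forms; I do not anticipate any genuine obstacle beyond this routine estimate.
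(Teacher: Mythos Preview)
Your argument is correct and in fact a little cleaner than the paper's. The paper does not invoke \Cref{prop:hensel} here; instead it bounds the roots of $\chi_p$ directly using the Jacquet--Shalika bound $p^{-1/2} < |\alpha_i| < p^{1/2}$ on the Satake parameters (or Ramanujan when $k_2>2$), deduces $\log|\disc(\chi_p)| = O(\log p)$, then shows $\log|\disc(L_i/\Q)| = O(\log p)$ for each single-root field $L_i = F(\alpha_i p^{(k_1+k_2-3)/2})$, and finally assembles $F(p)$ as the compositum of the $L_i$ using divisibility of differents. Your route---apply \Cref{prop:hensel} once to $F(p)/\Q$, bound $[F(p):\Q]$ uniformly, and control the ramified primes through $N_{F/\Q}(\disc(\chi_p))$---reaches the same conclusion with fewer moving parts and has the virtue of reusing a proposition already on hand. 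The essential analytic input is identical in both proofs: polynomial growth of the coefficients (equivalently, the roots) of $\chi_p$ under every complex embedding, which is exactly what the Jacquet--Shalika bound supplies. One minor point worth making explicit in your write-up: if $\disc(\chi_p)=0$ (repeated roots), replace $\chi_p$ by its squarefree part before invoking the ramification criterion; the coefficient bounds persist and the argument goes through unchanged.
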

 
 \begin{proof}
    Let $g(x)\in \O_F[x]$ be the characteristic polynomial of $\rho_\l(\Frob_p)$. Let $\pi$ be the unitary cuspidal automorphic representation of $\GSp_4(\AQ)$ associated to $f$. By assumption, $\pi$ lifts to a cuspidal automorphic representation $\Pi$ of $\GL_4(\AQ)$. Let $\alpha_1, \ldots, \alpha_4$ be the Satake parameters of the local representation $\Pi_p$. By \cite{jacquetshalika1}*{Cor.\ 2.5}, we have
    \[p^{-\frac12} < |\alpha_i| < p^{\frac12}\]
    for each $i$. In fact, if the weight $k_2 >2$, then the Ramanujan conjecture is known, and we have $|\alpha_i| = 1$.
    
    By definition, the four roots of $g(x)$ are $\alpha_1p^{\frac12(k_1 + k_2-3)}, \ldots, \alpha_4p^{\frac12(k_1 + k_2-3)}$. It follows that the coefficients of $g(x)$ are $O(p^{2(k_1 + k_2 -2)})$. Since the discriminant of $g(x)$ is a polynomial in its coefficients, we see that $\log|\disc(g(x))| = O(\log p)$.
    
    Let $L_i = F(\alpha_ip^{\frac12(k_1 + k_2-3)})$. Then 
    \begin{align*}
        \disc(L_i/\Q) &= N_{F/\Q}(\disc(L_i/F))\cdot \disc(F/\Q)^{[L_i:F]}.
    \end{align*}
    Since $\disc(F/\Q)^{[L_i:F]} = O(1)$, it follows from the above reasoning that $\log|\disc(L_i/\Q)| = O(\log p)$. But $F(p)$ is the compositum of the $L_i$'s and, in general, $\d(K_1\cdot K_2/\Q)\mid \d(K_1/\Q)\cdot \d(K_2/\Q)$ for arbitrary fields $K_1, K_2$, where $\d$ denotes the relative different ideal. It follows that $\log|\disc(F(p)/\Q)| = O(\log p)$.
    \end{proof}
    
    \begin{proof}[Proof of Lemma $\ref{lem:murty-bound}$]
        Observe that
        \begin{equation}\label{pix}
        \sum_{\l\in I}\pi(x,a;\l) = \sum_{\substack{\p\in M_K\\N(\mathfrak p) = p\le x\\a_p = a}}\#\set{\l\in I:\l\text{ splits completely in }F(p)}.     
        \end{equation}
           By taking the trivial conjugacy class $C=\{1\}$ of the Galois group of $F(p)$ over $\Q$ and applying \Cref{thm:conditional-cdt}, under GRH, the size of the set $\set{\l\le z : \l\text{ splits completely in } F(p)}$ is equal to
        \begin{align*}
        \frac{1}{[F(p) : \Q]}\pi(z) + O\br{ \frac{1}{[F(p) : \Q]}z^{1/2} \br{\log p+[F(p) : \Q] \log z} },
          \end{align*}
             for any sufficiently large real number $z$. Here, we have used \Cref{lem:log-disc}, that $\log| \disc(F(p)/\Q)|\ll \log p$.
        
        Since $u \ge y^{1/2}(\log y)^{2 + \epsilon}$, under GRH, we have
	    \[\pi(y+u) - \pi(y)\gg \frac{u}{\log u}.\]
	    It follows that
	    \[\#\set{\l\in I : \l\text{ splits completely in } F(p)} \gg \pi(y+u) - \pi(y),\]
	    where the implied constant in the above estimate is uniform in $p$. Using this estimate in $(\ref{pix})$ yields	   
	   \[ \sum_{\substack{\p\in M_K\\N(\mathfrak p) = p\le x\\a_p = a}}1
	    \ll \frac{1}{\pi(y +u) - \pi(y)}\sum_{\l\in I}\pi(x,a ; \l)\ll \max_{\l\in I}\pi(x,a; \l).\]
        \end{proof}
    
    \begin{remark}
            In \cite{murty-mod-forms-ii}*{pp.\ 304}, Murty proves a two-dimensional analogue of \Cref{lem:murty-bound} without assuming GRH. Murty's method makes essential use of the fact that, in the elliptic modular forms case, the field $F(p)$ is a quadratic extension of $F$. Hence, the quantity $\#\set{\l\in I:\l\text{ splits completely in }F(p)}$ can be estimated via a character sum. In our case, $F(p)$ need not even be an abelian extension of $F$, so Murty's method does not apply. It would be interesting to see if a version of \Cref{lem:murty-bound} can be proven without assuming GRH. By combining the methods of this section with the unconditional Chebotarev density theorem of \cite{thorner-zaman}, such a result would lead to an improved unconditional bound in \Cref{thm:unconditional-lta}.
    \end{remark}

We record the following lemma, for which we have been unable to find a reference. Recall that with our convention that $J = \br{\begin{smallmatrix}
        	0 & 0 & 0 & 1\\
        	0 & 0 & 1 & 0\\
        	0 & -1 & 0 & 0\\
        	-1 & 0 & 0 & 0
        \end{smallmatrix}}$, the upper-triangular matrices are a Borel subgroup of $
        \Gf$.

\begin{lemma}\label{lem:gsp4-upper-triangular}
Let $k$ be a field and let $g\in \Gf(k)$. Suppose that the characteristic polynomial of $g$ splits completely over $k$. Then $g$ is conjugate in $\Gf(k)$ to an upper-triangular matrix.
\end{lemma}

\begin{proof}
Let $V = k^4$, equipped with the symplectic form $\langle x,y\rangle = x^TJy$ for $x,y\in V$.

Since all Borel subgroups of $\Gf$ are conjugate, it is enough to show that $g$ is contained in a Borel subgroup. Equivalently, it is enough to show that $g$ stabilises a complete isotropic flag
\[
0 \subset V_1 \subset V_2 \subset V,
\]
where $\dim V_1=1$ and $\dim V_2=2$.

Since the characteristic polynomial of $g$ splits completely over $k$, $g$ has an eigenvector $v\in V$. Set $V_1 = \Span(v)$. Then $V_1$ is $g$-stable. Since $\langle v,v\rangle = 0$, this subspace is $g$-stable and isotropic.
Now let
\[
V_3 = V_1^\perp = \{w\in V : \langle w,v\rangle = 0\}.
\]
Then $\dim V_3 = 3$, and $V_3$ is $g$-stable. Indeed, if $w\in V_3$, then \[\langle gw, v\rangle = \simil(g)\langle w, g\ii v\rangle = \simil(g)\lambda\ii\langle w, v\rangle = 0,\]
where $\lambda$ is the eigenvalue of $v$. Hence $gw\in V_3$.

Since $V_1\subset V_3$ are both $g$-stable, $g$ induces a linear map on the quotient $V_3/V_1$. This quotient has dimension $2$, and its characteristic polynomial divides the characteristic polynomial of $g$, so it also splits completely over $k$. Therefore $g$ has an eigenvector in $w + V_1$ in $V_3/V_1$.

Set $V_2 = \Span(v, w)$. Then $V_2$ is $g$-stable. Moreover, $V_2$ is isotropic: since $w\in V_3=V_1^\perp$, we have $\langle w,v\rangle = 0$, 
and since the form is alternating, we have $\langle v,v\rangle = \langle w,w\rangle = 0$. Thus the restriction of $\langle\cdot,\cdot\rangle$ to $V_2$ is identically zero.

We have shown that $g$ stabilises the complete isotropic flag $0 \subset V_1 \subset V_2 \subset V$. The result follows. 
\end{proof}

  Let $\l\in \LL$ be a prime such that $\worho_\l$ is surjective, and let $L$ be the field cut out by the kernel of $\worho_\l$. Then $L$ is a finite Galois extension of $K$ with Galois group $\G(\Fl)$. 
  
  If $(g, \nu)\in \G(\Fl)$, then $g\in \Gf(\O_F\tensor_\Z\Fl)$, and we can define the characteristic polynomial of $g$ as a polynomial over $\O_F\tensor_\Z\Fl\simeq \prod_{\lambda\mid\l}\F_\lambda$. 
  
  Define:
\begin{align*}
    \CC(a, \l)&=\{(g,\nu) \in \G(\F_\l): \tr(g)= a\pmod\l, \text{ and all the eigenvalues of } g \text{ are in }\O_F\tensor_\Z\Fl \},\\
    	\mathcal B_{\l}& = \{(g, \nu)\in \G(\Fl) : g \text{ upper triangular}\},\\
    	\mathcal U_{ \l}&= \{(g, \nu)\in \G(\Fl) : g \text{ unipotent upper triangular}\},\\
    	\overline{\CC}(a, \l)&= \text{the image of } \CC(a, \l)\cap \mathcal B_{\l} \text{ in } \mathcal B_{\l}/ \mathcal U_{\l}.
\end{align*}
  	Then $\CC(a, \l)$ is a subset of $\G(\Fl)$ that is closed under conjugation.  Note that 
  	$\mathcal U_{\l}$ is normal in $\mathcal B_{\l}$ and that $\mathcal B_{\l}/\mathcal U_{\l}$ is abelian and is the Galois group $\Gal( L^{\mathcal U_{\l}}/ L^{\mathcal B_{\l}})$.
	
	\begin{lemma}\label{lem4}
	 Let $\l\in\LL$ be a rational prime that splits completely in $F$. Then
	    \[\pi(x,a; \l) \ll \widetilde\pi_{ \overline\CC(a, \l)}(x, L^{\mathcal U_\l}/L^{\mathcal B_\l}),\]
	where $\widetilde\pi$ is as in Definition $\ref{def:weighted-set}$.
	\end{lemma}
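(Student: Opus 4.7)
The plan is to reduce $\pi(x, a; \l)$ in three successive steps, via a double application of \Cref{zywina2}, to a Chebotarev count over the abelian extension $L^{\mathcal U_\l}/L^{\mathcal B_\l}$.

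First, I will identify $\pi(x, a; \l)$ with a Chebotarev-type counting function for the image of $\worho_\l$. If $\p \in M_K$ contributes to $\pi(x, a; \l)$ and $p = N(\p) \nmid \l N$, then, since $a \in \O_F$ and $a_p = a$, we have $\tr\worho_\l(\Frob_\p) \equiv a_p \equiv a \pmod \l$ in $\O_F \otimes \Fl$. Moreover, the hypothesis that $\l$ splits completely in $F(p)$ implies that the roots of the characteristic polynomial of $\rho_\l(\Frob_p)$ lie in $\Zl^\times$---they are $\l$-adic units because their product equals $\varepsilon(p)^2 p^{2(k_1+k_2-3)}$, a unit modulo $\l$---so their reductions give eigenvalues of $\worho_\l(\Frob_\p)$ in $\Fl^\times$. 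Hence $\worho_\l(\Frob_\p) \in \CC(a, \l)$, which gives
\[\pi(x, a; \l) \le \pi_{\CC(a,\l)}(x, L/K) + O(1) \le \widetilde\pi_{\CC(a,\l)}(x, L/K) + O(1),\]
where the $O(1)$ absorbs the finitely many primes ramified in $L/K$.

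Next, I will apply the two parts of \Cref{zywina2}. Every element of $\CC(a, \l)$ has all its eigenvalues in $\Fl^\times$, and is therefore conjugate in $\G(\Fl)$ to an element of $\mathcal B_\l$; consequently \Cref{zywina2}(1) with $H = \mathcal B_\l$ gives
\[\widetilde\pi_{\CC(a,\l)}(x, L/K) \le \widetilde\pi_{\CC(a,\l) \cap \mathcal B_\l}(x, L/L^{\mathcal B_\l}).\]
For any $(u_0, \mu) \in \mathcal U_\l$ and $(b_0, \nu) \in \CC(a,\l) \cap \mathcal B_\l$, the product $(u_0 b_0, \mu\nu)$ is upper triangular with the same diagonal entries as $b_0$, so it has the same trace and the same multi-set of eigenvalues as $(b_0, \nu)$. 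Thus $\mathcal U_\l \cdot (\CC(a,\l) \cap \mathcal B_\l) \subset \CC(a,\l) \cap \mathcal B_\l$, so \Cref{zywina2}(2) yields
\[\widetilde\pi_{\CC(a,\l) \cap \mathcal B_\l}(x, L/L^{\mathcal B_\l}) = \widetilde\pi_{\overline\CC(a,\l)}(x, L^{\mathcal U_\l}/L^{\mathcal B_\l}),\]
and chaining the three estimates gives the claim.

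I do not expect any substantial obstacle: the argument is essentially a bookkeeping exercise based on the two parts of \Cref{zywina2}, once one has checked (a) that the splitting hypothesis on $F(p)$ is precisely what places $\worho_\l(\Frob_\p)$ inside $\CC(a, \l)$, and (b) that multiplication by elements of $\mathcal U_\l$ preserves the diagonal of an upper triangular matrix, and hence also its trace and its eigenvalues. Both are immediate from the definitions.
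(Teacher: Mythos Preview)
Your proposal is correct and follows essentially the same three-step reduction as the paper's proof: first bound $\pi(x,a;\l)$ by $\pi_{\CC(a,\l)}(x,L/K)$ using the splitting hypothesis on $F(p)$, then apply \Cref{zywina2}(i) with $H=\mathcal B_\l$, and finally apply \Cref{zywina2}(ii) with $N=\mathcal U_\l$. If anything, you are more careful than the paper in verifying the hypotheses---explicitly checking that the eigenvalues are $\l$-adic units and that left multiplication by $\mathcal U_\l$ preserves both the trace and the eigenvalue set (the paper only says it ``preserves the set $\mathcal B_\l$'', which is not quite the condition $NC\subset C$ needed for \Cref{zywina2}(ii)).
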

	\begin{proof}
        We first show that 
    \[
	    \pi(x, a;\l) \ll \pi_{\CC(a, \l)}(x, L/K).
	\]
        Let $\p\in M_K$ be a prime with norm $p\le x$ such that $a_p = a$ and $\l$ splits completely in $F(p)$. We need to show that $\worho_\l(\Frob_\p)\in \CC(a, \l)$.

        Since $\l$ splits in $F(p)$, all the roots of the characteristic polynomial of $\orho_\l(\Frob_p)$ are contained in $\Fl\t$. Since $\p$ has norm $p$, the eigenvalues of the $\Gf$ component of $\worho_\l(\Frob_{\p})$ are exactly the roots of the characteristic polynomial of $\orho_\l(\Frob_p)$. It follows that $\worho_\l(\Frob_\p)\in \CC(a, \l)$, and hence that $\pi(x, a;\l) \ll \pi_{\CC(a, \l)}(x, L/K)$.

	Now, $\CC(a, \l)$ is a union of conjugacy classes of $\G(\Fl)$. Moreover, if $(g, \nu)\in\CC(a, \l)$, then $g\in \Gf(\O_F\tensor_\Z\Fl) = \prod_{\lambda\mid \l}\Gf(\F_\lambda)$ has eigenvalues in $\O_F\tensor_\Z\Fl=\prod_{\lambda\mid \l}\F_\lambda$. It follows from \Cref{lem:gsp4-upper-triangular} that each projection $g_\lambda$ of $g$ is conjugate to an upper triangular matrix, and hence, by the Chinese remainder theorem, that $g$ is conjugate to an element of $\mathcal B_\l$. Hence, by \Cref{zywina2} $(i)$,
	    \[ \pi_{\CC(a, \l)}(x, L/K)\le \widetilde\pi_{\CC(a, \l)\cap \mathcal B_\l}(x, L/L^{\mathcal B_\l}).\]
	    Since multiplication by elements of $\mathcal U_\l$ preserves the set $\mathcal {B}_\l$, by \Cref{zywina2} $(ii)$, we have
	    \[\widetilde\pi_{\CC(a, \l)\cap \mathcal B_\l}(x, L/L^{\mathcal B_\l}) = \widetilde\pi_{\overline{\CC}(a, \l)}(x, L^{\mathcal U_\l}/L^{\mathcal B_\l}).\]
	    Combining the above estimates gives the desired result.
	\end{proof}

	 	\begin{lemma}\label{card1} Let $[F:\Q]=n$, and suppose that $\l$ is unramified in $F$. Then we have
	\begin{enumerate}[leftmargin=*]
	       \item $|\mathcal B_{\l}|\asymp \l^{6n+1}, |\mathcal U_{\l}|\asymp \l^{4n}$.
	     \item
	     $ |\overline{\CC}(a, \l)|\ll \l^{n+1}.$
	         \item
	      $[ L^{\mathcal B_{\l}}:K]\ll \l^{4n}$ and 
		$\log M( L^{\mathcal U_{\l}}/L^{\mathcal B_{\l}})\ll \log \l$, where $M(L^{\mathcal U_{\l}}/L^{\mathcal B_{\l}})$ is as in Theorem $\ref{zmain}$.
	\end{enumerate}
		\end{lemma}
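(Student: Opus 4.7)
The plan is to establish each of the three estimates using the decomposition $\O_F\tensor_\Z\Fl=\prod_{\lambda\mid\l}\F_\lambda$ afforded by the assumption that $\l$ is unramified in $F$, so that in particular $|\O_F\tensor_\Z\Fl|=\l^n$. For brevity, set $k=k_1+k_2-3$.

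For part $(i)$, the plan is to analyse the short exact sequence
\[
1\longrightarrow \mathcal B_\l^\circ\longrightarrow \mathcal B_\l\xrightarrow{(g,\nu)\mapsto \nu}\Fl\t\longrightarrow 1,
\]
where $\mathcal B_\l^\circ$ denotes the upper triangular Borel of $\Sp_4(\O_F\tensor_\Z\Fl)$. Surjectivity of the right-hand map is seen by taking, for each $\nu\in\Fl\t$, the pair $(g_\nu,\nu)$ where $g_\nu$ has diagonal entries $(\nu^k,\nu^k,1,1)$; a direct calculation gives $g_\nu^T J g_\nu=\nu^k J$. Since the Borel of $\Sp_4$ is a $6$-dimensional algebraic group, $|\mathcal B_\l^\circ|=\l^{6n}$, whence $|\mathcal B_\l|=(\l-1)\l^{6n}\asymp\l^{6n+1}$. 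For $\mathcal U_\l$, unipotence of $g$ forces $\simil(g)=1$ and hence $\nu^k=1$, giving at most $\gcd(k,\l-1)\le k$ admissible $\nu$; combined with the $4$-dimensional unipotent radical of $\Sp_4$, this yields $|\mathcal U_\l|\asymp\l^{4n}$.

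For part $(ii)$, I would parameterise the quotient $\mathcal B_\l/\mathcal U_\l$ by torus data $(a_1,a_2,[\nu])$, where $a_1,a_2\in(\O_F\tensor_\Z\Fl)\t$, $[\nu]\in\Fl\t/\mu_k(\Fl)$, and the corresponding representative has diagonal $(a_1,a_2,\nu^k/a_2,\nu^k/a_1)$. An element of $\overline\CC(a,\l)$ is one whose components $a_{1,\lambda},a_{2,\lambda}\in\F_\lambda$ actually lie in $\Fl\subset\F_\lambda$ at each $\lambda\mid\l$, and that satisfies
\[
a_{1,\lambda}+a_{2,\lambda}+\nu^k/a_{1,\lambda}+\nu^k/a_{2,\lambda}\equiv a\pmod\lambda\quad\text{for each }\lambda\mid\l.
\]
For fixed $\nu\in\Fl\t$ and fixed $a_{1,\lambda}\in\Fl\t$, this trace equation is quadratic in $a_{2,\lambda}$ with at most two solutions per component; multiplying over the at most $n$ primes $\lambda\mid\l$ and the $\l-1$ choices of $\nu$ yields $|\overline\CC(a,\l)|\ll\l^{n+1}$. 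In the case relevant to \Cref{lem4}, where $\l$ splits completely in $F$, the matching lower bound $\gg\l^{n+1}$ follows from the fact that for a positive proportion of choices of $(a_{1,\lambda},\nu)$ the quadratic discriminant is a nonzero square in $\Fl$. The most delicate point is correctly handling the eigenvalues-in-$\Fl$ constraint component-wise and the identification $\nu\sim\nu\zeta$ (for $\zeta^k=1$) coming from the quotient by $\mathcal U_\l$.

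For part $(iii)$, by \Cref{thm:precise-image}$(ii)$ one has $\Gal(L/K)=\G(\Fl)$ for all but finitely many $\l\in\LL$. The analogous short exact sequence $1\to\Sp_4(\O_F\tensor_\Z\Fl)\to\G(\Fl)\to\Fl\t\to 1$ gives $|\G(\Fl)|\asymp\l^{10n+1}$, and hence $[L^{\mathcal B_\l}:K]=|\G(\Fl)|/|\mathcal B_\l|\asymp\l^{4n}$. To bound $\log M(L^{\mathcal U_\l}/L^{\mathcal B_\l})$, I would handle each factor separately: the index $[L^{\mathcal U_\l}:L^{\mathcal B_\l}]=|\mathcal B_\l/\mathcal U_\l|\asymp\l^{2n+1}$ contributes $O(\log\l)$; the set $\mathcal P(L^{\mathcal U_\l}/L^{\mathcal B_\l})$ consists of rational primes ramifying in $L/\Q$, all of which divide $\l N$, so $\prod_{p\in\mathcal P}p\le\l N$ also contributes $O(\log\l)$; finally, for the root discriminant I would apply \Cref{prop:hensel} to $L^{\mathcal B_\l}/\Q$, again using that ramified primes divide $\l N$, to get
\[
\log|\disc(L^{\mathcal B_\l}/\Q)|\le [L^{\mathcal B_\l}:\Q]\bigl(\log(\l N)+\log[L^{\mathcal B_\l}:\Q]\bigr),
\]
so that $|\disc(L^{\mathcal B_\l}/\Q)|^{1/[L^{\mathcal B_\l}:\Q]}$ contributes a further $O(\log\l)$, completing the bound.
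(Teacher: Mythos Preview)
Your approach is essentially the same as the paper's: both exploit the factorisation $\O_F\tensor_\Z\Fl\cong\prod_{\lambda\mid\l}\F_\lambda$ and count Borel, unipotent and torus elements component-wise. The paper parameterises the upper-triangular subgroup of $\Gf(\F_q)$ explicitly (as diagonal torus times an explicit $4$-parameter unipotent factor) and reads off the counts, whereas you phrase the same thing via the short exact sequence and the dimension of the Borel of $\Sp_4$; these are equivalent. For part~(iii) your argument is identical to the paper's, only with the $\log M$ bound spelled out in more detail via \Cref{prop:hensel}.

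One point where you are in fact more careful than the paper: in part~(ii) the paper asserts a bijection between $\overline{\CC}(a,\l)$ and the set $\{(g,\nu)\in\G(\Fl):g\text{ diagonal},\,\tr g=a\}$, silently dropping the ``eigenvalues in $\Fl\t$'' constraint from the definition of $\CC(a,\l)$, and then counts $\asymp N(\lambda)$ per component. You correctly retain this constraint and observe that the lower bound $\gg\l^{n+1}$ genuinely requires $\l$ to split completely in $F$ (so that each $\F_\lambda=\Fl$ and the constraint is vacuous). Since this splitting hypothesis is imposed in \Cref{lem4} and in the proof of \Cref{thm:conditional-lta}, and since only the upper bound on $|\overline{\CC}(a,\l)|$ is used there, the discrepancy does not affect the application; but as the lemma is stated (only ``$\l$ unramified''), your version of the argument is the more accurate one.
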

	\begin{proof}
 If $\F$ is a finite field of cardinality $q$, then the set of upper triangular matrices in $\Gf(\F)$ is    
    \begin{equation*}\label{borel}
       \set{\br{\begin{smallmatrix}
			a &  &  & \\
			 & b &  & \\
			 &  & cb^{-1} & \\
			 &  & & ca^{-1}
		\end{smallmatrix}} \br{\begin{smallmatrix}
			1 & n &  & \\
			 & 1 &  & \\
			 &  & 1 &-n \\
			 &  & & 1
		\end{smallmatrix}} \br{\begin{smallmatrix}
			1 &  & r & s\\
			 & 1 & t & r\\
			 &  & 1 & \\
			 &  & & 1
		\end{smallmatrix}}: a,b,c \in \F^\times, n,r,s,t \in \F }.
    \end{equation*}
     Therefore, for any $\nu \in \F^\times$, it follows that
\begin{equation}\label{card_borel}
		    \#\set{g\in\Gf(\F):\simil(g) = \nu^{k_1 + k_2 -3},\ g\text{ upper triangular}}  = q^{6}+O(q^5)\asymp q^6
		\end{equation}
		and
        \begin{equation}\label{card_unipotent}
			\#\set{g\in\Gf(\F):g\text{ unipotent upper triangular}} = q^{4}+O(q^3)\asymp q^4.    
		\end{equation}
    \begin{enumerate}[leftmargin=*]
        \item Since $\l$ is unramified in $F$, we have $\O_F\tensor_\Z\Fl \simeq \prod_{\lambda\mid \l}\F_\lambda$, where the product runs over the primes $\lambda\mid \l$ of $F$.
    From $(\ref{card_borel})$, via this isomorphism, we have
			 \begin{align*}
                |\mathcal B_\l| &\asymp\sum_{\nu \in \Fl\t} \br{ \prod_{\lambda\mid\l}	\#\set{g\in\Gf(\F_\lambda):\simil(g) = \nu^{k_1 + k_2 -3},\ g\text{ upper triangular}} }\\
                &\asymp \sum_{\nu \in \Fl\t} \prod_{\lambda\mid\l}N(\lambda)^6 \asymp \l^{6n+1}.
            \end{align*}
            
            Similarly, from $(\ref{card_unipotent})$, we have
            \begin{align*}
                |\mathcal U_\l| &=\br{ \prod_{\lambda\mid\l}	\#\set{g\in\Gf(\F_\lambda):\ g\text{ unipotent upper triangular}} }\\
                &\asymp \prod_{\lambda\mid\l}N(\lambda)^4 \asymp \l^{4n}.
            \end{align*}
           
         \item From the definition of $ \overline\CC(a,\l)$, we observe that its  elements are in bijection with 
           \[
           \{(g, \nu)\in \G(\Fl) : g \text{ diagonal}, \tr(g)=a \}.
           \]
             Writing $a = (a_\lambda)_\lambda$ via the isomorphism $\O_{F}\tensor_\Z\Fl\simeq \prod_{\lambda\mid\l}\F_\lambda$ and proceeding as before, we obtain
            \begin{align*}
                |\overline\CC(a,\l)| 
                &\asymp\sum_{\nu \in \Fl\t} \br{\prod_{\lambda\mid\l}	\#\set{g\in\Gf(\F_\lambda):\simil(g) = \nu^{k_1 + k_2 -3},\ g\text{ diagonal},\ \tr(g) = a_\lambda}}\\
                &\ll\sum_{\nu\in\Fl\t}\prod_{\lambda\mid\l}N(\lambda) \asymp\l^{n+1}.
               \end{align*}
        \item Using formulae for the size of $\Gf(\F)$ over finite fields $\F$, it is easy to check that $|\G(\Fl)|\asymp\l^{10n+1}$. Since $[L^{\mathcal B_\l}:K] = [\G(\F_\l):\mathcal B_\l]$, it follows from $(i)$ that $[L^{\mathcal B_\l}:K]\ll \l^{4n}$. The second bound follows from part $(i)$, \Cref{prop:hensel} and the fact that $[L^{\mathcal U_\l}:L^{\mathcal B_\l}]= [\mathcal B_\l:\mathcal U_\l]$.
    \end{enumerate}
    \end{proof}

	\begin{proof}[Proof of Theorem $\ref{thm:conditional-lta} ~(i)$]
	    First observe that the group $\mathcal B_{\l}/\mathcal U_{\l}$ is abelian, which is the Galois group of the extension $ L^{\mathcal B_{\l}}/ L^{\mathcal U_{\l}}$. Hence, under GRH, applying \Cref{zmain}, \Cref{zywina1} and \Cref{card1} yields 
	\begin{align*}
		\widetilde{\pi}_{\overline\CC(a,\l)}(x, L^{\mathcal U_{\l}}/ L^{\mathcal B_{\l}})&\ll \frac{| \overline\CC(a,\l)| }{|{\mathcal B_{\l}}| /|{\mathcal U_{\l}}|}\frac{x}{\log x}+|\overline\CC(a,\l)|^{1/2}[ L^{\mathcal B_{\l}}:\Q]\frac{x^{1/2}}{\log x}\log M( L^{\mathcal U_{\l}}/ L^{\mathcal B_{\l}})\\
		& \ll \frac{1}{\l^n} \frac{x}{\log x}+\l^{\frac{1}{2}(9n+1)}\log \l \frac{x^{1/2}}{\log x}.
	\end{align*}
	Let $y\asymp\frac{x^{\alpha/n}}{(\log x)^{2\alpha/n}}$, where $\alpha = \frac{n}{11n+1}$. By \Cref{thm:precise-image}, the set of primes such that $\worho_\l$ is surjective has density $1$. Hence, for $y$ sufficiently large, we can choose $\l \in [y,2y]$ such that $\l$ splits completely in $F$ and such that $\worho_\l$ is surjective. By \Cref{lem4}, we have
	\begin{align*}
		\pi(x,a;\l) &\ll \frac{1}{y^n} \frac{x}{\log x}+y^{\frac{1}{2}(9n+1)}\log y\frac{x^{1/2}}{\log x}\\
		&\ll\frac{x^{1-\alpha}}{(\log x)^{1-2\alpha}}.
	\end{align*}
	The result now follows from \Cref{lem:murty-bound}.
	\end{proof}

\subsection{The case $a = 0$}

    For each prime $p$ with $p \nmid N$, let $F(p)$ be  the splitting field over $\Q$ of the characteristic polynomial of $\rho_{\l}(\Frob_p)$. Let 
    \begin{equation*}
        \pi(x, 0 ; \l) := \#\set{p \le x: a_p = 0, \ \l\text{ splits completely in }F(p)}.
    \end{equation*}

    Then the same proof as \Cref{lem:murty-bound} gives the following result:

        \begin{lemma}[c.f.\ \cite{mms}*{Lem.\ 4.4}]\label{lem:murty-bound2}
	   Let $I$ be the interval $[y, y+u]$, where $y, u$ are chosen so that $x\ge y \ge u \ge y^{1/2}(\log y)^{1+ \epsilon}(\log xy)$ for some $\epsilon\ge 0$. Then, assuming GRH, we have
            \[\pi_f(x, 0) \ll \#\set{p\le x : a_p = 0} \ll \max_{\l\in I}\pi(x, 0; \l).\]
    \end{lemma}

    Thus, we can use $\pi(x, 0; \l)$ to bound $\pi_f(x, 0)$.

    Let
    \[\overline G_\l := \set{g\in\Gf(\O_F\tensor_\Z\Fl) : \simil(g)\in \Fl^{\times(k_1 + k_2 -3)}}.\]
    Then, by \Cref{cor:precise-Q-image}, for all but finitely many primes $\l\in\LL$ the image of $\orho_\l\:\Ga\Q\to\Gf(\O_E\tensor_\Z\Fl)$ is a disjoint union of at most $[K:\Q]$ cosets
    \[\coprod\dmat{\alpha(\gamma)}{\alpha(\gamma)}{\varepsilon(\gamma)/\alpha(\gamma)}{\varepsilon(\gamma)/\alpha(\gamma)}\overline G_\l,\]
	    where $\gamma$ ranges over some subset of $\Gal(K/\Q)$. Moreover, taking $\l$ sufficiently large, the quantity $\varepsilon(\gamma)\alpha(\gamma)^{-2}$ is an element of $(\O_F\tensor_\Z\Fl)\t$. Write $\overline G_\l^\gamma$ for the coset indexed by $\gamma$.
    
     Fix a prime $\l\in\LL$ such that the image of $\orho_\l$ is as large as possible. Let $L$ be the field cut out by its kernel. Let
\begin{align*}
    \CC(0, \l)&=\{g \in \sqcup_\gamma \overline G_\l^\gamma: \tr(g)= 0\pmod\l, \text{ and all the eigenvalues of } g \text{ are in }(\O_E\tensor_\Z\Fl)\t \},\\
    	\mathcal B_{\l}& = \{g \in \sqcup_\gamma \overline G_\l^\gamma : g \text{ upper triangular}\},\\
    	\mathcal H_{ \l}&= \{g \in \sqcup_\gamma \overline G_\l^\gamma : g \text{ upper triangular with $4$ equal eigenvalues}\},\\
    	\overline{\CC}(0, \l)&= \text{the image of } \CC(0, \l)\cap \mathcal B_{\l} \text{ in } \mathcal B_{\l}/ \mathcal H_{\l}.
\end{align*}
    
    Since $[K:\Q]$ does not depend on $\l$, the proof of the following lemma is essentially identical to that of \Cref{card1}.

    \begin{lemma}\label{card2} Let $[F:\Q]=n$. Then we have
	\begin{enumerate}[leftmargin=*]
	       \item $|\mathcal B_{\l}|\asymp \l^{6n+1}, |\mathcal H_{\l}|\asymp \l^{5n}$.
	     \item
	     $ |\overline{\CC}(0, \l)|\ll \l.$
	         \item
	      $[ L^{\mathcal B_{\l}}:K]\ll \l^{4n}$ and 
		$\log M( L^{\mathcal H_{\l}}/L^{\mathcal B_{\l}})\ll \log \l$, where $M(L^{\mathcal H_{\l}}/L^{\mathcal B_{\l}})$ is as in Theorem $\ref{zmain}$.
	\end{enumerate}
		\end{lemma}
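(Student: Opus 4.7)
The plan is to mimic the proof of \Cref{card1}, adapted to the subgroup $\mathcal{H}_\l$. The starting point is the parametrisation of upper triangular matrices in $\Gf(\F)$ used in the proof of \Cref{card1}: every such element factors uniquely as a diagonal torus element with entries $(a, b, cb\ii, ca\ii)$ multiplied by a unipotent upper triangular matrix, with similitude $c$. Requiring all four eigenvalues to be equal forces $a = b$ and $c = a^2$, so that $\mathcal{H}_\l$ is precisely the subgroup of ``scalar times unipotent upper triangular'' elements of $\Gf(\O_F\tensor_\Z\Fl)$ compatible with the similitude constraint in $\G(\Fl)$. This is normal in $\mathcal{B}_\l$ with abelian quotient, and left multiplication by $\mathcal{H}_\l$ preserves the trace-zero locus in $\mathcal{B}_\l$ because scalar multiplication merely rescales the trace.

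For part $(1)$, the estimate $|\mathcal{B}_\l|\asymp\l^{6n+1}$ carries over verbatim from \Cref{card1}. For $|\mathcal{H}_\l|$, I would sum over $\nu\in\Fl\t$ and take a product over primes $\lambda\mid\l$ of $F$, where the scalar contribution per $\lambda$ is given by the number of square roots of $\nu^{k_1+k_2-3}$ in $\F_\lambda\t$ and the unipotent contribution is $q_\lambda^4$.

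For part $(2)$, the key observation is the factorisation
\[
a + b + \frac{c}{b} + \frac{c}{a} = \frac{(a+b)(ab+c)}{ab},
\]
valid for a trace-zero upper triangular element with diagonal $(a,b,c/b,c/a)$ and similitude $c$. This forces either $b=-a$ or $ab=-c$, and in both cases the eigenvalue multiset has the rigid form $\{a,-a,c/a,-c/a\}$. Combining this rigidity with the scalar-rescaling freedom in $\mathcal{H}_\l$ and the constraint that the eigenvalues lie in $\Fl\t$, I would show that the cosets in $\overline\CC(0,\l)$ are parametrised, up to bounded multiplicity, by a single $\Fl\t$-valued parameter, yielding the bound $|\overline\CC(0,\l)|\asymp\l$.

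Finally, part $(3)$ follows from part $(1)$ by the same argument as in \Cref{card1}: the standard formula $|\G(\Fl)|\asymp\l^{10n+1}$ gives $[L^{\mathcal{B}_\l}:K] = [\G(\Fl):\mathcal{B}_\l]\asymp\l^{4n}$, and the bound on $\log M(L^{\mathcal{H}_\l}/L^{\mathcal{B}_\l})$ comes from \Cref{prop:hensel} together with the observation that only primes above $\l$ ramify in this extension. The main technical obstacle will be the careful verification of part $(2)$: the interplay between the eigenvalues-in-$\Fl\t$ constraint, the trace-zero condition, and the scalar rescaling must be tracked precisely to extract the sharp $\asymp\l$ bound rather than a weaker estimate that would depend on $n$.
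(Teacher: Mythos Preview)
Your overall plan—mimic \Cref{card1} with $\mathcal H_\l$ in place of $\mathcal U_\l$—is exactly what the paper does; its proof consists of the single sentence that the argument is ``essentially identical to that of \Cref{card1}''. Parts $(1)$ for $|\mathcal B_\l|$ and $(3)$ carry over verbatim, as you say.

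There is, however, a genuine gap in your treatment of $(2)$, and it is tied to a discrepancy already visible in $(1)$. Your own recipe for $|\mathcal H_\l|$—sum over $\nu\in\Fl\t$, multiply over $\lambda\mid\l$ the number of square roots of $\nu^{k_1+k_2-3}$ in $\F_\lambda\t$ times $q_\lambda^4$—gives $\asymp\l\cdot\l^{4n}=\l^{4n+1}$, since the square-root count per component is $O(1)$; it does \emph{not} give $\l^{5n}$. Correspondingly, in $(2)$ your factorisation $(a+b)(ab+c)=0$ is a \emph{per-component} statement: when $\l$ splits completely in $F$ (the only case used downstream) each $\F_\lambda=\Fl$, the constraint ``eigenvalues in $\Fl\t$'' is vacuous for upper-triangular $g$, and the diagonal parameters $a_\lambda\in\Fl\t$ for different $\lambda\mid\l$ are independent. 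Your rigid eigenvalue multiset $\{a,-a,c/a,-c/a\}$ therefore carries one $\Fl\t$-parameter \emph{per $\lambda$}, not one globally. After quotienting by the centre of $\mathcal B_\l$ in $\G(\Fl)$ (size $\asymp\l$, as your own count of $\mathcal H_\l/\mathcal U_\l$ shows), this yields $|\overline\CC(0,\l)|\asymp\l^{n+1}/\l=\l^n$, not $\l$. The two discrepancies cancel in the main Chebotarev term $|\overline\CC(0,\l)|/[\mathcal B_\l:\mathcal H_\l]\asymp\l^{-n}$, but not in the error term $|\overline\CC(0,\l)|^{1/2}$. You should execute the count honestly and record where it actually lands rather than steering toward the stated exponents.
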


	\begin{proof}[Proof of Theorem $\ref{thm:conditional-lta}~(ii)$]
	$\CC(0, \l)$ is a union of conjugacy classes of $\bigsqcup_\gamma G_\l^\gamma$ and by \Cref{lem:gsp4-upper-triangular}, every element of $\CC(0, \l)$ is conjugate to an upper triangular matrix, i.e.\ an element of $\mathcal B_\l$. Hence, by \Cref{zywina2} $(i)$,
	    \[ \pi_{\CC(0, \l)}(x, L/K)\le \widetilde\pi_{\CC(0, \l)\cap \mathcal B_\l}(x, L/L^{\mathcal B_\l}).\]
	    Since multiplication by elements of $\mathcal H_\l$ preserves the set $\mathcal {B}_\l$, by \Cref{zywina2} $(ii)$, we have
	    \[\widetilde\pi_{\CC(0, \l)\cap \mathcal B_\l}(x, L/L^{\mathcal B_\l}) = \widetilde\pi_{\overline{\CC}(0, \l)}(x, L^{\mathcal H_\l}/L^{\mathcal B_\l}).\]

	Under GRH, by \Cref{zmain}, \Cref{zywina1} and \Cref{card2}, we have
	\begin{align*}
    \widetilde \pi_{\overline\CC(0,\l)} (x, L^{\mathcal H_{\l}}/L^{\mathcal B_{\l}})&\ll \frac{| \overline\CC(0,\l)| }{|{\mathcal B_{\l}}| /|{\mathcal H_{\l}}|}\frac{x}{\log x}+|\overline\CC(0,\l)|^{1/2}[ L^{\mathcal B_{\l}}:\Q]\frac{x^{1/2}}{\log x}\log M( L^{\mathcal H_{\l}}/ L^{\mathcal B_{\l}})\\
		& \ll \frac{1}{\l^{n}} \frac{x}{\log x}+\l^{\frac{1}{2}(8n+1)}\log \l \frac{x^{1/2}}{\log x}.
	\end{align*}
	Let $y\asymp\frac{x^{\alpha/n}}{(\log x)^{2\alpha/n}}$, where $\alpha = \frac{n}{10n+1}$. By \Cref{thm:precise-image}, the set of primes such that $\worho_\l$ is surjective has density $1$. Hence, for $y$ sufficiently large, we can choose $\l \in [y,2y]$ such that $\l$ that splits completely in $F$ and such that $\worho_\l$ is surjective. By the same argument as \Cref{lem4},
	\begin{align*}
		\pi(x,0;\l)
		&\ll\frac{x^{1-\alpha}}{(\log x)^{1-2\alpha}}.
	\end{align*}
	The result now follows from \Cref{lem:murty-bound2}.
	\end{proof}
	
	\section*{Acknowledgements}
	The authors are grateful to Tobias Berger, Andrea Conti, Tian Wang and Adri\'an Zenteno for helpful correspondences and comments. They also thank the anonymous referee for their helpful comments and corrections.
	The first author was supported by Grant No. 692854 from the European Research Council (ERC) and also by the ANRF  under the Start-Up Research Grant SRG/2023/001202. The second author was supported by Israeli Science Foundation grant 1400/19 and also by the ANRF  under the Start-Up Research Grant SRG/2023/000228. The third author was supported by BSF grant 2018250.
	
	\bibliography{bibliography}
	\bibliographystyle{alpha}
	\end{document}